\newcommand{\E}{\mathbb{E}}
\newcommand{\R}{\mathbb{R}}
\newcommand{\N}{\mathbb{N}}
\newtheorem{theo}{Theorem}[section]
\newtheorem{propo}[theo]{Proposition}
\newtheorem{lemma}[theo]{Lemma}
\newtheorem{ass}{Assumption}
\begin{document}

\title{Uniform strong and weak error estimates for numerical schemes applied to multiscale SDEs in a Smoluchowski--Kramers diffusion approximation regime}

\author{Charles-Edouard Br\'ehier}
\address{Univ Lyon, Université Claude Bernard Lyon 1, CNRS UMR 5208, Institut Camille Jordan, 43 blvd. du 11 novembre 1918, F--69622 Villeurbanne cedex, France}
\email{brehier@math.univ-lyon1.fr}

\begin{abstract}
We study a family of numerical schemes applied to a class of multiscale systems of stochastic differential equations. When the time scale separation parameter vanishes, a well-known Smoluchowski--Kramers diffusion approximation result states that the slow component of the considered system converges to the solution of a standard It\^o stochastic differential equation. We propose and analyse schemes for strong and weak effective approximation of the slow component. Such schemes satisfy an asymptotic preserving property and generalize the methods proposed in the recent article~\cite{BR}. We fill a gap in the analysis of these schemes and prove strong and weak error estimates, which are uniform with respect to the time scale separation parameter.
\end{abstract}

\maketitle

\section{Introduction}

In this article, we consider multiscale systems of stochastic differential equations of the type
\begin{equation}\label{eq:SDEintro}
\left\lbrace
\begin{aligned}
dq^\epsilon(t)&=\frac{p^\epsilon(t)}{\epsilon}dt\\
dp^\epsilon(t)&=-\frac{p^\epsilon(t)}{\epsilon^2}dt+\frac{f(q^\epsilon(t))}{\epsilon}dt+\frac{\sigma(q^\epsilon(t))}{\epsilon}d\beta(t),
\end{aligned}
\right.
\end{equation}
where $\epsilon\in(0,\epsilon_0)$ is a time-scale separation parameter. The unknowns $q^\epsilon(t)$ and $p^\epsilon(t)$ and the Wiener process $\beta$ take values in $\R^d$, and the mapping $f$ and $\sigma$ satisfy appropriate regularity conditions. The objective of this article is to study numerical schemes for the approximation of the component $Q^\epsilon$, for arbitrary values of the time-scale separation parameter $\epsilon$, in particular when it vanishes. This is not a trivial task since the component $p^\epsilon$ evolves at the fast time scale $t/\epsilon^2$, and a crude discretization would impose stringent conditions on the time-step size $\Delta t$.

It is a well-known result in the analysis of multiscale stochastic systems that $q^\epsilon(t)$ converges, at least in distribution, when $\epsilon\to 0$, to $q^0(t)$, for all $t\ge 0$, where $X^0$ is the solution of the stochastic differential equation
\begin{equation}\label{eq:limitingSDEintro}
dq^0(t)=f(q^0(t))dt+\sigma(q^0(t))d\beta(t)
\end{equation}
where the noise is interpreted in the sense of It\^o. We refer for instance to~\cite[Chapter~11]{PavliotisStuart} for a description of this thype of convergence result, and see Proposition~\ref{propo:cv} below for a precise statement, where convergence is understood in a stronger sense than convergence in distribution. The convergence result $q^\epsilon\to q^0$ is often called a Smoluchowski--Kramers diffusion approximation result in the literature. If $\sigma$ is constant and equal to the identity, and if $f=-\nabla V$ for some potential energy function $V:\R^d\to\R$, the SDE system~\eqref{eq:SDEintro} describes the Langevin dynamics, whereas the SDE~\eqref{eq:limitingSDEintro} describes the overdamped Langevin dynamics, see for instance~\cite[Sections~2.2.3 and~2.2.4]{LelievreRoussetStoltz}, and also the recent article~\cite{RoussetXuZitt} and references therein.

In order to define numerical schemes which perform better than crude methods when $\epsilon$ varies and may vanish, it is relevant to resort to the notion of asymptotic preserving schemes as studied in the recent article~\cite{BR}: if $\Delta t=T/N$ denotes the time-step size with given $T\in(0,\infty)$ and $N\in\N$, one has a commutative diagram property
\[
\begin{CD}
q_N^{\epsilon,\Delta t}     @>{N \to \infty}>> q^\epsilon(T) \\
@VV{\epsilon\to 0}V        @VV{\epsilon\to 0}V\\
q_N^{0,\Delta t}     @>{N \to \infty}>> q^0(T),
\end{CD}
\]
where $\bigl(q_n^{\epsilon,\Delta t},p_n^{\epsilon,\Delta t}\bigr)_{0\le n\le N}$ is the scheme for given values of $\epsilon$ and $\Delta t$, and one needs to check that
\begin{itemize}
\item the scheme is consistent for any value of $\epsilon>0$ when $\Delta t\to 0$,
\item there exists a limiting scheme $\bigl(q_n^{0,\Delta t}\bigr)_{0\le n\le N}$ when $\epsilon\to 0$ for any value of $\Delta t>0$,
\item the limiting scheme is consistent with the limiting equation when $\Delta t\to 0$.
\end{itemize}
As explained in~\cite{BR}, the last property may fail to hold for some crude methods. However in the situation considered in this article there are no such subtleties for instance related to the interpretation of the noise.

In this article, we study numerical schemes and obtain strong and weak error estimates which are uniform with respect to the time-scale separation parameter $\epsilon\in(0,\epsilon_0)$, meaning that $q^\epsilon(T)$ can be approximated by $q_N^{\epsilon,\Delta t}$ with a cost which is independent of $\epsilon$. On the one hand, for the numerical scheme
\begin{equation}\label{eq:scheme-strong-intro}
\left\lbrace
\begin{aligned}
q_{n+1}^{\epsilon,\Delta t}&=q_n^{\epsilon,\Delta t}+\frac{\Delta t}{\epsilon}p_{n+1}^{\epsilon,\Delta t}\\
p_{n+1}^{\epsilon,\Delta t}&=p_n^{\epsilon,\Delta t}-\frac{\Delta t}{\epsilon^2}p_{n+1}^{\epsilon,\Delta t}+\frac{\Delta tf(q_n^{\epsilon,\Delta t})}{\epsilon}+\frac{\sigma(q_n^{\epsilon,\Delta t})}{\epsilon}\Delta\beta_n,
\end{aligned}
\right.
\end{equation}
one obtains uniform strong error estimates
\begin{equation}\label{eq:strong-intro}
\underset{\epsilon\in(0,\epsilon_0)}\sup~\bigl(\E[|q_N^{\epsilon,\Delta t}-q^\epsilon(N\Delta t)|^2]\bigr)^{\frac12}\le C(T)\Delta t,
\end{equation}
see Theorem~\ref{theo:strong} for a precise statement. On the other hand, for the numerical scheme
\begin{equation}\label{eq:scheme-weak-intro}
\left\lbrace
\begin{aligned}
q_{n+1}^{\epsilon,\Delta t}&=q_n^{\epsilon,\Delta t}+\epsilon\bigl(1-e^{-\frac{\Delta t}{\epsilon^2}}\bigr)p_n^{\epsilon,\Delta t}+\bigl(\Delta t-\epsilon^2(1-e^{-\frac{\Delta t}{\epsilon^2}})\bigr)f(q_n^{\epsilon,\Delta t})\\
&+\sigma(q_n^{\epsilon,\Delta t})(\beta(t_{n+1})-\beta(t_n))-\sigma(q_n^{\epsilon,\Delta t})\int_{t_n}^{t_{n+1}}e^{-\frac{t_{n+1}-s}{\epsilon^2}}d\beta(s)\\
p_{n+1}^{\epsilon,\Delta t}&=e^{-\frac{\Delta t}{\epsilon^2}}p_n^{\epsilon,\Delta t}+\epsilon(1-e^{-\frac{\Delta t}{\epsilon^2}})f(q_n^{\epsilon,\Delta t})+\frac{1}{\epsilon}\sigma(q_n^{\epsilon,\Delta t})\int_{t_n}^{t_{n+1}}e^{-\frac{t_{n+1}-s}{\epsilon^2}}d\beta(s),
\end{aligned}
\right.
\end{equation}
and functions $\varphi:\R^d\to\R$ of class $\mathcal{C}^3$, one obtains weak error estimates
\begin{align}
\big|\E[\varphi(q_N^{\epsilon,\Delta t})]-\E[\varphi(q^\epsilon(T))]\big|&\le C(T,\varphi)\bigl(\Delta t+\epsilon\bigr),\label{eq:weak-intro1}\\
\underset{\epsilon\in(0,\epsilon_0)}\sup~\big|\E[\varphi(q_N^{\epsilon,\Delta t})]-\E[\varphi(q^\epsilon(T))]\big|&\le C(T,\varphi)\Delta t^{\frac12}.\label{eq:weak-intro2}
\end{align}
See Theorem~\ref{theo:weak} for a precise statement, and Proposition~\ref{propo:SDEtilde} for explanations concerning the construction of the numerical scheme~\eqref{eq:scheme-weak-intro}.

The uniform weak error estimate~\eqref{eq:weak-intro2} may be suboptimal: indeed one only obtains order $1/2$, and therefore~\eqref{eq:weak-intro2} is a straightforward corollary of the uniform strong error estimate~\eqref{eq:strong-intro}. The weak error estimate~\eqref{eq:weak-intro1} is not uniform with respect to $\epsilon$ but may be more precise in regimes where $\epsilon$ is negligible compared with $\Delta t$. The weak error estimate~\eqref{eq:weak-intro1} is also similar to error bounds which may be obtained for different multiscale numerical schemes, for instance based on the Heterogeneous Multiscale Method. Improving the uniform weak error estimate~\eqref{eq:weak-intro2} to obtain order $1$ is left for future works.

To the best of our knowledge, the strong error estimates~\eqref{eq:strong-intro} and the weak error estimates~\eqref{eq:weak-intro1}--\eqref{eq:weak-intro2} have not been obtained previously and our results thus fill a gap in the literature. These proofs require delicate and non trivial arguments. On the one hand, proving~\eqref{eq:strong-intro} is based on an appropriate change of unknowns and analysis of multiple error terms. On the other hand, proving~\eqref{eq:weak-intro1}--\eqref{eq:weak-intro2} is based on the standard approach using solutions of Kolmogorov equations for weak error analysis. Proposition~\ref{propo:Kolmogorov} gives the required bounds on the derivatives of these solutions, with a careful analysis of the dependence with respect to $\epsilon$.

Note that the recent preprint~\cite{preprintWZ} is also concerned with the proof of uniform (strong) error estimates for a class of multiscale SDE systems in a diffusion approximation regime. However, the structure of the systems, the results and the techniques of proof are substantially different, which justifies to perform the analysis in separate articles.

The analysis of numerical methods for multiscale stochastic differential equations is an active research area. The recent articles~\cite{ACLM} and~\cite{Laurent} propose uniformly accurate methods for SDE systems which are different from~\eqref{eq:SDEintro} considered in this article. The recent article~\cite{BR} has introduced a notion of asymptotic preserving schemes which applies to~\eqref{eq:SDEintro}, and some uniform error estimates were proved for SDE systems in an averaging regime. We also refer to the PhD thesis~\cite{RR-thesis} for supplementary results and numerical experiments. In this article, as already mentioned, we fill a gap in~\cite{BR} and prove some uniform error estimates in the diffusion approximation regime, for the schemes~\eqref{eq:scheme-strong-intro} and~\eqref{eq:scheme-weak-intro} applied to~\eqref{eq:SDEintro}. The articles~\cite{FrankGottwald:18} and~\cite{LiAbdulleE:08} illustrate why effective numerical approximation of solutions of SDEs may be more subtle than for deterministic problems. Many other techniques have been introduced to design effective methods for the numerical approximation of multiscale SDE systems, let us mention spectral methods~\cite{AbdullePavliotisVaes:17}, heterogeneous multiscale methods~\cite{ELiuVandenEijnden:05}, projective integration methods~\cite{GivonKevrekidisKupferman:06}, equation-free methods~\cite{KevrekidisAl:03}, parareal methods~\cite{LegollLelievreMyerscoughSamaey:20}, micro-macro acceleration methods~\cite{VandecasteeleZielinskiSamaey:20} for instance. We refer to the monographs~\cite{Gobet,KloedenPlaten,MilsteinTretyakov,
Pages} for general results on numerical methods applied to stochastic differential equations.

This article is organized as follows. Section~\ref{sec:setting} describes the setting, in particular the considered multiscale SDE systems are presented in Section~\ref{sec:setting-SDE} and the numerical schemes studied in this work are given in Section~\ref{sec:setting-schemes}. The main results of this article are stated and discussed in Section~\ref{sec:main}: uniform strong error estimates are given in Theorem~\ref{theo:strong} (Section~\ref{sec:main-strong}) and weak error estimates are given in Theorem~\ref{theo:weak} (Section~\ref{sec:main-weak}). Moment bounds are stated and proved in Section~\ref{sec:moments}. Theorem~\ref{theo:strong} is proved in Section~\ref{sec:proofstrong} whereas Theorem~\ref{theo:weak} is proved in Section~\ref{sec:proofweak}. Auxiliary regularity results for solutions of Kolmogorov equations, with a careful analysis of the dependence with respect to $\epsilon$, are stated in Section~\ref{sec:Kolmogorov} and proved in Section~\ref{sec:proofKolmogorov}.

\section{Setting}\label{sec:setting}

Let $d\in\N$ be an integer. The norm and inner product in the standard Euclidian space $\R^d$ are denoted by $|\cdot|$ and $\langle \cdot,\cdot\rangle$ respectively. The set of $d\times d$ matrices with real-valued entries is denoted by $\mathcal{M}_d(\R)$. The same notation is used to denote the norm and inner product in the space $\R^{2d}$. Let $\bigl(\beta(t)\bigr)_{t\ge 0}$ be a $\R^d$-valued standard Wiener process, defined on a probability space $(\Omega,\mathcal{F},\mathbb{P})$ which satisfies the usual conditions. The expectation operator is denoted by $\E[\cdot]$.

The time-scale separation parameter is denoted by $\epsilon$. Without loss of generality, it is assumed that $\epsilon\in(0,\epsilon_0)$, where $\epsilon_0$ is an arbitrary positive parameter. The time-step size of the numerical schemes is denoted by $\Delta t$. It is assumed that $\Delta t=T/N$ where $T\in(0,\infty)$ is an arbitrary positive real number, and $N\in\N$ is an integer. For all $n\in\{0,\ldots,N\}$, let $t_n=n\Delta t$. Without loss of generality, it is assumed that $\Delta t\in(0,\Delta t_0)$, where $\Delta t_0=T/N_0$ is an arbitrary positive real number. Equivalently, it is assumed that $N\ge N_0$. For all $n\in\{0,\ldots,N-1\}$, set $\Delta\beta_n=\beta(t_{n+1})-\beta(t_n)$.

If $\varphi:\R^d\to\R$ is a mapping of class $\mathcal{C}^3$, its first, second and third order derivatives are denoted by $\nabla\varphi$, $\nabla^2\varphi$ and $\nabla^3\varphi$ respectively. Set
\begin{align*}
\vvvert\varphi\vvvert_1&=\underset{x\in\R^d}\sup~\underset{h^1\in\R^d}\sup~|\nabla \varphi(x).h^1|,\\
\vvvert\varphi\vvvert_2&=\vvvert\varphi\vvvert_1+\underset{x\in\R^d}\sup~\underset{h^1,h^2\in\R^d}\sup~|\nabla^2\varphi(x).(h^1,h^2)|,\\
\vvvert\varphi\vvvert_3&=\vvvert\varphi\vvvert_2+\underset{x\in\R^d}\sup~\underset{h^1,h^2,h^3\in\R^d}\sup~|\nabla^3\varphi(x).(h^1,h^2,h^3)|.
\end{align*}
If $\phi:(q,p)\in\R^d\times\R^d\to\R$ is of class $\mathcal{C}^1$, $\nabla_q\phi$ and $\nabla_p\phi$ are the partial derivatives of $\phi$ with respect to the variables $q$ and $p$ respectively. Similar notation is used for higher order derivatives.

\subsection{The multiscale SDE system}\label{sec:setting-SDE}

We consider the following class of multiscale SDE systems
\begin{equation}\label{eq:SDE}
\left\lbrace
\begin{aligned}
dq^\epsilon(t)&=\frac{p^\epsilon(t)}{\epsilon}dt\\
dp^\epsilon(t)&=-\frac{p^\epsilon(t)}{\epsilon^2}dt+\frac{f(q^\epsilon(t))}{\epsilon}dt+\frac{\sigma(q^\epsilon(t))}{\epsilon}d\beta(t),
\end{aligned}
\right.
\end{equation}
where $q^\epsilon(t)\in\R^d$ and $p^\epsilon(t)\in\R^d$ for all $t\ge 0$. The mappings $f:\R^d\to\R^d$ and $\sigma:\R^d\to \mathcal{M}_d(\R)$ satisfy Assumption~\ref{ass:f-sigma} below.
\begin{ass}\label{ass:f-sigma}
Let $f:\R^d\to\R^d$ and $\sigma:\R^d\to \mathcal{M}_d(\R)$ be mappings of class $\mathcal{C}^3$, with bounded derivatives of order $1,2,3$. The mapping $\sigma$ is assumed to be bounded.
\end{ass}

For all $q\in\R^d$ and $i,j\in\{1,\ldots,d\}$, set
\begin{equation}\label{eq:a}
a_{ij}(q)=\sum_{k=1}^{d}\sigma_{ik}(q)\sigma_{jk}(q).
\end{equation}
Note that $a$ is of class $\mathcal{C}^3$, and since $\sigma$ and its derivatives are bounded, $a$ and its derivatives are bounded. Therefore the mapping $a$ is globally Lipschitz continuous: there exists $C\in(0,\infty)$ such that for all $q_1,q_2\in\R^d$ one has
\begin{equation}\label{eq:aLip}
\sum_{i,j=1}^{d}|a_{ij}(q_2)-a_{ij}(q_1)|\le C|q_2-q_1|.
\end{equation}

The initial values for the SDE system~\eqref{eq:SDE} are given by $q^\epsilon(0)=q_0^\epsilon$ and $p^\epsilon(0)=p_0^\epsilon$, such that Assumption~\ref{ass:init} below is satisfied.
\begin{ass}\label{ass:init}
There exists $q_0^0\in\R^d$ such that
\[
q_0^\epsilon\underset{\epsilon\to 0}\to q_0^0.
\]
Moreover, one has the following uniform upper bound:
\[
\underset{\epsilon\in(0,\epsilon_0)}\sup~|p_0^\epsilon|<\infty.
\]
\end{ass}
It is assumed that the initial values $q_0^\epsilon\in\R^d$ and $q_0^\epsilon\in\R$ are deterministic. The case of random initial values, independent of the Wiener process $\bigl(\beta(t)\bigr)_{t\ge 0}$, can be treated by a standard conditioning argument, provided that suitable moment bounds are satisfied. This treatment is omitted in the sequel.

All the estimates below depend on the value of $\underset{\epsilon\in(0,\epsilon_0)}\sup~|p_0^\epsilon|$, but this is not indicated explicitly.

Under Assumptions~\ref{ass:f-sigma} and~\ref{ass:init}, the SDE system~\eqref{eq:SDE} admits a unique solution $\bigl(q^\epsilon(t),p^\epsilon(t)\bigr)_{t\ge 0}$, since $f$ and $\sigma$ are globally Lipschitz continuous. The solution can be expressed as follows: for all $t\ge 0$, one has
\begin{equation}\label{eq:SDE-mild}
\left\lbrace
\begin{aligned}
q^\epsilon(t)&=q_0^\epsilon+\frac{1}{\epsilon}\int_{0}^{t}p^\epsilon(s)ds,\\
p^\epsilon(t)&=e^{-\frac{t}{\epsilon^2}}p_0^\epsilon+\frac{1}{\epsilon}\int_{0}^{t}e^{-\frac{t-s}{\epsilon^2}}f(q^\epsilon(s))ds+\frac{1}{\epsilon}\int_{0}^{t}e^{-\frac{t-s}{\epsilon^2}}d\beta(s).
\end{aligned}
\right.
\end{equation}

The following change of unknowns is employed below: for all $t\ge 0$, set
\begin{equation}\label{eq:cdvPQ}
\left\lbrace
\begin{aligned}
Q^\epsilon(t)&=q^\epsilon(t)+\epsilon p^\epsilon(t)\\
P^\epsilon(t)&=\epsilon p^\epsilon(t).
\end{aligned}
\right.
\end{equation}
The $\R^{2d}$-valued process $\bigl(Q^\epsilon(t),P^\epsilon(t)\bigr)_{t\ge 0}$ is then solution of the SDE system
\begin{equation}\label{eq:SDE-PQ}
\left\lbrace
\begin{aligned}
dQ^\epsilon(t)&=f\bigl(Q^\epsilon(t)-P^\epsilon(t)\bigr)dt+\sigma\bigl(Q^\epsilon(t)-P^\epsilon(t)\bigr)d\beta(t),\\
dP^\epsilon(t)&=-\frac{P^\epsilon(t)}{\epsilon^2}dt+f\bigl(Q^\epsilon(t)-P^\epsilon(t)\bigr)dt+\sigma\bigl(Q^\epsilon(t)-P^\epsilon(t)\bigr)d\beta(t).
\end{aligned}
\right.
\end{equation}
To retrieve properties of $q^\epsilon(t)$, note that for all $t\ge 0$ one has
\[
q^\epsilon(t)=Q^\epsilon(t)-P^\epsilon(t).
\]
The change of unknowns is instrumental in the proofs of moment bounds and strong error estimates for $Q^\epsilon$ and $P^\epsilon$, however weak error analysis is performed using only the unknowns $q^\epsilon$ and $p^\epsilon$.

\subsection{The numerical schemes}\label{sec:setting-schemes}

We introduce two numerical schemes: the first one is used to obtain strong approximation of $q^\epsilon(t)$, whereas the second one is used to obtain weak approximation of $q^\epsilon(t)$. The same notation is used for the two schemes, since it will always be clear in the statements of the results and in the analysis below which scheme is considered.

The first numerical scheme is defined as follows:
\begin{equation}\label{eq:scheme-strong}
\left\lbrace
\begin{aligned}
q_{n+1}^{\epsilon,\Delta t}&=q_n^{\epsilon,\Delta t}+\frac{\Delta t}{\epsilon}p_{n+1}^{\epsilon,\Delta t}\\
p_{n+1}^{\epsilon,\Delta t}&=p_n^{\epsilon,\Delta t}-\frac{\Delta t}{\epsilon^2}p_{n+1}^{\epsilon,\Delta t}+\frac{\Delta tf(q_n^{\epsilon,\Delta t})}{\epsilon}+\frac{\sigma(q_n^{\epsilon,\Delta t})}{\epsilon}\Delta\beta_n
\end{aligned}
\right.
\end{equation}
with the initial values $q_0^{\epsilon,\Delta t}=q_0^\epsilon$ and $p_0^{\epsilon,\Delta t}=p_0^\epsilon$ (given by Assumption~\ref{ass:init}). In the scheme~\eqref{eq:scheme-strong}, the $q$-component is discretized explicitly, whereas the $p$-component is treated implicitly. This choice is made to ensure stability properties (in particular to be able to choose the time-step size $\Delta t$ independently of the time scale separation parameter $\epsilon$) in the second equation, and to ensure good behavior of $q_n^{\epsilon,\Delta t}$ when $\epsilon$ vanishes, in the first equation, as will be explained below. In addition, discretizing the $q$-component explicitly is needed to have consistent approximation in the sense of It\^o of the contribution of the noise. Note that, in fact, the scheme~\eqref{eq:scheme-strong} can be implemented explicitly in practice owing to the following equivalent formulation, computing first $p_{n+1}^{\epsilon,\Delta t}$ and then $q_{n+1}^{\epsilon,\Delta t}$:
\[
\left\lbrace
\begin{aligned}
q_{n+1}^{\epsilon,\Delta t}&=q_n^{\epsilon,\Delta t}+\frac{\Delta t}{\epsilon}p_{n+1}^{\epsilon,\Delta t}\\
p_{n+1}^{\epsilon,\Delta t}&=\frac{1}{1+\frac{\Delta t}{\epsilon^2}}\Bigl(p_n^{\epsilon,\Delta t}+\frac{\Delta tf(q_n^{\epsilon,\Delta t})}{\epsilon}+\frac{\sigma(q_n^{\epsilon,\Delta t})}{\epsilon}\Delta\beta_n\Bigr).
\end{aligned}
\right.
\]

The second numerical scheme is defined as follows:
\begin{equation}\label{eq:scheme-weak}
\left\lbrace
\begin{aligned}
q_{n+1}^{\epsilon,\Delta t}&=q_n^{\epsilon,\Delta t}+\epsilon\bigl(1-e^{-\frac{\Delta t}{\epsilon^2}}\bigr)p_n^{\epsilon,\Delta t}+\bigl(\Delta t-\epsilon^2(1-e^{-\frac{\Delta t}{\epsilon^2}})\bigr)f(q_n^{\epsilon,\Delta t})\\
&+\sigma(q_n^{\epsilon,\Delta t})(\beta(t_{n+1})-\beta(t_n))-\sigma(q_n^{\epsilon,\Delta t})\int_{t_n}^{t_{n+1}}e^{-\frac{t_{n+1}-s}{\epsilon^2}}d\beta(s)\\
p_{n+1}^{\epsilon,\Delta t}&=e^{-\frac{\Delta t}{\epsilon^2}}p_n^{\epsilon,\Delta t}+\epsilon(1-e^{-\frac{\Delta t}{\epsilon^2}})f(q_n^{\epsilon,\Delta t})+\frac{1}{\epsilon}\sigma(q_n^{\epsilon,\Delta t})\int_{t_n}^{t_{n+1}}e^{-\frac{t_{n+1}-s}{\epsilon^2}}d\beta(s).
\end{aligned}
\right.
\end{equation}
The numerical scheme~\eqref{eq:scheme-weak} is appropriate to obtain approximation in distribution of $q^\epsilon(t_n)$ and $p^\epsilon(t_n)$: indeed, it suffices to sample at each iteration a $\R^{2d}$-valued centered Gaussian random variable
\[
\Bigl(\beta(t_{n+1})-\beta(t_n),\int_{t_n}^{t_{n+1}}e^{-\frac{t_{n+1}-s}{\epsilon^2}}d\beta(s)\Bigr),
\]
or equivalently of a family of $d$ independent $\R^2$-valued centered Gaussian random variables
\[
\Bigl(\beta_j(t_{n+1})-\beta_j(t_n),\int_{t_n}^{t_{n+1}}e^{-\frac{t_{n+1}-s}{\epsilon^2}}d\beta_j(s)\Bigr),
\]
with $j=1,\ldots,d$, which have the same covariance matrix with entries given by
\begin{align*}
\E[(\beta_j(t_{n+1})-\beta_j(t_n))^2]&=\Delta t\\
\E[(\beta_j(t_{n+1})-\beta_j(t_n))\int_{t_n}^{t_{n+1}}e^{-\frac{t_{n+1}-s}{\epsilon^2}}d\beta_j(s)]&=\int_{t_n}^{t_{n+1}}e^{-\frac{t_{n+1}-s}{\epsilon^2}}ds=\epsilon^2(1-e^{-\frac{\Delta t}{\epsilon^2}})\\
\E[\bigl(\int_{t_n}^{t_{n+1}}e^{-\frac{t_{n+1}-s}{\epsilon^2}}d\beta(s)\bigr)^2]&=\int_{t_n}^{t_{n+1}}e^{-2\frac{t_{n+1}-s}{\epsilon^2}}ds=\frac{\epsilon^2}{2}(1-e^{-\frac{2\Delta t}{\epsilon^2}}).
\end{align*}
It suffices to compute the square root or the Cholesky decomposition of the covariance matrix to sample the required Gaussian random variables. The construction of the second numerical scheme~\eqref{eq:scheme-weak} is motivated by the following result.
\begin{propo}\label{propo:SDEtilde}
Let $\epsilon\in(0,\epsilon_0)$ and $\Delta t\in(0,\Delta t_0)$. Introduce the continuous-time auxiliary process $\bigl(\tilde{q}^{\epsilon,\Delta t}(t),\tilde{p}^{\epsilon,\Delta t}(t)\bigr)_{t\ge 0}$ defined such that for all $n\in\{0,\ldots,N-1\}$ and $t\in[t_n,t_{n+1}]$, one has
\begin{equation}\label{eq:SDEtilde}
\left\lbrace
\begin{aligned}
d\tilde{q}^{\epsilon,\Delta t}(t)&=\frac{\tilde{p}^{\epsilon,\Delta t}(t)}{\epsilon}dt\\
d\tilde{p}^{\epsilon,\Delta t}(t)&=-\frac{\tilde{p}^{\epsilon,\Delta t}(t)}{\epsilon^2}dt+\frac{f(\tilde{q}^{\epsilon,\Delta t}(t_n))}{\epsilon}dt+\frac{\sigma(\tilde{q}^{\epsilon,\Delta t}(t_n))}{\epsilon}d\beta(t),
\end{aligned}
\right.
\end{equation}
with initial values $\tilde{q}^{\epsilon,\Delta t}(0)=q_0^\epsilon$ and $\tilde{p}^{\epsilon,\Delta t}(0)=p_0^\epsilon$, and such that $t\in[0,T]\mapsto \tilde{q}^{\epsilon,\Delta t}(t)$ and $t\in[0,T]\mapsto \tilde{p}^{\epsilon,\Delta t}(t)$ are continuous. Then for all $n\in\{1,\ldots,N\}$ one has
\[
\bigl(q_n^{\epsilon,\Delta t},p_n^{\epsilon,\Delta t}\bigr)=\bigl(\tilde{q}^{\epsilon,\Delta t}(t_n),\tilde{p}^{\epsilon,\Delta t}(t_n)\bigr).
\]
\end{propo}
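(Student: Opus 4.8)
The plan is to show that the continuous-time auxiliary process defined by~\eqref{eq:SDEtilde}, when sampled at the grid points $t_n$, satisfies exactly the recursion~\eqref{eq:scheme-weak}. First I would fix $n\in\{0,\ldots,N-1\}$ and solve the linear SDE system~\eqref{eq:SDEtilde} on the interval $[t_n,t_{n+1}]$ explicitly, treating it as an Ornstein--Uhlenbeck-type equation in $\tilde p^{\epsilon,\Delta t}$ with frozen coefficients $f(\tilde q^{\epsilon,\Delta t}(t_n))$ and $\sigma(\tilde q^{\epsilon,\Delta t}(t_n))$. Using the integrating factor $e^{t/\epsilon^2}$, one gets for $t\in[t_n,t_{n+1}]$
\[
\tilde p^{\epsilon,\Delta t}(t)=e^{-\frac{t-t_n}{\epsilon^2}}\tilde p^{\epsilon,\Delta t}(t_n)+\frac{f(\tilde q^{\epsilon,\Delta t}(t_n))}{\epsilon}\int_{t_n}^{t}e^{-\frac{t-s}{\epsilon^2}}\dd s+\frac{\sigma(\tilde q^{\epsilon,\Delta t}(t_n))}{\epsilon}\int_{t_n}^{t}e^{-\frac{t-s}{\epsilon^2}}\dd\beta(s),
\]
and then $\tilde q^{\epsilon,\Delta t}(t)=\tilde q^{\epsilon,\Delta t}(t_n)+\frac1\epsilon\int_{t_n}^t\tilde p^{\epsilon,\Delta t}(r)\dd r$.

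Next I would evaluate these formulas at $t=t_{n+1}$, so that $t-t_n=\Delta t$. For the $p$-component, the deterministic integral is $\int_{t_n}^{t_{n+1}}e^{-\frac{t_{n+1}-s}{\epsilon^2}}\dd s=\epsilon^2(1-e^{-\frac{\Delta t}{\epsilon^2}})$, which gives precisely the third line of~\eqref{eq:scheme-weak}. For the $q$-component I would substitute the expression for $\tilde p^{\epsilon,\Delta t}(r)$ into $\frac1\epsilon\int_{t_n}^{t_{n+1}}\tilde p^{\epsilon,\Delta t}(r)\dd r$ and compute the three resulting terms: the term with $\tilde p^{\epsilon,\Delta t}(t_n)$ produces $\frac1\epsilon\bigl(\int_{t_n}^{t_{n+1}}e^{-\frac{r-t_n}{\epsilon^2}}\dd r\bigr)\tilde p^{\epsilon,\Delta t}(t_n)=\epsilon(1-e^{-\frac{\Delta t}{\epsilon^2}})\tilde p^{\epsilon,\Delta t}(t_n)$; the term with $f(\tilde q^{\epsilon,\Delta t}(t_n))$ requires the double integral $\frac1{\epsilon^2}\int_{t_n}^{t_{n+1}}\int_{t_n}^{r}e^{-\frac{r-s}{\epsilon^2}}\dd s\,\dd r=\frac1{\epsilon^2}\int_{t_n}^{t_{n+1}}\epsilon^2(1-e^{-\frac{r-t_n}{\epsilon^2}})\dd r=\Delta t-\epsilon^2(1-e^{-\frac{\Delta t}{\epsilon^2}})$, matching the second term of the first line.

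The only genuinely delicate point is the stochastic term in the $q$-component, namely $\frac1{\epsilon^2}\int_{t_n}^{t_{n+1}}\Bigl(\int_{t_n}^{r}e^{-\frac{r-s}{\epsilon^2}}\dd\beta(s)\Bigr)\dd r$. Here I would apply the stochastic Fubini theorem to exchange the order of integration, writing it as $\frac1{\epsilon^2}\int_{t_n}^{t_{n+1}}\Bigl(\int_{s}^{t_{n+1}}e^{-\frac{r-s}{\epsilon^2}}\dd r\Bigr)\dd\beta(s)=\frac1{\epsilon^2}\int_{t_n}^{t_{n+1}}\epsilon^2\bigl(1-e^{-\frac{t_{n+1}-s}{\epsilon^2}}\bigr)\dd\beta(s)=\int_{t_n}^{t_{n+1}}\dd\beta(s)-\int_{t_n}^{t_{n+1}}e^{-\frac{t_{n+1}-s}{\epsilon^2}}\dd\beta(s)=\bigl(\beta(t_{n+1})-\beta(t_n)\bigr)-\int_{t_n}^{t_{n+1}}e^{-\frac{t_{n+1}-s}{\epsilon^2}}\dd\beta(s)$. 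Multiplying by $\sigma(\tilde q^{\epsilon,\Delta t}(t_n))$ gives exactly the last two terms on the first line of~\eqref{eq:scheme-weak}. Assembling all the pieces shows that $(\tilde q^{\epsilon,\Delta t}(t_{n+1}),\tilde p^{\epsilon,\Delta t}(t_{n+1}))$ is obtained from $(\tilde q^{\epsilon,\Delta t}(t_n),\tilde p^{\epsilon,\Delta t}(t_n))$ by the map~\eqref{eq:scheme-weak}; since the initial conditions agree and the recursions agree, an immediate induction on $n$ yields $(q_n^{\epsilon,\Delta t},p_n^{\epsilon,\Delta t})=(\tilde q^{\epsilon,\Delta t}(t_n),\tilde p^{\epsilon,\Delta t}(t_n))$ for all $n\in\{1,\ldots,N\}$. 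The main obstacle is purely bookkeeping: correctly carrying out the iterated-integral computations and the stochastic Fubini step, while keeping track of which $e^{-\frac{\Delta t}{\epsilon^2}}$ factors come from which integral — there is no conceptual difficulty once the linear SDE is solved explicitly.
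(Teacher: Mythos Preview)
Your proposal is correct and follows essentially the same approach as the paper's proof: solve the linear Ornstein--Uhlenbeck equation for $\tilde p^{\epsilon,\Delta t}$ on each subinterval with the integrating factor, integrate to obtain $\tilde q^{\epsilon,\Delta t}$, apply the stochastic Fubini theorem to the iterated stochastic integral, and conclude by induction from the matching initial values. The only cosmetic difference is that you spell out the deterministic double integral for the $f$-term separately, whereas the paper handles it implicitly alongside the stochastic Fubini computation.
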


\begin{proof}[Proof of Proposition~\ref{propo:SDEtilde}]
Let $n\in\{0,\ldots,N-1\}$, then for all $t\in[t_n,t_{n+1}]$, one has
\[
\tilde{p}^{\epsilon,\Delta t}(t)=e^{-\frac{t-t_n}{\epsilon^2}}\tilde{p}^{\epsilon,\Delta t}(t_n)+\epsilon(1-e^{-\frac{t-t_n}{\epsilon^2}})f(\tilde{q}^{\epsilon,\Delta t}(t_n))+\frac{1}{\epsilon}\sigma(\tilde{q}^{\epsilon,\Delta t}(t_n))\int_{t_n}^{t}e^{-\frac{t-s}{\epsilon^2}}d\beta(s).
\]
This then gives the equality
\begin{align*}
\tilde{q}^{\epsilon,\Delta t}(t_{n+1})&=\tilde{q}^{\epsilon,\Delta t}(t_n)+\frac{1}{\epsilon}\int_{t_n}^{t_{n+1}}\tilde{p}^{\epsilon,\Delta t}(t)dt\\
&=\tilde{q}^{\epsilon,\Delta t}(t_n)+\epsilon(1-e^{-\frac{\Delta t}{\epsilon^2}})\tilde{p}^{\epsilon,\Delta t}(t_n)+\bigl(\Delta t-\epsilon^2(1-e^{-\frac{\Delta t}{\epsilon^2}})\bigr)f(\tilde{q}^{\epsilon,\Delta t}(t_n))\\
&+\frac{1}{\epsilon^2}\sigma(\tilde{q}^{\epsilon,\Delta t}(t_n))\int_{t_n}^{t_{n+1}}\int_{t_n}^{t}e^{-\frac{t-s}{\epsilon^2}}d\beta(s)dt,
\end{align*}
where using the stochastic Fubini theorem one obtains
\begin{align*}
\int_{t_n}^{t_{n+1}}\int_{t_n}^{t}e^{-\frac{t-s}{\epsilon^2}}d\beta(s)dt&=\int_{t_n}^{t_{n+1}}\int_{s}^{t_{n+1}}e^{-\frac{t-s}{\epsilon^2}}dtd\beta(s)\\
&=\int_{t_n}^{t_{n+1}}\epsilon^2(1-e^{-\frac{t_{n+1}-s}{\epsilon^2}})d\beta(s)\\
&=\epsilon^2(\beta(t_{n+1})-\beta(t_n))-\epsilon^2\int_{t_n}^{t_{n+1}}e^{-\frac{t_{n+1}-s}{\epsilon^2}}d\beta(s).
\end{align*}
Since $\tilde{q}^{\epsilon,\Delta t}(0)=q_0^{\epsilon,\Delta t}$ and $\tilde{p}^{\epsilon,\Delta t}(0)=p_0^{\epsilon,\Delta t}$, it is then straightforward to check that $\tilde{q}^{\epsilon,\Delta t}(t_n)=q_n^{\epsilon,\Delta t}$ and $\tilde{p}^{\epsilon,\Delta t}(t_n)=p_n^{\epsilon,\Delta t}$ for all $n\in\{1,\ldots,N\}$. The proof of Proposition~\ref{propo:SDEtilde} is thus completed.
\end{proof}

As a consequence of Proposition~\ref{propo:SDEtilde}, observe that if $f$ and $\sigma$ are constant, then the scheme~\eqref{eq:scheme-weak} is exact: $q_n^{\epsilon,\Delta t}=q^\epsilon(t_n)$ and $p_n^{\epsilon,\Delta t}=p^\epsilon(t_n)$. Using the implementation of the scheme explained above, this means that one obtains a scheme which is exact in distribution. The numerical scheme~\eqref{eq:scheme-weak} is thus constructed by freezing the values of the $q$-component on each interval $[t_n,t_{n+1}]$ when applying the mappings $f$ and $\sigma$, and by computing the exact solution of the SDE depending on $f(q_n^{\epsilon,\Delta t})$ and $\sigma(q_n^{\epsilon,\Delta t})$ on each interval.

Note that the continuous auxiliary processes $\bigl(\tilde{q}^{\epsilon,\Delta t}(t),\tilde{p}^{\epsilon,\Delta t}(t)\bigr)_{t\ge 0}$ play a role below in the proof of the uniform weak error estimates for the numerical scheme~\eqref{eq:scheme-weak}.

Like in the continuous-time setting (see~\eqref{eq:cdvPQ}), it is convenient to introduce auxiliary unknowns
\begin{equation}\label{eq:cdvPQscheme}
\left\lbrace
\begin{aligned}
Q_n^{\epsilon,\Delta t}&=q_n^{\epsilon,\Delta t}+\epsilon p_n^{\epsilon,\Delta t}\\
P_n^{\epsilon,\Delta t}&=\epsilon p_n^{\epsilon,\Delta t}.
\end{aligned}
\right.
\end{equation}
After proving some properties for the unknowns $Q_n^{\epsilon,\Delta t}$ and $P_n^{\epsilon,\Delta t}$, the identity $q_n^{\epsilon,\Delta t}=Q_n^{\epsilon,\Delta t}-P_n^{\epsilon,\Delta t}$ is then used to retrieve properties of the unknown $q_n^{\epsilon,\Delta t}$.

If the first numerical scheme~\eqref{eq:scheme-strong} is used, the system after the change of variables reads
\begin{equation}\label{eq:scheme-strong-QP}
\left\lbrace
\begin{aligned}
Q_{n+1}^{\epsilon,\Delta t}&=Q_n^{\epsilon,\Delta t}+\Delta tf(Q_n^{\epsilon,\Delta t}-P_n^{\epsilon,\Delta t})+\sigma(Q_n^{\epsilon,\Delta t}-P_n^{\epsilon,\Delta t})\Delta\beta_n\\
P_{n+1}^{\epsilon,\Delta t}&=\frac{1}{1+\frac{\Delta t}{\epsilon^2}}\Bigl(P_n^{\epsilon,\Delta t}+\Delta tf(Q_n^{\epsilon,\Delta t}-P_n^{\epsilon,\Delta t})+\sigma(Q_n^{\epsilon,\Delta t}-P_n^{\epsilon,\Delta t})\Delta\beta_n\Bigr).
\end{aligned}
\right.
\end{equation}
If the second numerical scheme~\eqref{eq:scheme-weak} is used, the system after the change of variables reads
\begin{equation}\label{eq:scheme-weak-QP}
\left\lbrace
\begin{aligned}
Q_{n+1}^{\epsilon,\Delta t}&=Q_n^{\epsilon,\Delta t}+\Delta tf(Q_n^{\epsilon,\Delta t}-P_n^{\epsilon,\Delta t})+\sigma(Q_n^{\epsilon,\Delta t}-P_n^{\epsilon,\Delta t})\Delta\beta_n\\
P_{n+1}^{\epsilon,\Delta t}&=e^{-\frac{\Delta t}{\epsilon^2}}P_n^{\epsilon,\Delta t}+\epsilon^2(1-e^{-\frac{\Delta t}{\epsilon^2}})f(Q_n^{\epsilon,\Delta t}-P_n^{\epsilon,\Delta t})+\sigma(Q_n^{\epsilon,\Delta t}-P_n^{\epsilon,\Delta t})\int_{t_n}^{t_{n+1}}e^{-\frac{t_{n+1}-s}{\epsilon^2}}d\beta(s).
\end{aligned}
\right.
\end{equation}

It is straightforward to check that the numerical schemes~\eqref{eq:scheme-strong} and~\eqref{eq:scheme-weak} give consistent strong and weak approximation respectively of the solution of the system~\eqref{eq:SDE} when $\Delta t\to 0$, for any fixed value of the time-scale separation parameter $\epsilon\in(0,\epsilon_0)$. Since the objective of this article is to prove that these schemes can be run with a cost independent of $\epsilon$, it is relevant to study the behavior when $\epsilon\to 0$ of $q^\epsilon(t)$ and of $q_n^{\epsilon,\Delta t}$.

\subsection{Asymptotic behavior when the time scale separation parameter vanishes}

Consider the stochastic differential equation
\begin{equation}\label{eq:limitingSDE}
dq^0(t)=f(q^0(t))dt+\sigma(q^0(t))d\beta(t),
\end{equation}
where $q^0(t)\in\R^d$, with initial value $q^0(0)=q_0^0=\underset{\epsilon\to 0}\lim~q_0^\epsilon$ (see Assumption~\ref{ass:init}). Owing to Assumption~\ref{ass:f-sigma}, $f$ and $\sigma$ are globally Lipschitz continuous, therefore the SDE~\eqref{eq:limitingSDE} admits a unique solution $\bigl(q^0(t)\bigr)_{t\ge 0}$. This solution satisfies the identity
\begin{equation}\label{eq:limitingSDEintegral}
q^0(t)=q_0^0+\int_0^t f(q^0(s))ds+\int_0^t \sigma(q^0(s))d\beta(s)
\end{equation}
for all $t\ge 0$.

Consider also the standard Euler--Maruyama scheme applied to the SDE~\eqref{eq:limitingSDE}, with time-step size $\Delta t$: set $q_0^{0,\Delta t}=q_0^0$ and for all $n\in\{0,\ldots,N-1\}$, set
\begin{equation}\label{eq:limitingscheme}
q_{n+1}^{0,\Delta t}=q_n^{0,\Delta t}+\Delta tf(q_n^{0,\Delta t})+\sigma(q_n^{0,\Delta t})\Delta\beta_n,
\end{equation}
where we recall that $\Delta\beta_n=\beta(t_{n+1})-\beta(t_n)$.

One has the following convergence result when $\epsilon\to 0$.
\begin{propo}\label{propo:cv}
Let Assumptions~\ref{ass:f-sigma} and~\ref{ass:init} be satisfied. For all $T\in(0,\infty)$, there exists $C(T)\in(0,\infty)$ such that for all $\epsilon\in(0,\epsilon_0)$, one has
\begin{equation}\label{eq:cvSDE}
\underset{0\le t\le T}\sup~\E[|q^\epsilon(t)-q^0(t)|^2]\le C(T)\Bigl(|q_0^\epsilon-q_0^0|^2+\epsilon^2\bigl(1+|q_0^\epsilon|^2+|p_0^\epsilon|^2\bigr)\Bigr)\underset{\epsilon\to 0}\to 0.
\end{equation}
In addition, for all $\Delta t\in(0,\Delta t_0)$, one has
\begin{equation}\label{eq:cvscheme}
\underset{n=0,\ldots,N}\sup~\E[|q_n^{\epsilon,\Delta t}-q_n^{0,\Delta t}|^2]\le C(T)\Bigl(|q_0^\epsilon-q_0^0|^2+\epsilon^2\bigl(1+|q_0^\epsilon|^2+|p_0^\epsilon|^2\bigr)\Bigr)\underset{\epsilon\to 0}\to 0,
\end{equation}
where $\bigl(q_n^{\epsilon,\Delta t},p_n^{\epsilon,\Delta t}\bigr)_{n=0,\ldots,N}$ is given either by~\eqref{eq:scheme-strong} or by~\eqref{eq:scheme-weak}.
\end{propo}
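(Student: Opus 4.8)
The plan is to pass to the change of unknowns~\eqref{eq:cdvPQ} and~\eqref{eq:cdvPQscheme} and to compare $Q^\epsilon$ with $q^0$ and $Q_n^{\epsilon,\Delta t}$ with $q_n^{0,\Delta t}$. The point is that, by~\eqref{eq:SDE-PQ}, the process $Q^\epsilon$ solves an equation identical to the limiting equation~\eqref{eq:limitingSDE} except that the argument of $f$ and $\sigma$ is $Q^\epsilon-P^\epsilon=q^\epsilon$ instead of $Q^\epsilon$, and that $P^\epsilon$ is of order $\epsilon$ in the mean-square sense; likewise, by~\eqref{eq:scheme-strong-QP} and~\eqref{eq:scheme-weak-QP}, the process $Q_n^{\epsilon,\Delta t}$ obeys an Euler-type recursion with increments differing from those of~\eqref{eq:limitingscheme} only through the argument $Q_n^{\epsilon,\Delta t}-P_n^{\epsilon,\Delta t}=q_n^{\epsilon,\Delta t}$ of $f$ and $\sigma$. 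Since $q^\epsilon=Q^\epsilon-P^\epsilon$ and $q_n^{\epsilon,\Delta t}=Q_n^{\epsilon,\Delta t}-P_n^{\epsilon,\Delta t}$, both~\eqref{eq:cvSDE} and~\eqref{eq:cvscheme} then follow from a triangle inequality. Throughout, I use the moment bounds of Section~\ref{sec:moments}, namely that $\E[|q^\epsilon(t)|^2]$ and $\E[|q_n^{\epsilon,\Delta t}|^2]$ are bounded by $C(T)(1+|q_0^\epsilon|^2+|p_0^\epsilon|^2)$ uniformly in $\epsilon$, $\Delta t$, $t\le T$ and $n\le N$.

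The first step is to prove that $\sup_{0\le t\le T}\E[|P^\epsilon(t)|^2]$ and $\sup_{0\le n\le N}\E[|P_n^{\epsilon,\Delta t}|^2]$ are bounded by $C(T)\epsilon^2(1+|q_0^\epsilon|^2+|p_0^\epsilon|^2)$. In continuous time one writes the mild formulation $P^\epsilon(t)=e^{-t/\epsilon^2}\epsilon p_0^\epsilon+\int_0^t e^{-(t-s)/\epsilon^2}f(q^\epsilon(s))\,ds+\int_0^t e^{-(t-s)/\epsilon^2}\sigma(q^\epsilon(s))\,d\beta(s)$ and estimates the three terms separately: the first by $\epsilon|p_0^\epsilon|$; the second by Cauchy--Schwarz together with $\int_0^t e^{-(t-s)/\epsilon^2}\,ds\le\epsilon^2$, the linear growth of $f$ and the moment bound, which gives a term of order $\epsilon^4$; the third by the It\^o isometry together with $\int_0^t e^{-2(t-s)/\epsilon^2}\,ds\le\epsilon^2/2$ and the boundedness of $\sigma$, which gives a term of order $\epsilon^2$. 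In discrete time one solves explicitly the linear recursions for $P_n^{\epsilon,\Delta t}$ read off from~\eqref{eq:scheme-strong-QP} and~\eqref{eq:scheme-weak-QP}, with contraction factors $\lambda=(1+\Delta t/\epsilon^2)^{-1}$ and $\mu=e^{-\Delta t/\epsilon^2}$ respectively, and repeats the same three estimates; the elementary bounds that make this work are $\sum_{k=0}^{n-1}\lambda^{n-k}\Delta t\le\epsilon^2$ and $\Delta t\sum_{k=0}^{n-1}\lambda^{2(n-k)}\le\epsilon^2/2$, together with their counterparts $\epsilon^2(1-\mu)\sum_{k=0}^{n-1}\mu^{n-1-k}\le\epsilon^2$ and $(\epsilon^2/2)(1-\mu^2)\sum_{k=0}^{n-1}\mu^{2(n-1-k)}\le\epsilon^2/2$.

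The second step is a Gr\"onwall comparison. In continuous time, subtracting~\eqref{eq:limitingSDEintegral} from the integral form of the $Q^\epsilon$-equation in~\eqref{eq:SDE-PQ} gives an initial error $q_0^\epsilon+\epsilon p_0^\epsilon-q_0^0$ and, using the Lipschitz continuity of $f$ and $\sigma$, the quantity $|f(q^\epsilon(s))-f(q^0(s))|$ and the analogous difference for $\sigma$ are bounded by $C(|Q^\epsilon(s)-q^0(s)|+|P^\epsilon(s)|)$; applying Cauchy--Schwarz to the drift, the It\^o isometry to the diffusion, inserting the bound from the first step and invoking Gr\"onwall's lemma yields $\sup_{0\le t\le T}\E[|Q^\epsilon(t)-q^0(t)|^2]\le C(T)\bigl(|q_0^\epsilon-q_0^0|^2+\epsilon^2(1+|q_0^\epsilon|^2+|p_0^\epsilon|^2)\bigr)$. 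In discrete time one sets $e_n=Q_n^{\epsilon,\Delta t}-q_n^{0,\Delta t}$, uses that $\Delta\beta_n$ is centered and independent of $(q_k^{\epsilon,\Delta t})_{k\le n}$ and $(q_k^{0,\Delta t})_{k\le n}$ to discard the martingale cross term, and obtains $\E[|e_{n+1}|^2]\le(1+C\Delta t)\E[|e_n|^2]+C\Delta t\,\E[|P_n^{\epsilon,\Delta t}|^2]$; the discrete Gr\"onwall lemma, together with the first step, $\sum_k\Delta t\le T$ and $|e_0|^2\le 2|q_0^\epsilon-q_0^0|^2+2\epsilon^2|p_0^\epsilon|^2$, gives the same bound for $\sup_{0\le n\le N}\E[|e_n|^2]$. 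Finally, from $q^\epsilon-q^0=(Q^\epsilon-q^0)-P^\epsilon$ and $q_n^{\epsilon,\Delta t}-q_n^{0,\Delta t}=e_n-P_n^{\epsilon,\Delta t}$, the estimates~\eqref{eq:cvSDE} and~\eqref{eq:cvscheme} follow, and their right-hand sides tend to $0$ as $\epsilon\to 0$ by Assumption~\ref{ass:init}.

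The main obstacle is the first step: one has to track the $\epsilon$-dependence carefully enough that the mean-square size of $P^\epsilon$ and of $P_n^{\epsilon,\Delta t}$ is genuinely $O(\epsilon^2)$ and not, say, $O(\Delta t)$ --- a crude estimate of the drift or martingale terms in the discrete recursions would produce a contribution that does not vanish as $\epsilon\to 0$, which is why the precise geometric-sum identities and the interplay between $\Delta t$, $\epsilon^2$ and the contraction rates $\lambda$ and $\mu$ are essential. It is also here that the change of unknowns~\eqref{eq:cdvPQ}--\eqref{eq:cdvPQscheme} pays off: working directly with $q^\epsilon$ and $p^\epsilon$ one would have to handle the singular prefactor $1/\epsilon$ in front of $p^\epsilon$. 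Everything else reduces to routine Lipschitz estimates and (discrete) Gr\"onwall arguments, conditional on the moment bounds of Section~\ref{sec:moments}.
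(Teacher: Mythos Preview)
Your proposal is correct and follows essentially the same route as the paper: pass to the variables $(Q^\epsilon,P^\epsilon)$, use that $P^\epsilon=\epsilon p^\epsilon$ is $O(\epsilon)$ in mean square, and run a (discrete) Gr\"onwall comparison between $Q^\epsilon$ and $q^0$ (resp.\ $Q_n^{\epsilon,\Delta t}$ and $q_n^{0,\Delta t}$), finishing by a triangle inequality. The only difference is organizational: your ``Step~1'' re-derives the $O(\epsilon^2)$ bound on $\E[|P^\epsilon(t)|^2]$ and $\E[|P_n^{\epsilon,\Delta t}|^2]$ from scratch via the mild formulation and geometric-sum estimates, whereas the paper simply writes $P^\epsilon(t)=\epsilon p^\epsilon(t)$ and invokes the moment bounds on $p^\epsilon$ and $p_n^{\epsilon,\Delta t}$ already proved in Propositions~\ref{propo:moment-SDE}--\ref{propo:moment-scheme-weak}; this makes what you call ``the main obstacle'' immediate and the whole argument a few lines long. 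The paper also runs Gr\"onwall directly on $\E[|q^\epsilon(t)-q^0(t)|^2]$ rather than on $\E[|Q^\epsilon(t)-q^0(t)|^2]$, but this is cosmetic.
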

The proof of Proposition~\ref{propo:cv} is postponed to Section~\ref{sec:moments-cv}, since it requires moment bounds (uniform with respect to $\epsilon$) which are stated and proved in Section~\ref{sec:moments}.

\section{Main results}\label{sec:main}

We are now in position to state the main results of this article. First, in Section~\ref{sec:main-strong}, we study strong error estimates when the numerical scheme~\eqref{eq:scheme-strong} is used. Second, in Section~\ref{sec:main-weak}, we study strong error estimates when the numerical scheme~\eqref{eq:scheme-weak} is used.

\subsection{Uniform strong error estimates}\label{sec:main-strong}

In this subsection, let us consider the numerical scheme~\eqref{eq:scheme-strong}. One has the following result concerning the strong error $\E[|q_n^{\epsilon,\Delta t}-q^\epsilon(n\Delta t)|^2]$ for the $q$-component, when $\Delta t\to 0$.

\begin{theo}\label{theo:strong}
Let Assumptions~\ref{ass:f-sigma} and~\ref{ass:init} be satisfied, and let $\bigl(q_n^{\epsilon,\Delta t}\bigr)_{n\ge 0}$ be given by the numerical scheme~\eqref{eq:scheme-strong}. For all $T\in(0,\infty)$, there exists $C(T)\in(0,\infty)$ such that for all $\Delta t=T/N\in(0,\Delta t_0)$ and $n\in\{1,\ldots,N\}$, one has
\begin{equation}\label{eq:theo-strong}
\underset{\epsilon\in(0,\epsilon_0)}\sup~\E[|q_n^{\epsilon,\Delta t}-q^\epsilon(n\Delta t)|^2]\le C(T)(1+|q_0^0|^2)\Delta t+\frac{C(T)}{(n+1)^2}.
\end{equation}
\end{theo}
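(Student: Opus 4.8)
The plan is to work with the change of unknowns $(Q^\epsilon,P^\epsilon)$ and $(Q_n^{\epsilon,\Delta t},P_n^{\epsilon,\Delta t})$ from \eqref{eq:cdvPQ} and \eqref{eq:cdvPQscheme}, since the SDE system \eqref{eq:SDE-PQ} and the scheme \eqref{eq:scheme-strong-QP} have the advantage that the $Q$-equations involve only $O(1)$ coefficients, and the singular term $-P/\epsilon^2$ appears only in the $P$-equation where it has a \emph{stabilizing} effect. I would first record uniform-in-$\epsilon$ moment bounds for $q^\epsilon$, $p^\epsilon$, $Q_n^{\epsilon,\Delta t}$, $P_n^{\epsilon,\Delta t}$ (these are the bounds referred to in Section~\ref{sec:moments}), and crucially a bound showing that $P^\epsilon(t)$ and $P_n^{\epsilon,\Delta t}$ are of order $\epsilon$ up to a transient term: something like $\E[|P^\epsilon(t)|^2]^{1/2}\le C\epsilon + Ce^{-ct/\epsilon^2}|p_0^\epsilon|\epsilon$, and the discrete analogue $\E[|P_n^{\epsilon,\Delta t}|^2]^{1/2}\le C\epsilon + C(1+\Delta t/\epsilon^2)^{-n}|p_0^\epsilon|\epsilon$. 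Since $q^\epsilon=Q^\epsilon-P^\epsilon$ and $q_n^{\epsilon,\Delta t}=Q_n^{\epsilon,\Delta t}-P_n^{\epsilon,\Delta t}$, it then suffices to estimate $\E[|Q_n^{\epsilon,\Delta t}-Q^\epsilon(t_n)|^2]$ and to control the two $P$-terms separately.

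Next I would set $e_n = Q_n^{\epsilon,\Delta t}-Q^\epsilon(t_n)$ and subtract the integral form of \eqref{eq:SDE-PQ} from \eqref{eq:scheme-strong-QP}. Writing $e_{n+1}=e_n + \Delta t\bigl(f(Q_n^{\epsilon,\Delta t}-P_n^{\epsilon,\Delta t})-\tfrac1{\Delta t}\int_{t_n}^{t_{n+1}}f(Q^\epsilon(s)-P^\epsilon(s))ds\bigr) + \bigl(\sigma(Q_n^{\epsilon,\Delta t}-P_n^{\epsilon,\Delta t})\Delta\beta_n-\int_{t_n}^{t_{n+1}}\sigma(Q^\epsilon(s)-P^\epsilon(s))d\beta(s)\bigr)$, I would split each bracket into (i) a Lipschitz term $\lesssim |e_n| + |P_n^{\epsilon,\Delta t}-P^\epsilon(t_n)|$ plus a time-regularity term for $Q^\epsilon$ and $P^\epsilon$ on $[t_n,t_{n+1}]$. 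The diffusion bracket is handled by Itô isometry after conditioning on $\mathcal F_{t_n}$, so the martingale increments do not accumulate linearly; the drift bracket is handled by Cauchy–Schwarz. Squaring, taking expectations, using $\E[\langle e_n,\text{martingale incr}\rangle]=0$, and summing gives a discrete Gronwall inequality $\E[|e_{n+1}|^2]\le (1+C\Delta t)\E[|e_n|^2] + C\Delta t\sum_{k\le n}\bigl(\E[|P_k^{\epsilon,\Delta t}-P^\epsilon(t_k)|^2] + (\text{local error}_k)\bigr)$. The local errors coming from the time-regularity of $Q^\epsilon$ are $O(\Delta t^2)$ in $L^2$, contributing $O(\Delta t)$ after summation; this yields the $C(T)(1+|q_0^0|^2)\Delta t$ term.

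The remaining and most delicate point — and I expect this to be the main obstacle — is controlling $\E[|P_n^{\epsilon,\Delta t}-P^\epsilon(t_n)|^2]$ and the leftover $P$-terms \emph{uniformly in $\epsilon$}, because naive estimates blow up like $\epsilon^{-2}$. Here one exploits that both $P^\epsilon$ and $P_n^{\epsilon,\Delta t}$ are themselves $O(\epsilon)$ after an initial layer: the difference is of size $O(\epsilon\,\Delta t/\epsilon^2)=O(\Delta t/\epsilon)$ in the worst case, which is \emph{not} uniform, so one must instead bound $\E[|P_n^{\epsilon,\Delta t}|^2]$ and $\E[|P^\epsilon(t_n)|^2]$ separately and combine. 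Since $|P_n^{\epsilon,\Delta t}|^2\lesssim \epsilon^2 + (1+\Delta t/\epsilon^2)^{-2n}|p_0^\epsilon|^2$ and $(1+x)^{-n}\le (nx)^{-1}$ for $x>0$ (so $(1+\Delta t/\epsilon^2)^{-n}\le \epsilon^2/(n\Delta t)$), the transient term contributes at most $C\epsilon^4/(n^2\Delta t^2)$; a similar bound holds for $P^\epsilon$ with $e^{-2t_n/\epsilon^2}\le C(\epsilon^2/t_n)^2$. Feeding this into the Gronwall sum, the $\epsilon^2$-parts contribute $O(\epsilon^2)\le O(\Delta t)$-type terms absorbed into $C(T)\Delta t$, while the transient parts — after summing $\sum_{k=1}^n 1/k^2 \le \pi^2/6$ and the Gronwall factor $e^{CT}$ — produce exactly the anomalous $C(T)/(n+1)^2$ term in \eqref{eq:theo-strong}, which cannot be made $O(\Delta t)$ uniformly because when $\epsilon\sim\sqrt{\Delta t}$ the initial fast layer is resolved by only $O(1)$ steps. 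Assembling $q_n^{\epsilon,\Delta t}-q^\epsilon(t_n)=e_n-(P_n^{\epsilon,\Delta t}-P^\epsilon(t_n))$ and using $(a+b)^2\le 2a^2+2b^2$ completes the proof.
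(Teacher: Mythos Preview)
Your overall strategy --- pass to $(Q,P)$, run a discrete Gronwall on the $Q$-error, and isolate the initial-layer contribution coming from $P$ --- is the right one and matches the paper. But the step you flag as ``most delicate'' contains a genuine gap.

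You propose to control $\E[|P_n^{\epsilon,\Delta t}-P^\epsilon(t_n)|^2]$ by bounding $\E[|P_n^{\epsilon,\Delta t}|^2]$ and $\E[|P^\epsilon(t_n)|^2]$ \emph{separately}, obtaining $O(\epsilon^2)$ plus transient terms, and then assert that ``the $\epsilon^2$-parts contribute $O(\epsilon^2)\le O(\Delta t)$''. This is false: $\epsilon\in(0,\epsilon_0)$ and $\Delta t\in(0,\Delta t_0)$ are independent parameters, and for fixed $\epsilon$ and $\Delta t\to 0$ an $O(\epsilon^2)$ term does \emph{not} vanish. Your argument would yield at best
\[
\E[|q_n^{\epsilon,\Delta t}-q^\epsilon(t_n)|^2]\le C(T)\Delta t + C(T)\epsilon^2 + \frac{C(T)}{(n+1)^2},
\]
which after $\sup_{\epsilon}$ leaves a non-vanishing constant $C(T)\epsilon_0^2$ and does not give \eqref{eq:theo-strong}.

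The paper avoids this by estimating the \emph{difference} $P_n^{\epsilon,\Delta t}-P^\epsilon(t_n)$ directly. It writes both in mild form and subtracts, producing an initial-layer term
\[
e_{n,P,0}^{\epsilon,\Delta t}=\Bigl(\frac{1}{(1+\Delta t/\epsilon^2)^n}-e^{-n\Delta t/\epsilon^2}\Bigr)\epsilon p_0^\epsilon
\]
plus eight further terms comparing the discrete and continuous convolutions. The crucial elementary inequality is
\[
\sup_{\tau>0}\Bigl(\frac{1}{(1+\tau)^n}-e^{-n\tau}\Bigr)\le \frac{C}{n+1},
\]
which bounds $|e_{n,P,0}^{\epsilon,\Delta t}|^2$ by $C\epsilon^2|p_0^\epsilon|^2/(n+1)^2$ --- this is the source of the $C(T)/(n+1)^2$ term. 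The remaining terms are handled with $|1-e^{-\tau}|\le C\tau^{1/2}$, It\^o isometry, and a uniform temporal-regularity lemma for $(Q^\epsilon,P^\epsilon)$ (Lemma~\ref{lem:reg}, which gives $\E[|P^\epsilon(s_2)-P^\epsilon(s_1)|^2]\le C|s_2-s_1|$ \emph{uniformly in $\epsilon$}); each contributes $O(\Delta t)$. Finally the paper runs Gronwall on the \emph{combined} quantity $E_n=\E[|Q_n-Q^\epsilon(t_n)|^2]+\E[|P_n-P^\epsilon(t_n)|^2]$, not on the $Q$-error alone. The missing idea in your sketch is precisely this: you must compare the discrete and continuous semigroup factors $(1+\Delta t/\epsilon^2)^{-k}$ and $e^{-(t_n-s)/\epsilon^2}$ directly rather than discard the cancellation between them.
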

The remarkable property of~\eqref{eq:theo-strong} is that the error estimate is uniform with respect to the time-scale separation parameter $\epsilon$. In fact, the proof provides a more precise error estimate, which is combined with Assumption~\ref{ass:init} to obtain~\eqref{eq:theo-strong}: one has
\begin{equation}\label{eq:theo-strong-precis}
\E[|q_n^{\epsilon,\Delta t}-q^\epsilon(n\Delta t)|^2]\le C(T)(1+|q_0^\epsilon|^2+|p_0^\epsilon|^2)\Delta t+\frac{C(T)\epsilon^2}{(n+1)^2}|p_0^\epsilon|^2.
\end{equation}

As a consequence, one obtains the following uniform strong error estimates. First, choosing $N=n$, and using the fact that $1/N^2=\Delta t^2/T^2$, one obtains the uniform strong error estimate at the final time $T=N\Delta t$:
\begin{equation}\label{eq:theo-strong-final}
\underset{\epsilon\in(0,\epsilon_0)}\sup~\E[|q_N^{\epsilon,\Delta t}-q^\epsilon(N\Delta t)|^2]\le C(T)(1+|q_0^0|^2)\Delta t.
\end{equation}
Second, if $p_0^\epsilon=0$, one also obtains
\[
\underset{\epsilon\in(0,\epsilon_0)}\sup~\underset{n=0,\ldots,N}\sup~\E[|q_n^{\epsilon,\Delta t}-q^\epsilon(n\Delta t)|^2]\le C(T)(1+|q_0^0|^2)\Delta t,
\]
where the error estimate is uniform with respect to both $\epsilon\in(0,\epsilon_0)$ and $n\in\{0,\ldots,N\}$.

Observe that by letting $\epsilon\to 0$ in the strong error estimate~\eqref{eq:theo-strong-precis} and using Proposition~\ref{propo:cv}, one retrieves the standard strong error estimate for the Euler--Maruyama scheme~\eqref{eq:limitingscheme} applied to the SDE~\eqref{eq:limitingSDE}: one has
\begin{equation}\label{eq:theo-strong-limiting}
\underset{n=0,\ldots,N}\sup~\E[|q_n^{0,\Delta t}-q^0(n\Delta t)|^2]\le C(T)(1+|q_0^0|^2)\Delta t.
\end{equation}
In general, when $\sigma$ is not constant, the error estimate~\eqref{eq:theo-strong-limiting} above is optimal: the Euler--Maruyama scheme has strong order of convergence equal to $1/2$, thus the order of convergence in Theorem~\ref{theo:strong} is also optimal in general. If the diffusion coefficient $\sigma$ is constant, the strong order of convergence of the Euler--Maruyama scheme applied to the SDE~\eqref{eq:limitingSDE} driven by additive noise is in fact equal to $1$: one can replace $\Delta t$ by $\Delta t^2$ in the right-hand side of~\eqref{eq:theo-strong-limiting}. However, it does not seem possible to improve the order of convergence in Theorem~\ref{theo:strong} when $\sigma$ is assumed to be constant using the arguments of the proof: in that case Theorem~\ref{theo:strong} may not be optimal.

Let us also provide a strong error estimate for the second numerical scheme~\eqref{eq:scheme-weak}.
\begin{propo}\label{propo:strong}
Let Assumptions~\ref{ass:f-sigma} and~\ref{ass:init} be satisfied, and let $\bigl(q_n^{\epsilon,\Delta t}\bigr)_{n\ge 0}$ be given by the numerical scheme~\eqref{eq:scheme-weak}. For all $T\in(0,\infty)$, there exists $C(T)\in(0,\infty)$ such that for all $\Delta t=T/N\in(0,\Delta t_0)$, one has
\begin{equation}\label{eq:propo-strong}
\underset{\epsilon\in(0,\epsilon_0)}\sup~\underset{n=0,\ldots,N}\sup~\E[|q_n^{\epsilon,\Delta t}-q^\epsilon(n\Delta t)|^2]\le C(T)(1+|q_0^0|^2)\Delta t.
\end{equation}
\end{propo}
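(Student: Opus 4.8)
The plan is to reduce everything, via Proposition~\ref{propo:SDEtilde}, to a continuous-time comparison. By that proposition $\bigl(q_n^{\epsilon,\Delta t},p_n^{\epsilon,\Delta t}\bigr)=\bigl(\tilde q^{\epsilon,\Delta t}(t_n),\tilde p^{\epsilon,\Delta t}(t_n)\bigr)$, where $\bigl(\tilde q^{\epsilon,\Delta t},\tilde p^{\epsilon,\Delta t}\bigr)$ solves~\eqref{eq:SDEtilde}, i.e.\ the SDE~\eqref{eq:SDE} with the arguments of $f$ and $\sigma$ frozen at the left endpoint $t_n$ on each interval $[t_n,t_{n+1}]$. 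Applying the change of unknowns~\eqref{eq:cdvPQscheme}, i.e.\ setting $\tilde Q^{\epsilon,\Delta t}=\tilde q^{\epsilon,\Delta t}+\epsilon\tilde p^{\epsilon,\Delta t}$ and $\tilde P^{\epsilon,\Delta t}=\epsilon\tilde p^{\epsilon,\Delta t}$, one checks exactly as for~\eqref{eq:SDE-PQ} that, for $t\in[t_n,t_{n+1}]$,
\begin{equation*}
\left\lbrace
\begin{aligned}
d\tilde Q^{\epsilon,\Delta t}(t)&=f\bigl(\tilde q^{\epsilon,\Delta t}(t_n)\bigr)dt+\sigma\bigl(\tilde q^{\epsilon,\Delta t}(t_n)\bigr)d\beta(t)\\
d\tilde P^{\epsilon,\Delta t}(t)&=-\frac{\tilde P^{\epsilon,\Delta t}(t)}{\epsilon^2}dt+f\bigl(\tilde q^{\epsilon,\Delta t}(t_n)\bigr)dt+\sigma\bigl(\tilde q^{\epsilon,\Delta t}(t_n)\bigr)d\beta(t),
\end{aligned}
\right.
\end{equation*}
with $\tilde q^{\epsilon,\Delta t}(t_n)=\tilde Q^{\epsilon,\Delta t}(t_n)-\tilde P^{\epsilon,\Delta t}(t_n)$. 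Writing $\delta Q(t)=\tilde Q^{\epsilon,\Delta t}(t)-Q^\epsilon(t)$ and $\delta P(t)=\tilde P^{\epsilon,\Delta t}(t)-P^\epsilon(t)$, which both vanish at $t=0$, and observing that $q_n^{\epsilon,\Delta t}-q^\epsilon(t_n)=\delta Q(t_n)-\delta P(t_n)$, it suffices to prove $\sup_{t\in[0,T]}\E\bigl[|\delta Q(t)|^2+|\delta P(t)|^2\bigr]\le C(T)\Delta t$ uniformly in $\epsilon\in(0,\epsilon_0)$; the form~\eqref{eq:propo-strong} then follows from Assumption~\ref{ass:init} together with the uniform-in-$\epsilon$ moment bounds of Section~\ref{sec:moments}, which also provide the a priori integrability needed below.

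The key auxiliary estimate is a one-step increment bound for $\tilde q^{\epsilon,\Delta t}$, uniform in $\epsilon$: for all $n$ and $t\in[t_n,t_{n+1}]$, $\E\bigl[|\tilde q^{\epsilon,\Delta t}(t)-\tilde q^{\epsilon,\Delta t}(t_n)|^2\bigr]\le C(T)\Delta t$. To prove it I would start from the explicit formula obtained as in the proof of Proposition~\ref{propo:SDEtilde},
\begin{align*}
\tilde q^{\epsilon,\Delta t}(t)-\tilde q^{\epsilon,\Delta t}(t_n)&=\epsilon\bigl(1-e^{-\frac{t-t_n}{\epsilon^2}}\bigr)\tilde p^{\epsilon,\Delta t}(t_n)+\bigl((t-t_n)-\epsilon^2(1-e^{-\frac{t-t_n}{\epsilon^2}})\bigr)f\bigl(\tilde q^{\epsilon,\Delta t}(t_n)\bigr)\\
&\quad+\sigma\bigl(\tilde q^{\epsilon,\Delta t}(t_n)\bigr)\int_{t_n}^{t}\bigl(1-e^{-\frac{t-s}{\epsilon^2}}\bigr)d\beta(s),
\end{align*}
and use the elementary bounds $\epsilon(1-e^{-h/\epsilon^2})=\sqrt h\,\psi(h/\epsilon^2)$ with $\psi(u)=u^{-1/2}(1-e^{-u})$ bounded on $(0,\infty)$, $0\le h-\epsilon^2(1-e^{-h/\epsilon^2})\le h$, and $\int_{t_n}^t(1-e^{-(t-s)/\epsilon^2})^2ds\le t-t_n$, together with the uniform-in-$\epsilon$ moment bounds for $\tilde q^{\epsilon,\Delta t}(t_n)=q_n^{\epsilon,\Delta t}$ and $\tilde p^{\epsilon,\Delta t}(t_n)=p_n^{\epsilon,\Delta t}$ and the boundedness of $\sigma$ from Assumption~\ref{ass:f-sigma}. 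The essential point is that no negative power of $\epsilon$ survives.

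It then remains to run a coupled Grönwall argument for $a(t)=\E[|\delta Q(t)|^2]$ and $b(t)=\E[|\delta P(t)|^2]$. On $[t_n,t_{n+1}]$ one has $d\delta Q=g_f(t)\,dt+g_\sigma(t)\,d\beta(t)$ and $d\delta P=-\epsilon^{-2}\delta P\,dt+g_f(t)\,dt+g_\sigma(t)\,d\beta(t)$, where $g_f(t)=f(\tilde q^{\epsilon,\Delta t}(t_n))-f(q^\epsilon(t))$ and $g_\sigma(t)=\sigma(\tilde q^{\epsilon,\Delta t}(t_n))-\sigma(q^\epsilon(t))$ for $t\in[t_n,t_{n+1})$. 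Splitting $g_f(t)=\bigl(f(\tilde q^{\epsilon,\Delta t}(t_n))-f(\tilde q^{\epsilon,\Delta t}(t))\bigr)+\bigl(f(\tilde q^{\epsilon,\Delta t}(t))-f(q^\epsilon(t))\bigr)$, and likewise for $g_\sigma$, using that $\tilde q^{\epsilon,\Delta t}(t)-q^\epsilon(t)=\delta Q(t)-\delta P(t)$, the Lipschitz continuity of $f$ and $\sigma$ and the one-step increment bound give $\E[|g_f(t)|^2]+\E[|g_\sigma(t)|^2]\le C(\Delta t+a(t)+b(t))$. Itô's formula applied to $|\delta Q|^2$ then yields $a'(t)\le C(\Delta t+a(t)+b(t))$.

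Finally, Itô's formula applied to $|\delta P|^2$ produces an extra dissipative term $-2\epsilon^{-2}b(t)$; bounding the cross term by Young's inequality, $2\E[\langle\delta P(t),g_f(t)\rangle]\le\epsilon^{-2}b(t)+\epsilon^2\E[|g_f(t)|^2]$, the negative power of $\epsilon$ is absorbed and one obtains $b'(t)\le-\epsilon^{-2}b(t)+(1+\epsilon^2)C(\Delta t+a(t)+b(t))\le C(\Delta t+a(t)+b(t))$ with $C$ depending only on $T$ and $\epsilon_0$, not on $\epsilon$. Adding the two differential inequalities and applying Grönwall's lemma with $a(0)=b(0)=0$ gives $\sup_{t\in[0,T]}(a(t)+b(t))\le C(T)\Delta t$, which combined with the first paragraph proves~\eqref{eq:propo-strong}. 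The main obstacle throughout is to keep every bound uniform in $\epsilon\in(0,\epsilon_0)$: the one-step increment must not degrade as $\epsilon\to0$ (hence $\epsilon(1-e^{-h/\epsilon^2})=O(\sqrt h)$ rather than $O(\epsilon)$), and the stiff contributions in the equation for $\delta P$ must be \emph{absorbed} by $-\epsilon^{-2}\delta P$ rather than estimated crudely so that the Grönwall constant stays bounded. Both reductions work because the scheme~\eqref{eq:scheme-weak} integrates the fast $p$-dynamics exactly on each subinterval, so that $\tilde p^{\epsilon,\Delta t}$ and $p^\epsilon$ share the same transient $e^{-t/\epsilon^2}p_0^\epsilon$; this cancellation removes the $\epsilon^2/(n+1)^2$ term present in Theorem~\ref{theo:strong} and makes the estimate uniform in $n$ as well.
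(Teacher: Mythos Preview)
Your argument is correct and takes a genuinely different route from the paper. The paper does not give a detailed proof of Proposition~\ref{propo:strong}; it only remarks that one can rerun the proof of Theorem~\ref{theo:strong}, namely the discrete-time mild formulations~\eqref{eq:mildQP} and~\eqref{eq:mildQP-strong} (with $e^{-\Delta t/\epsilon^2}$ in place of $(1+\Delta t/\epsilon^2)^{-1}$), the long decomposition into error terms $e_{n,Q,j,k}^{\epsilon,\Delta t}$ and $e_{n,P,j,k}^{\epsilon,\Delta t}$, and a discrete Gronwall lemma. The point made there is that the terms $e_{n,P,0}^{\epsilon,\Delta t}$, $e_{n,P,1,3}^{\epsilon,\Delta t}$, $e_{n,P,1,4}^{\epsilon,\Delta t}$, $e_{n,P,2,3}^{\epsilon,\Delta t}$, $e_{n,P,2,4}^{\epsilon,\Delta t}$ vanish, which removes the $\epsilon^2/(n+1)^2$ contribution.

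You instead pass to the continuous-time auxiliary process of Proposition~\ref{propo:SDEtilde}, apply It\^o's formula to $|\delta Q|^2$ and $|\delta P|^2$, and use the dissipative term $-2\epsilon^{-2}|\delta P|^2$ together with Young's inequality to absorb the stiff cross term. This is cleaner: it bypasses the term-by-term bookkeeping of Theorem~\ref{theo:strong}, the one-step increment estimate you prove is exactly Lemma~\ref{lem:regtilde} of the paper, and your final remark about the shared transient $e^{-t/\epsilon^2}p_0^\epsilon$ explains structurally, rather than by inspection of vanishing terms, why the estimate is uniform in $n$. The price is that your method is specific to scheme~\eqref{eq:scheme-weak}: it relies on the auxiliary process $(\tilde q^{\epsilon,\Delta t},\tilde p^{\epsilon,\Delta t})$ solving an SDE whose $P$-component has the \emph{same} damping $-\epsilon^{-2}\tilde P$ as $P^\epsilon$, which is what makes $\delta P$ satisfy a dissipative equation. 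The paper's approach, by contrast, is designed so that both schemes~\eqref{eq:scheme-strong} and~\eqref{eq:scheme-weak} can be handled by the same template.
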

The proof of Proposition~\ref{propo:strong} would follow the same strategy as the proof of Theorem~\ref{theo:strong}. In fact, compared with the proof of Theorem~\ref{theo:strong} several of the error terms vanish, which is due to replacing $\frac{1}{1+\frac{\Delta t}{\epsilon^2}}$ by $e^{-\frac{\Delta t}{\epsilon^2}}$ in the expressions. This explains why the extra term in~\eqref{eq:theo-strong-precis} does not appear in~\eqref{eq:propo-strong}. The details are omitted.

\subsection{Uniform weak error estimates}\label{sec:main-weak}

In this subsection, let us consider the numerical scheme~\eqref{eq:scheme-weak}. One has the following result concerning the weak error $|\E[\varphi(q_n^{\epsilon,\Delta t})]-\E[\varphi(q^\epsilon(n\Delta t))]|$ for the $q$-component, when $\Delta t\to 0$.

\begin{theo}\label{theo:weak}
Let Assumptions~\ref{ass:f-sigma} and~\ref{ass:init} be satisfied, and let $\bigl(q_n^{\epsilon,\Delta t}\bigr)_{n\ge 0}$ be given by the numerical scheme~\eqref{eq:scheme-weak}. For all $T\in(0,\infty)$, there exists $C(T)\in(0,\infty)$ such that for all $\Delta t=T/N\in(0,\Delta t_0)$, any function $\varphi:\R^d\to \R$ of class $\mathcal{C}^3$ with bounded derivatives of order $1,2,3$, and all $\epsilon\in(0,\epsilon_0)$, one has
\begin{equation}\label{eq:theo-weak}
\underset{n=0,\ldots,N}\sup~\big|\E[\varphi(q_n^{\epsilon,\Delta t})]-\E[\varphi(q^\epsilon(n\Delta t))]\big|\le C(T)\vvvert\varphi\vvvert_3(1+|q_0^0|^2)\bigl(\Delta t+\mathcal{R}(\epsilon,\Delta t)\bigr),
\end{equation}
where the residual error term $\mathcal{R}(\epsilon,\Delta t)$ is defined by
\begin{equation}\label{eq:R}
\mathcal{R}(\epsilon,\Delta t)=\int_{0}^{\Delta t}\frac{\epsilon}{\Delta t}\bigl(1-e^{-\frac{t}{\epsilon^2}}\bigr)dt\ge 0
\end{equation}
and satisfies the following inequality: there exists $C\in(0,\infty)$ such that for all $\Delta t\in(0,\Delta t_0)$ and $\epsilon\in(0,\epsilon_0)$ one has
\begin{equation}\label{eq:Rineq}
\mathcal{R}(\epsilon,\Delta t)\le C\min\bigl(\epsilon,\Delta t^{\frac12},\frac{\Delta t}{\epsilon}\bigr).
\end{equation}
\end{theo}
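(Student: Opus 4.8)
The plan is to use the standard Kolmogorov-equation approach to weak error analysis, but carried out for the limiting SDE~\eqref{eq:limitingSDE} rather than for the two-scale system, and to control the discrepancy via the auxiliary continuous-time process $(\tilde q^{\epsilon,\Delta t},\tilde p^{\epsilon,\Delta t})$ of Proposition~\ref{propo:SDEtilde}. Concretely, I would introduce $u(t,q)=\E[\varphi(q^0(T-t)) \mid q^0(0)=q]$, the solution of the backward Kolmogorov equation associated with~\eqref{eq:limitingSDE}, and invoke Proposition~\ref{propo:Kolmogorov} (stated later in the paper) to get bounds on $\nabla u,\nabla^2 u,\nabla^3 u$ in terms of $\vvvert\varphi\vvvert_3$. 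However, since the scheme~\eqref{eq:scheme-weak} does not naturally live near~\eqref{eq:limitingSDE} when $\epsilon$ is not small, I expect the cleaner route is to work directly with the Kolmogorov equation for the \emph{two-scale} process $(q^\epsilon,p^\epsilon)$ in the variables $(q,p)$: let $v^\epsilon(t,q,p)=\E[\varphi(q^\epsilon(T-t)) \mid (q^\epsilon,p^\epsilon)(0)=(q,p)]$, with the $\epsilon$-dependent generator of~\eqref{eq:SDE}. Then the weak error is telescoped over the $N$ steps as $\E[\varphi(q_N^{\epsilon,\Delta t})]-\E[\varphi(q^\epsilon(T))]=\sum_{n=0}^{N-1}\bigl(\E[v^\epsilon(t_{n+1},q_{n+1}^{\epsilon,\Delta t},p_{n+1}^{\epsilon,\Delta t})]-\E[v^\epsilon(t_n,q_n^{\epsilon,\Delta t},p_n^{\epsilon,\Delta t})]\bigr)$, and on each subinterval $[t_n,t_{n+1}]$ one compares the exact flow of~\eqref{eq:SDE} with the flow of the frozen-coefficient SDE~\eqref{eq:SDEtilde}, whose time-$\Delta t$ map is exactly one step of~\eqref{eq:scheme-weak} by Proposition~\ref{propo:SDEtilde}.

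The key steps, in order, would be: (i) write the one-step error as $\E\bigl[v^\epsilon(t_{n+1},\tilde q(t_{n+1}),\tilde p(t_{n+1}))-v^\epsilon(t_n,\tilde q(t_n),\tilde p(t_n))\bigr]$ with $(\tilde q,\tilde p)$ started from $(q_n^{\epsilon,\Delta t},p_n^{\epsilon,\Delta t})$ at $t_n$; (ii) apply It\^o's formula to $s\mapsto v^\epsilon(s,\tilde q(s),\tilde p(s))$ on $[t_n,t_{n+1}]$; since $v^\epsilon$ solves the Kolmogorov equation $\partial_s v^\epsilon + \mathcal{L}^\epsilon v^\epsilon=0$ for the \emph{exact} generator $\mathcal{L}^\epsilon$, the drift terms cancel up to the difference $(\mathcal{L}^\epsilon_{\tilde q(s)}-\mathcal{L}^\epsilon_{\tilde q(t_n)})v^\epsilon$ coming from replacing $f(\tilde q(s)),\sigma(\tilde q(s))$ by their frozen values $f(\tilde q(t_n)),\sigma(\tilde q(t_n))$; (iii) bound the resulting local error by a Taylor expansion in $\tilde q(s)-\tilde q(t_n)$, using $\|\nabla_q^k v^\epsilon\|$ bounds and moment bounds on the increments $\tilde q(s)-\tilde q(t_n)$ and $\tilde p$ from Section~\ref{sec:moments}; (iv) sum over $n$, absorbing one factor $\Delta t$ per step to get the global $\Delta t$ order, and carefully tracking the $\epsilon$-dependence of the constants so that the leftover non-uniform piece is exactly the residual $\mathcal{R}(\epsilon,\Delta t)$; (v) finally verify the elementary estimate~\eqref{eq:Rineq}: since $1-e^{-t/\epsilon^2}\le\min(1,t/\epsilon^2)$ one gets $\mathcal{R}(\epsilon,\Delta t)\le\min(\epsilon,\Delta t/\epsilon)$ directly, and $\mathcal{R}(\epsilon,\Delta t)\le\sqrt{\epsilon\cdot(\Delta t/\epsilon)}=\Delta t^{1/2}$ by the arithmetic–geometric bound on the min of the first two, which yields the stated $\min(\epsilon,\Delta t^{1/2},\Delta t/\epsilon)$.

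The main obstacle will be step (iii)–(iv): controlling the local weak error \emph{uniformly in $\epsilon$}. The naive Taylor estimate produces terms like $\E[|\tilde q(s)-\tilde q(t_n)|]\|\nabla_q v^\epsilon\|$ and $\E[|\tilde q(s)-\tilde q(t_n)|^2]\|\nabla_q^2 v^\epsilon\|$; while $\tilde q(s)-\tilde q(t_n)=\tfrac1\epsilon\int_{t_n}^{s}\tilde p(r)\,dr$ is formally of size $\Delta t$, the factor $1/\epsilon$ and the fast relaxation of $\tilde p$ interact delicately, and the cancellation that produces the clean $\mathcal{R}(\epsilon,\Delta t)$ rather than a worse $\epsilon$-blowup must come from the explicit structure of~\eqref{eq:SDEtilde} — precisely, from the fact that $\int_{t_n}^{s}\tilde p(r)\,dr$ has a deterministic part proportional to $\epsilon(1-e^{-(s-t_n)/\epsilon^2})p_n$ plus terms of order $\Delta t$, so that the $p$-dependent contributions are genuinely $O(\epsilon(1-e^{-\Delta t/\epsilon^2}))$ in an averaged sense. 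Making this rigorous requires the sharp $\epsilon$-dependent derivative bounds of Proposition~\ref{propo:Kolmogorov} (in particular that $\|\nabla_p v^\epsilon\|$ and mixed $q$–$p$ derivatives carry compensating powers of $\epsilon$ or $\Delta t$), and some care in separating, within the local error, the part that telescopes against a discrete sum giving $\Delta t$ from the part giving $\mathcal{R}(\epsilon,\Delta t)$; I would handle the latter by an Abel summation / discrete integration by parts in $n$, exploiting that $\sum_n \epsilon(1-e^{-\Delta t/\epsilon^2})\cdot\tfrac1\epsilon = N(1-e^{-\Delta t/\epsilon^2})\le T\mathcal{R}(\epsilon,\Delta t)/\Delta t \cdot \text{const}$, matching the definition~\eqref{eq:R}.
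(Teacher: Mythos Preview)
Your proposal is essentially the same route as the paper's proof: Kolmogorov equation $u^\epsilon(t,q,p)$ for the full two-scale process, telescoping sum, It\^o's formula along the auxiliary process~\eqref{eq:SDEtilde}, and Taylor expansion of $f(\tilde q(t_m))-f(\tilde q(t))$ and $a(\tilde q(t_m))-a(\tilde q(t))$. Two small corrections are worth flagging.

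First, in your step~(iii) the relevant derivative bounds are not on $\nabla_q^k v^\epsilon$ but on $\nabla_p u^\epsilon$, $\nabla_p^2 u^\epsilon$, and the mixed $\nabla_q\nabla_p$, $\nabla_q\nabla_p^2$, $\nabla_p^3$ derivatives. After It\^o's formula the only surviving terms are
\[
\frac{1}{\epsilon}\int_{t_m}^{t_{m+1}}\E\bigl[\nabla_p u^\epsilon\cdot\bigl(f(\tilde q(t_m))-f(\tilde q(t))\bigr)\bigr]dt
\quad\text{and}\quad
\frac{1}{\epsilon^2}\int_{t_m}^{t_{m+1}}\E\bigl[\nabla_p^2 u^\epsilon:\bigl(a(\tilde q(t_m))-a(\tilde q(t))\bigr)\bigr]dt,
\]
and Proposition~\ref{propo:Kolmogorov} provides exactly $|\nabla_p u^\epsilon|\le C\epsilon$, $|\nabla_p^2 u^\epsilon|\le C\epsilon^2$, etc., so the $1/\epsilon$, $1/\epsilon^2$ prefactors are cancelled \emph{before} any Taylor expansion. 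You recognise this in your ``main obstacle'' paragraph, but the earlier mention of $\|\nabla_q^k v^\epsilon\|$ is a slip.

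Second, no Abel summation or discrete integration by parts is needed. Once the $\epsilon$-powers are cancelled, the leading Taylor term combined with the explicit increment~\eqref{eq:incrementstilde} and the conditional-expectation argument (killing the stochastic integrals) yields a per-step contribution bounded by $C\int_0^{\Delta t}\epsilon(1-e^{-t/\epsilon^2})\,dt=C\,\Delta t\,\mathcal{R}(\epsilon,\Delta t)$; summing $n$ such terms gives $C\,T\,\mathcal{R}(\epsilon,\Delta t)$ directly. Your proof of~\eqref{eq:Rineq} via $\min(\epsilon,\Delta t/\epsilon)$ and the geometric mean is fine and equivalent to the paper's use of $1-e^{-s}\le(2s)^{1/2}$.
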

The proof of the inequality~\eqref{eq:Rineq} is straightforward: indeed, one has the inequalities $0\le 1-e^{-s}\le 2$ and $0\le 1-e^{-s} \le s$, and therefore $0\le 1-e^{-s}\le (2s)^{\frac12}$, for all $s\ge 0$.  Observe that one obtains the uniform upper bound
\[
\underset{\epsilon\in(0,\epsilon_0)}\sup~\mathcal{R}(\epsilon,\Delta t)\le C\Delta t^{\frac12}
\]
which is optimal: indeed, choosing $\epsilon=\sqrt{\Delta t}$, one has
\[
\frac{1}{\Delta t^{\frac12}}\mathcal{R}(\sqrt{\Delta t},\Delta t)=\frac{1}{\Delta t}\int_{0}^{\Delta t}(1-e^{-\frac{t}{\Delta t}})dt=\int_0^1(1-e^{-s})ds=e^{-1}
\]
for all $\Delta t\in(0,\Delta t_0)$.

Let us state two immediate consequences of Theorem~\eqref{theo:weak}. On the one hand, one obtains the following uniform weak error estimate
\begin{equation}\label{eq:theo-weak-unif}
\underset{\epsilon\in(0,\epsilon_0)}\sup~\underset{n=0,\ldots,N}\sup~\big|\E[\varphi(q_n^{\epsilon,\Delta t})]-\E[\varphi(q^\epsilon(n\Delta t))]\big|\le C(T)\vvvert\varphi\vvvert_3(1+|q_0^0|^2)\Delta t^{\frac12}
\end{equation}
where the order of convergence is equal to $1/2$ (and this cannot be improved when using~\eqref{eq:theo-weak} owing to the observation above). On the other hand, letting $\epsilon\to 0$, one retrieves the standard weak error estimate with order $1$ for the Euler--Maruyama scheme~\eqref{eq:limitingscheme} applied to the SDE~\eqref{eq:limitingSDE}: one has
\begin{equation}\label{eq:theo-weak-limiting}
\underset{n=0,\ldots,N}\sup~\big|\E[\varphi(q_n^{0,\Delta t})]-\E[\varphi(q^0(n\Delta t))]\big|\le C(T)\vvvert\varphi\vvvert_3(1+|q_0^0|^2)\Delta t.
\end{equation}
For a fixed value of $\epsilon$, the weak error estimate~\eqref{eq:theo-weak} also gives order of convergence $1$ with respect $\Delta t$, but the corresponding error estimate is not uniform with respect to $\epsilon$.

Note that it would suffice to apply~\eqref{eq:propo-strong} from Proposition~\ref{propo:strong} above to obtain the uniform weak error estimate~\eqref{eq:theo-weak-unif}, but to retrieve~\eqref{eq:theo-weak-limiting} one needs the refined error analysis which gives~\eqref{eq:theo-weak}. Even if the order of convergence in the uniform weak error estimate~\eqref{eq:theo-weak-unif} is $1/2$, one has the following error estimate
\begin{equation}\label{eq:theo-weak-bis}
\underset{n=0,\ldots,N}\sup~\big|\E[\varphi(q_n^{\epsilon,\Delta t})]-\E[\varphi(q^\epsilon(n\Delta t))]\big|\le C(T)\vvvert\varphi\vvvert_3(1+|q_0^0|^2)\bigl(\Delta t+\epsilon\bigr),
\end{equation}
owing to~\eqref{eq:theo-weak} and~\eqref{eq:Rineq}, which is relevant in situations where $\epsilon$ is negligible compared with $\Delta t$.

\section{Moment bounds}\label{sec:moments}

This section is devoted to state and prove moment bounds, which are uniform with respect to $\epsilon\in(0,\epsilon_0)$ and $\Delta t\in(0,\Delta t_0)$, for the solutions of~\eqref{eq:SDE}, of~\eqref{eq:scheme-strong} and of~\eqref{eq:scheme-weak}. The proofs are based on the changes of unknowns introduced in Sections~\eqref{sec:setting-SDE} and~\eqref{sec:setting-schemes}. Even if the arguments are mostly elementary, it is worth giving full details for completeness. The proofs of Propositions~\ref{propo:moment-scheme-strong} and~\ref{propo:moment-scheme-weak} are variants of the proof of Proposition~\ref{propo:moment-SDE} in discrete-time situations. After proving the moment bounds, the proof of Proposition~\ref{propo:cv} is provided.

\subsection{Moment bounds for the SDE system}

\begin{propo}\label{propo:moment-SDE}
Let Assumption~\ref{ass:f-sigma} be satisfied. For all $T\in(0,\infty)$, there exists $C(T)\in(0,\infty)$ such that for all $\epsilon\in(0,\epsilon_0)$, one has
\begin{equation}\label{eq:moment-SDE}
\underset{0\le t\le T}\sup~\Bigl(\E[|q^\epsilon(t)|^2]+\E[|p^\epsilon(t)|^2]\Bigr)\le C(T)\bigl(1+|q_0^\epsilon|^2+|p_0^\epsilon|^2\bigr).
\end{equation}
\end{propo}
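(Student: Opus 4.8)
The plan is to work with the change of unknowns $(Q^\epsilon, P^\epsilon)$ defined in~\eqref{eq:cdvPQ}, for which the SDE system~\eqref{eq:SDE-PQ} has coefficients that are bounded (using Assumption~\ref{ass:f-sigma}, since $f$ has bounded derivatives hence at most linear growth, and $\sigma$ is bounded), and for which the only singular term $-P^\epsilon/\epsilon^2$ appears solely in the $P^\epsilon$-equation with a favorable sign. First I would establish a moment bound on $P^\epsilon(t)$. Using the mild formulation
\[
P^\epsilon(t)=e^{-\frac{t}{\epsilon^2}}P^\epsilon(0)+\int_0^t e^{-\frac{t-s}{\epsilon^2}}f(Q^\epsilon(s)-P^\epsilon(s))\,ds+\int_0^t e^{-\frac{t-s}{\epsilon^2}}\sigma(Q^\epsilon(s)-P^\epsilon(s))\,d\beta(s),
\]
with $P^\epsilon(0)=\epsilon p_0^\epsilon$, I would take $\E[|\cdot|^2]$, use the elementary inequality $|x+y+z|^2\le 3(|x|^2+|y|^2+|z|^2)$, the Itô isometry for the stochastic integral, and the Cauchy--Schwarz inequality $\bigl(\int_0^t e^{-\frac{t-s}{\epsilon^2}}|f(\cdots)|\,ds\bigr)^2 \le \bigl(\int_0^t e^{-\frac{t-s}{\epsilon^2}}ds\bigr)\bigl(\int_0^t e^{-\frac{t-s}{\epsilon^2}}|f(\cdots)|^2 ds\bigr)$ for the drift term. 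Since $\int_0^t e^{-\frac{t-s}{\epsilon^2}}ds = \epsilon^2(1-e^{-t/\epsilon^2})\le \epsilon^2$ and $\int_0^t e^{-2\frac{t-s}{\epsilon^2}}ds \le \epsilon^2/2$, and since $|f(q)|\le C(1+|q|)$ and $\|\sigma\|_\infty<\infty$, one gets, with $|Q^\epsilon(s)-P^\epsilon(s)|^2\le 2(|Q^\epsilon(s)|^2+|P^\epsilon(s)|^2)$,
\[
\E[|P^\epsilon(t)|^2]\le C\epsilon^2|p_0^\epsilon|^2 + C\epsilon^2\Bigl(1+\sup_{0\le s\le t}\E[|Q^\epsilon(s)|^2]+\sup_{0\le s\le t}\E[|P^\epsilon(s)|^2]\Bigr).
\]
Taking $\epsilon_0$ small enough that $C\epsilon_0^2\le 1/2$ would absorb the $\E[|P^\epsilon|^2]$ term on the right (and the constant depends only on $\epsilon_0$, $T$, $f$, $\sigma$), yielding $\sup_{0\le t\le T}\E[|P^\epsilon(t)|^2]\le C(T)\bigl(1+|p_0^\epsilon|^2+\sup_{0\le t\le T}\E[|Q^\epsilon(t)|^2]\bigr)$.

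Next I would bound $Q^\epsilon(t)$. From $Q^\epsilon(t)=Q^\epsilon(0)+\int_0^t f(Q^\epsilon(s)-P^\epsilon(s))\,ds+\int_0^t\sigma(Q^\epsilon(s)-P^\epsilon(s))\,d\beta(s)$ with $Q^\epsilon(0)=q_0^\epsilon+\epsilon p_0^\epsilon$, the same manipulations (Cauchy--Schwarz in time on $[0,t]\subset[0,T]$ for the drift, Itô isometry for the diffusion, linear growth of $f$, boundedness of $\sigma$) give
\[
\E[|Q^\epsilon(t)|^2]\le C\bigl(|q_0^\epsilon|^2+\epsilon_0^2|p_0^\epsilon|^2\bigr) + C(T)\int_0^t\Bigl(1+\E[|Q^\epsilon(s)|^2]+\E[|P^\epsilon(s)|^2]\Bigr)\,ds.
\]
Substituting the bound for $\E[|P^\epsilon(s)|^2]$ obtained above (which itself involves $\sup_{0\le u\le s}\E[|Q^\epsilon(u)|^2]$) and setting $\psi(t)=\sup_{0\le s\le t}\E[|Q^\epsilon(s)|^2]$, one arrives at an integral inequality $\psi(t)\le C(1+|q_0^\epsilon|^2+|p_0^\epsilon|^2)+C(T)\int_0^t\psi(s)\,ds$; Grönwall's lemma then gives $\sup_{0\le t\le T}\E[|Q^\epsilon(t)|^2]\le C(T)(1+|q_0^\epsilon|^2+|p_0^\epsilon|^2)$, and plugging back, the same bound for $\sup_{0\le t\le T}\E[|P^\epsilon(t)|^2]$. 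Finally, since $q^\epsilon(t)=Q^\epsilon(t)-P^\epsilon(t)$ and $p^\epsilon(t)=P^\epsilon(t)/\epsilon$, one has $\E[|q^\epsilon(t)|^2]\le 2\E[|Q^\epsilon(t)|^2]+2\E[|P^\epsilon(t)|^2]$, which is controlled; for $\E[|p^\epsilon(t)|^2]=\E[|P^\epsilon(t)|^2]/\epsilon^2$, I would instead use the factor $\epsilon^2$ that was explicitly kept in the bound on $\E[|P^\epsilon(t)|^2]$ (namely $\E[|P^\epsilon(t)|^2]\le C(T)\epsilon^2(1+|q_0^\epsilon|^2+|p_0^\epsilon|^2)$, since every term in the estimate for $P^\epsilon$ carried an explicit $\epsilon^2$ once the Grönwall bound for $Q^\epsilon$ is inserted), so that $\E[|p^\epsilon(t)|^2]\le C(T)(1+|q_0^\epsilon|^2+|p_0^\epsilon|^2)$ uniformly in $\epsilon$. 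This yields~\eqref{eq:moment-SDE}.

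The only delicate point — and the reason for working with $(Q^\epsilon,P^\epsilon)$ rather than directly with $(q^\epsilon,p^\epsilon)$ — is that a naive estimate on $p^\epsilon$ through its own mild formulation~\eqref{eq:SDE-mild} produces factors $1/\epsilon$ in front of the integrals that do not cancel, because the prefactor $1/\epsilon$ in $dp^\epsilon$ is only partially compensated by the $e^{-(t-s)/\epsilon^2}$ kernel (one gains $\epsilon^2$ from $\int e^{-(t-s)/\epsilon^2}ds$ against a $1/\epsilon^2$, giving a net $O(1)$, but the cross-terms and the coupling $q^\epsilon = q_0^\epsilon + \frac1\epsilon\int_0^t p^\epsilon$ reintroduce a genuine $1/\epsilon$ that is not integrable against the exponential). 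The change of unknowns removes this: $Q^\epsilon$ and $P^\epsilon$ satisfy equations with $O(1)$ coefficients and no singular prefactor except the dissipative $-P^\epsilon/\epsilon^2$, so all the $\epsilon$-dependence is benign. Keeping track of the explicit $\epsilon^2$ in the $P^\epsilon$-estimate is what makes the uniform-in-$\epsilon$ control of $\E[|p^\epsilon(t)|^2]$ possible, and it must be done carefully since that is precisely the bound that will be needed (together with Assumption~\ref{ass:init}) in the proofs of Proposition~\ref{propo:cv} and Theorem~\ref{theo:strong}.
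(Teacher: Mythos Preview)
Your approach follows the paper's in spirit: both use the change of unknowns $(Q^\epsilon,P^\epsilon)$ and their mild formulations, then recover $q^\epsilon$ and $p^\epsilon$. The difference is the order of operations. The paper bounds $\E[|Q^\epsilon(t)|^2]+\E[|P^\epsilon(t)|^2]$ \emph{jointly} by a single Gronwall argument, using only the crude estimate $e^{-(t-s)/\epsilon^2}\le 1$ (so no $\epsilon^2$ factor is kept, and no absorption is needed); it then bounds $p^\epsilon$ directly from the mild formula~\eqref{eq:SDE-mild}, using the now-available bound on $q^\epsilon$ together with $\frac{1}{\epsilon^2}\int_0^t e^{-2(t-s)/\epsilon^2}\,ds\le\tfrac12$.

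Your two-step route (bound $P^\epsilon$ first with an explicit $\epsilon^2$, absorb, then Gronwall on $Q^\epsilon$) is workable, but the absorption step ``taking $\epsilon_0$ small enough that $C\epsilon_0^2\le 1/2$'' imposes a constraint on $\epsilon_0$ that the statement does not allow: $\epsilon_0$ is a given, arbitrary parameter. This is easily repaired---for $\epsilon$ bounded away from zero the system~\eqref{eq:SDE} has coefficients bounded in terms of $\epsilon_0$ and standard moment bounds apply---but it is an extra case distinction that the paper's joint-Gronwall argument sidesteps. Also, your closing remark that the mild formula for $p^\epsilon$ is unusable is not quite right: the paper \emph{does} use it, and the $1/\epsilon$ factors there cancel against the exponential kernel; what actually fails without the change of unknowns is the $q^\epsilon$ estimate, because of the uncontrolled $\frac{1}{\epsilon}\int_0^t p^\epsilon(s)\,ds$.
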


\begin{proof}[Proof of Proposition~\ref{propo:moment-SDE}]
Let $\bigl(Q^\epsilon(t),P^\epsilon(t)\bigr)_{t\ge 0}$ be the solution of the SDE system~\eqref{eq:SDE-PQ}. For all $t\in[0,T]$, one has
\begin{equation}\label{eq:mildQP}
\left\lbrace
\begin{aligned}
Q^\epsilon(t)&=Q^\epsilon(0)+\int_0^t f\bigl(Q^\epsilon(s)-P^\epsilon(s)\bigr)ds+\int_0^t \sigma\bigl(Q^\epsilon(s)-P^\epsilon(s)\bigr)d\beta(s)\\
P^\epsilon(t)&=e^{-\frac{t}{\epsilon^2}}P^\epsilon(0)+\int_0^t e^{-\frac{t-s}{\epsilon^2}}f\bigl(Q^\epsilon(s)-P^\epsilon(s)\bigr)ds+\int_0^te^{-\frac{t-s}{\epsilon^2}}\sigma\bigl(Q^\epsilon(s)-P^\epsilon(s)\bigr)d\beta(s).
\end{aligned}
\right.
\end{equation}
Using It\^o's isometry formula, and the Lipschitz continuity property of the mappings $f$ and $\sigma$ (Assumption~\ref{ass:f-sigma}) one obtains the inequality
\[
\E[|Q^\epsilon(t)|^2]+\E[|P^\epsilon(t)|^2]\le C(T)\Bigl(1+\E[|Q^\epsilon(0)|^2]+\E[|P^\epsilon(0)|^2+\int_0^t\bigl(\E[|Q^\epsilon(s)|^2]+\E[|P^\epsilon(s)|^2]\bigr)ds\Bigr).
\]
for all $t\in[0,T]$. Since $Q^\epsilon(0)=q_0^\epsilon+\epsilon p_0^\epsilon$ and $P^\epsilon(0)=\epsilon p_0^\epsilon$, applying Gronwall's lemma yields the moment bound
\[
\underset{0\le t\le T}\sup~\bigl(\E[|Q^\epsilon(t)|^2]+\E[|P^\epsilon(t)|^2]\bigr)\le C(T)\Bigl(1+|q_0^\epsilon|^2+\|p_0^\epsilon|^2\Bigr).
\]
Using the identity $q^\epsilon(t)=Q^\epsilon(t)-P^\epsilon(t)$ then gives
\[
\underset{0\le t\le T}\sup~\E[|q^\epsilon(t)|^2]\le C(T)\Bigl(1+|q_0^\epsilon|^2+|p_0^\epsilon|^2\Bigr).
\]
Using the second identity from~\eqref{eq:SDE-mild} and writing $Q^\epsilon(t)-P^\epsilon(t)=q^\epsilon(t)$,  applying It\^o's isometry formula, and using the Lipschitz continuity properties of $f$ and $\sigma$ and the moment bounds for $q^\epsilon(t)$ obtained above, one then obtains, for all $t\in[0,T]$,
\begin{align*}
\E[|p^\epsilon(t)|^2]&\le C|p_0^\epsilon|^2+\frac{CT}{\epsilon^2}\int_{0}^{t}e^{-\frac{2(t-s)}{\epsilon^2}}\bigl(1+\E[|q^\epsilon(s)|^2]\bigr)ds\\
&\le C(T)\Bigl(1+|q_0^\epsilon|^2+|p_0^\epsilon|^2\Bigr)\Bigl(1+\frac{1}{\epsilon^2}\int_{0}^{t}e^{-\frac{2(t-s)}{\epsilon^2}}ds\Bigr)\\
&\le C(T)\Bigl(1+|q_0^\epsilon|^2+|p_0^\epsilon|^2\Bigr),
\end{align*}
which gives the required moment bounds for $p^\epsilon(t)$, using the identity
\[
\frac{1}{\epsilon^2}\int_0^\infty e^{-\frac{2r}{\epsilon^2}}dr=\frac12.
\]
This concludes the proof of Proposition~\ref{propo:moment-SDE}.
\end{proof}

\subsection{Moment bounds for the first numerical scheme}

\begin{propo}\label{propo:moment-scheme-strong}
Let $\bigl(q_n^{\epsilon,\Delta t},p_n^{\epsilon,\Delta t}\bigr)_{n=0,\ldots,N}$ be given by the numerical scheme~\eqref{eq:scheme-strong}. Let Assumption~\ref{ass:f-sigma} be satisfied. For all $T\in(0,\infty)$, there exists $C(T)\in(0,\infty)$ such that for all $\epsilon\in(0,\epsilon_0)$ and $\Delta t\in(0,\Delta t_0)$, one has
\begin{equation}\label{eq:moment-scheme-strong}
\underset{n=0,\ldots,N}\sup~\Bigl(\E[|q_n^{\epsilon,\Delta t}|^2]+\E[|p_n^{\epsilon,\Delta t}|^2]\Bigr)\le C(T)\bigl(1+|q_0^\epsilon|^2+|p_0^\epsilon|^2\bigr).
\end{equation}
\end{propo}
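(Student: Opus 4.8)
The plan is to mimic the proof of Proposition~\ref{propo:moment-SDE}, working with the auxiliary unknowns $\bigl(Q_n^{\epsilon,\Delta t},P_n^{\epsilon,\Delta t}\bigr)$ from~\eqref{eq:cdvPQscheme}, which satisfy the recursion~\eqref{eq:scheme-strong-QP}. Write $\mu=\frac{1}{1+\frac{\Delta t}{\epsilon^2}}=\frac{\epsilon^2}{\epsilon^2+\Delta t}\in(0,1)$, and let $\bigl(\mathcal{F}_n\bigr)_{0\le n\le N}$ denote the filtration generated by the increments $\Delta\beta_0,\ldots,\Delta\beta_{n-1}$, so that $\Delta\beta_n$ is independent of $\mathcal{F}_n$ with $\E[\Delta\beta_n]=0$ and $\E[|\Delta\beta_n|^2]=d\,\Delta t$. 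All quantities $Q_n^{\epsilon,\Delta t}$, $P_n^{\epsilon,\Delta t}$, $q_n^{\epsilon,\Delta t}$, $f(q_n^{\epsilon,\Delta t})$, $\sigma(q_n^{\epsilon,\Delta t})$ are $\mathcal{F}_n$-measurable, hence the cross terms involving $\Delta\beta_n$ will vanish when expectations are taken.

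First I would establish a one-step bound for $u_n:=\E[|Q_n^{\epsilon,\Delta t}|^2]+\E[|P_n^{\epsilon,\Delta t}|^2]$. Expanding the squares in~\eqref{eq:scheme-strong-QP}, taking expectations, using that the cross terms involving $\Delta\beta_n$ vanish by independence and centering, the linear growth of $f$ (a consequence of Assumption~\ref{ass:f-sigma}: $|f(q)|\le C(1+|q|)$), the boundedness of $\sigma$, the bound $0<\mu<1$, and $\Delta t\le\Delta t_0$, one obtains
\[
u_{n+1}\le (1+C(T)\Delta t)u_n+C(T)\Delta t.
\]
The discrete Gronwall lemma then yields $\sup_{0\le n\le N}u_n\le C(T)(1+u_0)$, and since $Q_0^{\epsilon,\Delta t}=q_0^\epsilon+\epsilon p_0^\epsilon$ and $P_0^{\epsilon,\Delta t}=\epsilon p_0^\epsilon$ with $\epsilon<\epsilon_0$, this gives
\[
\underset{n=0,\ldots,N}\sup~\bigl(\E[|Q_n^{\epsilon,\Delta t}|^2]+\E[|P_n^{\epsilon,\Delta t}|^2]\bigr)\le C(T)\bigl(1+|q_0^\epsilon|^2+|p_0^\epsilon|^2\bigr).
\]
Using the identity $q_n^{\epsilon,\Delta t}=Q_n^{\epsilon,\Delta t}-P_n^{\epsilon,\Delta t}$ immediately produces the claimed bound for $\E[|q_n^{\epsilon,\Delta t}|^2]$.

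The main obstacle is the bound for $\E[|p_n^{\epsilon,\Delta t}|^2]$: the naive identity $p_n^{\epsilon,\Delta t}=P_n^{\epsilon,\Delta t}/\epsilon$ only yields a bound of order $\epsilon^{-2}$, which blows up. Instead, as in the treatment of $p^\epsilon(t)$ in the proof of Proposition~\ref{propo:moment-SDE}, I would solve the recursion for $p_n^{\epsilon,\Delta t}$ explicitly from~\eqref{eq:scheme-strong},
\[
p_n^{\epsilon,\Delta t}=\mu^n p_0^\epsilon+\sum_{k=0}^{n-1}\mu^{n-k}\Bigl(\frac{\Delta t\,f(q_k^{\epsilon,\Delta t})}{\epsilon}+\frac{\sigma(q_k^{\epsilon,\Delta t})}{\epsilon}\Delta\beta_k\Bigr),
\]
take squared norms and expectations, bound the martingale part by the discrete It\^o isometry, $\E\bigl[\big|\sum_{k}\mu^{n-k}\frac{\sigma(q_k^{\epsilon,\Delta t})}{\epsilon}\Delta\beta_k\big|^2\bigr]=\sum_k\mu^{2(n-k)}\frac{1}{\epsilon^2}\E[|\sigma(q_k^{\epsilon,\Delta t})\Delta\beta_k|^2]\le \frac{C\Delta t}{\epsilon^2}\sum_{j=1}^n\mu^{2j}$, and the drift part by the Cauchy--Schwarz inequality together with the bound on $\E[|q_k^{\epsilon,\Delta t}|^2]$ already established. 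The key computation is the geometric-sum estimate
\[
\sum_{j=1}^{n}\mu^{2j}\le \sum_{j=1}^{n}\mu^{j}\le \frac{\mu}{1-\mu}=\frac{\epsilon^2}{\Delta t},
\]
which is the discrete analogue of the identity $\frac{1}{\epsilon^2}\int_0^\infty e^{-2r/\epsilon^2}\dd r=\frac12$ used in the SDE case, and which produces exactly the cancellation of the $\epsilon^{-2}$ factors: the noise term is bounded by $\frac{C\Delta t}{\epsilon^2}\cdot\frac{\epsilon^2}{\Delta t}=C$, and the drift term by $\frac{\epsilon^2}{\Delta t}\cdot\frac{\Delta t^2}{\epsilon^2}\cdot\frac{\epsilon^2}{\Delta t}\,C(T)(1+|q_0^\epsilon|^2+|p_0^\epsilon|^2)=\epsilon^2C(T)(1+|q_0^\epsilon|^2+|p_0^\epsilon|^2)\le \epsilon_0^2C(T)(1+|q_0^\epsilon|^2+|p_0^\epsilon|^2)$. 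Combined with $\mu^{2n}|p_0^\epsilon|^2\le|p_0^\epsilon|^2$, this gives the required uniform bound for $\E[|p_n^{\epsilon,\Delta t}|^2]$ and completes the proof.
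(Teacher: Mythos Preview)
Your proposal is correct and follows essentially the same strategy as the paper: first bound $\E[|Q_n^{\epsilon,\Delta t}|^2]+\E[|P_n^{\epsilon,\Delta t}|^2]$ via a discrete Gronwall argument (the paper uses the summed ``mild'' form~\eqref{eq:mildQP-strong} rather than your one-step recursion, but the two are equivalent), deduce the bound on $q_n^{\epsilon,\Delta t}$, and then treat $p_n^{\epsilon,\Delta t}$ via its explicit formula together with the geometric-sum cancellation $\frac{\Delta t}{\epsilon^2}\sum_{j\ge 1}\mu^{2j}\le C$. The only cosmetic difference is that the paper bounds the drift sum by the crude Cauchy--Schwarz $|\sum a_k|^2\le N\sum|a_k|^2$ (picking up a harmless factor $T$), whereas you use the weighted Cauchy--Schwarz $|\sum \mu^{j}b_j|^2\le(\sum\mu^j)(\sum\mu^j|b_j|^2)$; both yield the same uniform bound.
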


\begin{proof}[Proof of Proposition~\ref{propo:moment-scheme-strong}]
Let $\bigl(Q_n^{\epsilon,\Delta t},P_n^{\epsilon,\Delta t}\bigr)_{n=0,\ldots,N}$ be given by~\eqref{eq:scheme-strong-QP}. One has the identities
\begin{equation}\label{eq:mildQP-strong}
\left\lbrace
\begin{aligned}
Q_n^{\epsilon,\Delta t}&=Q_0^{\epsilon,\Delta t}+\Delta t\sum_{k=0}^{n-1}f(Q_k^{\epsilon,\Delta t}-P_k^{\epsilon,\Delta t})+\sum_{k=0}^{n-1}\sigma(Q_k^{\epsilon,\Delta t}-P_k^{\epsilon,\Delta t})\Delta\beta_k\\
P_n^{\epsilon,\Delta t}&=\frac{1}{(1+\frac{\Delta t}{\epsilon^2})^n}P_0^{\epsilon,\Delta t}+\Delta t\sum_{k=0}^{n-1}\frac{1}{(1+\frac{\Delta t}{\epsilon^2})^{n-k}}f(Q_k^{\epsilon,\Delta t}-P_k^{\epsilon,\Delta t})+\sum_{k=0}^{n-1}\frac{1}{(1+\frac{\Delta t}{\epsilon^2})^{n-k}}\sigma(Q_k^{\epsilon,\Delta t}-P_k^{\epsilon,\Delta t})\Delta\beta_k
\end{aligned}
\right.
\end{equation}
for all $n\in\{0,\ldots,N\}$. Using It\^o's isometry formula and the Lipschitz continuity property of the mappings $f$ and $\sigma$ (Assumption~\ref{ass:f-sigma}), one obtains the inequality
\[
\E[|Q_n^{\epsilon,\Delta t}|^2]+\E[|P_n^{\epsilon,\Delta t}|^2]\le C(T)\Bigl(1+\E[|Q_0^{\epsilon,\Delta t}|^2]+\E[|P_0^{\epsilon,\Delta t}|^2]+\Delta t\sum_{k=0}^{n-1}\bigl(\E[|Q_k^{\epsilon,\Delta t}|^2]+\E[|P_k^{\epsilon,\Delta t}|^2]\bigr)\Bigr)
\]
for all $n\in\{0,\ldots,N\}$. Since $Q_0^{\epsilon,\Delta t}=q_0^\epsilon+\epsilon p_0^\epsilon$ and $P_0^{\epsilon,\Delta t}=\epsilon p_0^\epsilon$, applying discrete Gronwall's lemma yields the moment bound
\[
\underset{n=0,\ldots,N}\sup~\Bigl(\E[|Q_n^{\epsilon,\Delta t}|^2]+\E[|P_n^{\epsilon,\Delta t}|^2]\Bigr)\le C(T)\bigl(1+|q_0^\epsilon|^2+|p_0^\epsilon|^2\bigr).
\]
Using the identity $q_n^{\epsilon,\Delta t}=Q_n^{\epsilon,\Delta t}-P_n^{\epsilon,\Delta t}$ then gives
\[
\underset{n=0,\ldots,N}\sup~\E[|q_n^{\epsilon,\Delta t}|^2]\le C(T)\bigl(1+|q_0^\epsilon|^2+|p_0^\epsilon|^2\bigr).
\]
Using the identity $p_n^{\epsilon,\Delta t}=P_n^{\epsilon,\Delta t}/\epsilon$, one has
\[
p_n^{\epsilon,\Delta t}=\frac{1}{(1+\frac{\Delta t}{\epsilon^2})^n}p_0^{\epsilon}+\frac{\Delta t}{\epsilon}\sum_{k=0}^{n-1}\frac{1}{(1+\frac{\Delta t}{\epsilon^2})^{n-k}}f(q_k^{\epsilon,\Delta t})+\frac{1}{\epsilon}\sum_{k=0}^{n-1}\frac{1}{(1+\frac{\Delta t}{\epsilon^2})^{n-k}}\sigma(q_k^{\epsilon,\Delta t})\Delta\beta_k.
\]
Applying It\^o's isometry formula, and using the Lipschitz continuity properties of $f$ and $\sigma$ and the moment bounds for $q_n^{\epsilon,\Delta t}$ obtained above, one has, for all $n\in\{0,\ldots,N\}$
\begin{align*}
\E[|p_n^{\epsilon,\Delta t}|^2]&\le C|p_0^\epsilon|^2+\frac{CT\Delta t}{\epsilon^2}\sum_{k=0}^{n-1}\frac{1}{(1+\frac{\Delta t}{\epsilon^2})^{2(n-k)}}\bigl(1+\E[|q_k^{\epsilon,\Delta t}|^2]\bigr)\\
&\le C(T)\bigl(1+|q_0^\epsilon|^2+|p_0^\epsilon|^2\bigr)\bigl(1+\frac{\Delta t}{\epsilon^2}\sum_{k=0}^{n-1}\frac{1}{(1+\frac{\Delta t}{\epsilon^2})^{2(n-k)}}\bigr)\\
&\le C(T)\bigl(1+|q_0^\epsilon|^2+|p_0^\epsilon|^2\bigr),
\end{align*}
which gives the required moment bounds for $p_n^\epsilon$, using the inequality
\[
\tau\sum_{\ell=1}^{\infty}\frac{1}{(1+\tau)^{2\ell}}=\frac{1}{2+\tau}\le \frac12
\]
with $\tau=\Delta t/\epsilon^2\ge 0$. This concludes the proof of Proposition~\ref{propo:moment-scheme-strong}.
\end{proof}

\subsection{Moment bounds for the second numerical scheme}

\begin{propo}\label{propo:moment-scheme-weak}
Let $\bigl(q_n^{\epsilon,\Delta t},p_n^{\epsilon,\Delta t}\bigr)_{n=0,\ldots,N}$ be given by the numerical scheme~\eqref{eq:scheme-weak}. Let Assumption~\ref{ass:f-sigma} be satisfied. For all $T\in(0,\infty)$, there exists $C(T)\in(0,\infty)$ such that for all $\epsilon\in(0,\epsilon_0)$ and $\Delta t\in(0,\Delta t_0)$, one has
\begin{equation}\label{eq:moment-scheme-weak}
\underset{n=0,\ldots,N}\sup~\Bigl(\E[|q_n^{\epsilon,\Delta t}|^2]+\E[|p_n^{\epsilon,\Delta t}|^2]\Bigr)\le C(T)\bigl(1+|q_0^\epsilon|^2+|p_0^\epsilon|^2\bigr).
\end{equation}
\end{propo}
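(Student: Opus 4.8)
The plan is to follow the pattern of the proof of Proposition~\ref{propo:moment-scheme-strong}. The only structural changes are that the discrete resolvent factors $\bigl(1+\tfrac{\Delta t}{\epsilon^2}\bigr)^{-(n-k)}$ are replaced by the exponential weights $e^{-(n-1-k)\frac{\Delta t}{\epsilon^2}}$ appearing in~\eqref{eq:scheme-weak-QP}, and that the role of the Brownian increments $\Delta\beta_k$ is played by the Gaussian random variables $I_k^\epsilon=\int_{t_k}^{t_{k+1}}e^{-\frac{t_{k+1}-s}{\epsilon^2}}d\beta(s)$, which are centered, independent of $\mathcal{F}_{t_k}$, mutually independent (they involve the increments of $\beta$ over disjoint time intervals), and whose second moment $\frac{\epsilon^2}{2}(1-e^{-\frac{2\Delta t}{\epsilon^2}})$ has been computed above. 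First I would pass to the auxiliary unknowns $\bigl(Q_n^{\epsilon,\Delta t},P_n^{\epsilon,\Delta t}\bigr)$ from~\eqref{eq:cdvPQscheme}, which satisfy~\eqref{eq:scheme-weak-QP}, and iterate the recursion to obtain, with the shorthand $q_k^{\epsilon,\Delta t}=Q_k^{\epsilon,\Delta t}-P_k^{\epsilon,\Delta t}$,
\[
Q_n^{\epsilon,\Delta t}=Q_0^{\epsilon,\Delta t}+\Delta t\sum_{k=0}^{n-1}f(q_k^{\epsilon,\Delta t})+\sum_{k=0}^{n-1}\sigma(q_k^{\epsilon,\Delta t})\Delta\beta_k,
\]
\[
P_n^{\epsilon,\Delta t}=e^{-n\frac{\Delta t}{\epsilon^2}}P_0^{\epsilon,\Delta t}+\epsilon^2(1-e^{-\frac{\Delta t}{\epsilon^2}})\sum_{k=0}^{n-1}e^{-(n-1-k)\frac{\Delta t}{\epsilon^2}}f(q_k^{\epsilon,\Delta t})+\sum_{k=0}^{n-1}e^{-(n-1-k)\frac{\Delta t}{\epsilon^2}}\sigma(q_k^{\epsilon,\Delta t})I_k^\epsilon.
\]

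Squaring, taking expectations, using that the cross terms involving the $I_k^\epsilon$ vanish (by their mutual independence and centering, together with a conditioning argument), the Cauchy--Schwarz inequality, Assumption~\ref{ass:f-sigma} (linear growth of $f$ and boundedness of $\sigma$), the elementary bounds $\epsilon^2(1-e^{-\frac{\Delta t}{\epsilon^2}})\le\Delta t$ and $\sum_{j\ge0}e^{-2j\frac{\Delta t}{\epsilon^2}}=(1-e^{-\frac{2\Delta t}{\epsilon^2}})^{-1}$, and $Q_0^{\epsilon,\Delta t}=q_0^\epsilon+\epsilon p_0^\epsilon$, $P_0^{\epsilon,\Delta t}=\epsilon p_0^\epsilon$ with $\epsilon<\epsilon_0$, I obtain
\[
\E[|Q_n^{\epsilon,\Delta t}|^2]+\E[|P_n^{\epsilon,\Delta t}|^2]\le C(T)\Bigl(1+|q_0^\epsilon|^2+|p_0^\epsilon|^2+\Delta t\sum_{k=0}^{n-1}\bigl(\E[|Q_k^{\epsilon,\Delta t}|^2]+\E[|P_k^{\epsilon,\Delta t}|^2]\bigr)\Bigr).
\]
The discrete Gronwall lemma then yields $\sup_{0\le n\le N}\bigl(\E[|Q_n^{\epsilon,\Delta t}|^2]+\E[|P_n^{\epsilon,\Delta t}|^2]\bigr)\le C(T)(1+|q_0^\epsilon|^2+|p_0^\epsilon|^2)$, hence the bound on $\E[|q_n^{\epsilon,\Delta t}|^2]$ through $q_n^{\epsilon,\Delta t}=Q_n^{\epsilon,\Delta t}-P_n^{\epsilon,\Delta t}$.

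As in the proof of Proposition~\ref{propo:moment-scheme-strong}, the step requiring care is the bound on $\E[|p_n^{\epsilon,\Delta t}|^2]$: one cannot simply divide the bound on $\E[|P_n^{\epsilon,\Delta t}|^2]$ by $\epsilon^2$, since the latter contains an $O(1)$ contribution (coming from $\E[|Q_0^{\epsilon,\Delta t}|^2]$), so the argument must be run directly on the $p$-equation in~\eqref{eq:scheme-weak}. Iterating it gives
\[
p_n^{\epsilon,\Delta t}=e^{-n\frac{\Delta t}{\epsilon^2}}p_0^\epsilon+\epsilon(1-e^{-\frac{\Delta t}{\epsilon^2}})\sum_{k=0}^{n-1}e^{-(n-1-k)\frac{\Delta t}{\epsilon^2}}f(q_k^{\epsilon,\Delta t})+\frac1\epsilon\sum_{k=0}^{n-1}e^{-(n-1-k)\frac{\Delta t}{\epsilon^2}}\sigma(q_k^{\epsilon,\Delta t})I_k^\epsilon.
\]
For the stochastic term, independence and centering of the $I_k^\epsilon$ reduce its second moment to a sum of squares, and the decisive cancellation of the $\epsilon^{-2}$ factor is $\frac1{\epsilon^2}\cdot\frac{\epsilon^2}{2}(1-e^{-\frac{2\Delta t}{\epsilon^2}})\sum_{j\ge0}e^{-2j\frac{\Delta t}{\epsilon^2}}=\frac12$, so this term is bounded uniformly in $\epsilon$ and $\Delta t$ by a constant times $\sup_{q\in\R^d}|\sigma(q)|^2$. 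For the drift term, the Cauchy--Schwarz inequality together with $\epsilon^2(1-e^{-\frac{\Delta t}{\epsilon^2}})\sum_{j\ge0}e^{-j\frac{\Delta t}{\epsilon^2}}=\epsilon^2\le\epsilon_0^2$ and the moment bound on $\sup_{0\le k\le N}\E[|q_k^{\epsilon,\Delta t}|^2]$ established above give a bound by $C(T)(1+|q_0^\epsilon|^2+|p_0^\epsilon|^2)$, while the first term is bounded by $|p_0^\epsilon|^2$. Summing the three contributions gives the claimed bound on $\E[|p_n^{\epsilon,\Delta t}|^2]$, with constants depending on $T$ but not on $\epsilon$ or $\Delta t$, which completes the proof.
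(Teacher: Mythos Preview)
Your proof is correct and follows the same overall strategy as the paper: pass to the auxiliary unknowns $(Q_n^{\epsilon,\Delta t},P_n^{\epsilon,\Delta t})$, iterate, apply a discrete Gronwall argument to bound $q_n^{\epsilon,\Delta t}$, and then treat $p_n^{\epsilon,\Delta t}$ separately by iterating the $p$-equation and exploiting the geometric weights to cancel the $\epsilon^{-2}$. The only noteworthy difference is that you keep the exact coefficients $\epsilon^2(1-e^{-\Delta t/\epsilon^2})$ and the Gaussian integrals $I_k^\epsilon=\int_{t_k}^{t_{k+1}}e^{-(t_{k+1}-s)/\epsilon^2}d\beta(s)$ coming from~\eqref{eq:scheme-weak-QP}, whereas the paper's written proof replaces these by $\Delta t$ and $\Delta\beta_k$ (apparently a copy-paste slip from the proof for the first scheme); your version is the accurate iteration, and your computation $\tfrac{1}{\epsilon^2}\cdot\tfrac{\epsilon^2}{2}(1-e^{-2\Delta t/\epsilon^2})\sum_{j\ge 0}e^{-2j\Delta t/\epsilon^2}=\tfrac12$ is cleaner than the paper's corresponding bound $\tau\sum_{\ell\ge 1}e^{-2\tau\ell}\le C$.
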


\begin{proof}[Proof of Proposition~\ref{propo:moment-scheme-strong}]
Let $\bigl(Q_n^{\epsilon,\Delta t},P_n^{\epsilon,\Delta t}\bigr)_{n=0,\ldots,N}$ be given by~\eqref{eq:scheme-weak-QP}. One has the identities
\begin{align*}
Q_n^{\epsilon,\Delta t}&=Q_0^{\epsilon,\Delta t}+\Delta t\sum_{k=0}^{n-1}f(Q_k^{\epsilon,\Delta t}-P_k^{\epsilon,\Delta t})+\sum_{k=0}^{n-1}\sigma(Q_k^{\epsilon,\Delta t}-P_k^{\epsilon,\Delta t})\Delta\beta_k\\
P_n^{\epsilon,\Delta t}&=e^{-\frac{n\Delta t}{\epsilon^2}}P_0^{\epsilon,\Delta t}+\Delta t\sum_{k=0}^{n-1}e^{-\frac{(n-k)\Delta t}{\epsilon^2}}f(Q_k^{\epsilon,\Delta t}-P_k^{\epsilon,\Delta t})+\sum_{k=0}^{n-1}e^{-\frac{(n-k)\Delta t}{\epsilon^2}}\sigma(Q_k^{\epsilon,\Delta t}-P_k^{\epsilon,\Delta t})\Delta\beta_k
\end{align*}
for all $n\in\{0,\ldots,N\}$. Using It\^o's isometry formula and the Lipschitz continuity property of the mappings $f$ and $\sigma$ (Assumption~\ref{ass:f-sigma}), one obtains the inequality
\[
\E[|Q_n^{\epsilon,\Delta t}|^2]+\E[|P_n^{\epsilon,\Delta t}|^2]\le C(T)\Bigl(1+\E[|Q_0^{\epsilon,\Delta t}|^2]+\E[|P_0^{\epsilon,\Delta t}|^2]+\Delta t\sum_{k=0}^{n-1}\bigl(\E[|Q_k^{\epsilon,\Delta t}|^2]+\E[|P_k^{\epsilon,\Delta t}|^2]\bigr)\Bigr)
\]
for all $n\in\{0,\ldots,N\}$. Since $Q_0^{\epsilon,\Delta t}=q_0^\epsilon+\epsilon p_0^\epsilon$ and $P_0^{\epsilon,\Delta t}=\epsilon p_0^\epsilon$, applying discrete Gronwall's lemma yields the moment bound
\[
\underset{n=0,\ldots,N}\sup~\Bigl(\E[|Q_n^{\epsilon,\Delta t}|^2]+\E[|P_n^{\epsilon,\Delta t}|^2]\Bigr)\le C(T)\bigl(1+|q_0^\epsilon|^2+|p_0^\epsilon|^2\bigr).
\]
Using the identity $q_n^{\epsilon,\Delta t}=Q_n^{\epsilon,\Delta t}-P_n^{\epsilon,\Delta t}$ then gives
\[
\underset{n=0,\ldots,N}\sup~\E[|q_n^{\epsilon,\Delta t}|^2]\le C(T)\bigl(1+|q_0^\epsilon|^2+|p_0^\epsilon|^2\bigr).
\]
Using the identity $p_n^{\epsilon,\Delta t}=P_n^{\epsilon,\Delta t}/\epsilon$, one has
\[
p_n^{\epsilon,\Delta t}=e^{-\frac{n\Delta t}{\epsilon^2}}p_0^{\epsilon}+\frac{\Delta t}{\epsilon}\sum_{k=0}^{n-1}e^{-\frac{(n-k)\Delta t}{\epsilon^2}}f(q_k^{\epsilon,\Delta t})+\frac{1}{\epsilon}\sum_{k=0}^{n-1}e^{-\frac{(n-k)\Delta t}{\epsilon^2}}\sigma(q_k^{\epsilon,\Delta t})\Delta\beta_k.
\]
Applying It\^o's isometry formula, and using the Lipschitz continuity properties of $f$ and $\sigma$ and the moment bounds for $q_n^{\epsilon,\Delta t}$ obtained above, one has, for all $n\in\{0,\ldots,N\}$
\begin{align*}
\E[|p_n^{\epsilon,\Delta t}|^2]&\le C|p_0^\epsilon|^2+\frac{CT\Delta t}{\epsilon^2}\sum_{k=0}^{n-1}e^{-\frac{2(n-k)\Delta t}{\epsilon^2}}\bigl(1+\E[|q_k^{\epsilon,\Delta t}|^2]\bigr)\\
&\le C(T)\bigl(1+|q_0^\epsilon|^2+|p_0^\epsilon|^2\bigr)\bigl(1+\frac{\Delta t}{\epsilon^2}\sum_{k=0}^{n-1}e^{-\frac{2(n-k)\Delta t}{\epsilon^2}}\bigr)\\
&\le C(T)\bigl(1+|q_0^\epsilon|^2+|p_0^\epsilon|^2\bigr),
\end{align*}
which gives the required moment bounds for $p_n^\epsilon$, using the inequality
\[
\tau\sum_{\ell=1}^{\infty}e^{-2\tau\ell}=\frac{\tau}{e^{2\tau}-1}\le C<\infty
\]
with $\tau=\Delta t/\epsilon^2\ge 0$. This concludes the proof of Proposition~\ref{propo:moment-scheme-strong}.
\end{proof}

\subsection{Proof of Proposition~\ref{propo:cv}}\label{sec:moments-cv}

As a consequence of Proposition~\ref{propo:moment-SDE}, we are now in position to provide the proof of the inequalities~\eqref{eq:cvSDE} and~\eqref{eq:cvscheme} from Proposition~\ref{propo:cv}.
\begin{proof}[Proof of the inequality~\eqref{eq:cvSDE}]
For all $t\in[0,T]$, the error is decomposed as
\begin{align*}
q^\epsilon(t)-q^0(t)&=Q^\epsilon(t)-q^0(t)-P^\epsilon(t)\\
&=q_0^\epsilon-q_0^0+\int_0^t \bigl(f(q^\epsilon(s))-f(q^0(s))\bigr)ds+\int_0^t \bigl(\sigma(q^\epsilon(s))-\sigma(q^0(s))\bigr)d\beta(s)-\epsilon p^\epsilon(t).
\end{align*}
Applying It\^o's isometry formula, using the Lipschitz continuity property of $f$ and $\sigma$ and the moment bound~\eqref{eq:moment-SDE}, one obtains, for all $t\in[0,T]$,
\[
\E[|q^\epsilon(t)-q^0(t)|^2]\le C(T)\Bigl(|q_0^\epsilon-q_0^0|^2+\int_0^t\E[|q^\epsilon(s)-q^0(s)|^2]ds+\epsilon^2\bigl(1+|q_0^\epsilon|^2+|p_0^\epsilon|^2\bigr)\Bigr).
\]
Applying Gronwall's lemma and using Assumption~\ref{ass:init}, the proof of the inequality~\eqref{eq:cvSDE} is completed.
\end{proof}

The proof of the inequality~\eqref{eq:cvscheme} is identical for the two choices of numerical schemes~\eqref{eq:scheme-strong} and~\eqref{eq:scheme-weak}, and is a variant of the proof of the inequality~\eqref{eq:cvSDE} in a discrete time situation.
\begin{proof}[Proof of the inequality~\eqref{eq:cvscheme}]
For all $n\in\{0,\ldots,N-1\}$, the error is decomposed as
\begin{align*}
q_n^{\epsilon,\Delta t}-q_n^{0,\Delta t}&=Q_n^{\epsilon,\Delta t}-q_n^{0,\Delta t}-P_n^{\epsilon,\Delta t}\\
&=q_0^\epsilon-q_0^0+\Delta t\sum_{k=0}^{n-1}\bigl(f(q_k^{\epsilon,\Delta t})-f(q_k^{0,\Delta t})+\sum_{k=0}^{n-1}\bigl(\sigma(q_k^{\epsilon,\Delta t})-\sigma(q_k^{0,\Delta t})\bigr)-\epsilon p_n^{\epsilon,\Delta t}.
\end{align*}
Applying It\^o's isometry formula, using the Lipschitz continuity property of $f$ and $\sigma$ and the moment bound~\eqref{eq:moment-scheme-strong}, one obtains, for all $n\in\{0,\ldots,N-1\}$,
\[
\E[|q_n^{\epsilon,\Delta t}-q_n^{0,\Delta t}|^2]\le C(T)\Bigl(|q_0^\epsilon-q_0^0|^2+\Delta t\sum_{k=0}^{n-1}\E[|q_k^{\epsilon,\Delta t}-q_k^{0,\Delta t}|^2]+\epsilon^2\bigl(1+|q_0^\epsilon|^2+|p_0^\epsilon|^2\bigr)\Bigr).
\]
Applying Gronwall's lemma and using Assumption~\ref{ass:init}, the proof of the inequality~\eqref{eq:cvscheme} is completed.
\end{proof}

\section{Proof of the strong error estimates}\label{sec:proofstrong}

Before proceeding with the proof of Theorem~\ref{theo:strong}, let us state two useful inequalities:
\begin{equation}\label{ineq1}
\underset{\tau\in[0,\infty)}\sup~\frac{|1-e^{-\tau}|}{\tau^{\frac12}}<\infty,
\end{equation}
and
\begin{equation}\label{ineq2}
\underset{n\in\N}\sup~\underset{\tau\in(0,\infty)}\sup~ (n+1)\bigl(\frac{1}{(1+\tau)^n}-e^{-n\tau}\bigr)<\infty.
\end{equation}
The proof of the inequality~\eqref{ineq1} is straightforward: the mapping $\tau\in[0,\infty)\to e^{-\tau}$ is bounded and Lipschitz continuous. To prove the inequality~\eqref{ineq2}, it suffices to check that the maximum of the function
\[
\tau\in[0,\infty)\mapsto \frac{1}{(1+\tau)^n}-e^{-n\tau}\in[0,\infty)
\]
is attained for a real number $\tau=\tau_n$ satisfying $e^{-n\tau_n}=\frac{1}{(1+\tau_n)^{n+1}}$: as a consequence
\[
\underset{\tau\in(0,\infty)}\sup~ \bigl(\frac{1}{(1+\tau)^n}-e^{-n\tau}\bigr)=\frac{1}{(1+\tau_n)^n}-e^{-n\tau_n}=\frac{\tau_n}{(1+\tau_n)^{n+1}}\le \frac{\tau_n}{(n+1)\tau_n}\le \frac{1}{n+1}.
\]

Before proceeding with the proof, let us also state and prove the following auxiliary result, concerning temporal regularity of the processes $Q^\epsilon$ and $P^\epsilon$, uniformly with respect to $\epsilon$.
\begin{lemma}\label{lem:reg}
Let Assumption~\ref{ass:f-sigma} be satisfied. For all $T\in(0,\infty)$, there exists $C(T)\in(0,\infty)$ such that for all $\epsilon\in(0,\epsilon_0)$ and all $s_1,s_2\in[0,T]$, one has
\begin{equation}\label{eq:lem-reg}
\E[|Q^\epsilon(s_2)-Q^\epsilon(s_1)|^2]+\E[|P^\epsilon(s_2)-P^\epsilon(s_1)|^2]\le C(T)(1+|q_0^\epsilon|^2+|p_0^\epsilon|^2)|s_2-s_1|.
\end{equation}
\end{lemma}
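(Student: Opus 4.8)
The plan is to estimate the increments of $Q^\epsilon$ and of $P^\epsilon$ separately, using the integrated form of~\eqref{eq:SDE-PQ} for $Q^\epsilon$ and the variation-of-constants (Duhamel) formula for $P^\epsilon$, together with the moment bound of Proposition~\ref{propo:moment-SDE} and the elementary inequality~\eqref{ineq1}. Without loss of generality I assume $0\le s_1\le s_2\le T$, set $\delta=s_2-s_1\in[0,T]$, and abbreviate $q^\epsilon(s)=Q^\epsilon(s)-P^\epsilon(s)$; note that $\sup_{0\le s\le T}\E[|f(q^\epsilon(s))|^2]\le C(T)(1+|q_0^\epsilon|^2+|p_0^\epsilon|^2)$ by the linear growth of $f$ (Assumption~\ref{ass:f-sigma}) and Proposition~\ref{propo:moment-SDE}. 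The estimate for $Q^\epsilon$ is then routine: integrating the first line of~\eqref{eq:SDE-PQ} gives $Q^\epsilon(s_2)-Q^\epsilon(s_1)=\int_{s_1}^{s_2}f(q^\epsilon(s))\,ds+\int_{s_1}^{s_2}\sigma(q^\epsilon(s))\,d\beta(s)$, and I would bound the first term by Cauchy--Schwarz (producing a factor $\delta^2\le T\delta$ after using the moment bound just recalled) and the second by It\^o's isometry and the boundedness of $\sigma$ (producing a factor $\delta$), which yields $\E[|Q^\epsilon(s_2)-Q^\epsilon(s_1)|^2]\le C(T)(1+|q_0^\epsilon|^2+|p_0^\epsilon|^2)\delta$.

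The estimate for $P^\epsilon$ is the substantive part. Writing $P^\epsilon(t)=e^{-t/\epsilon^2}P^\epsilon(0)+\int_0^t e^{-(t-s)/\epsilon^2}f(q^\epsilon(s))\,ds+\int_0^t e^{-(t-s)/\epsilon^2}\sigma(q^\epsilon(s))\,d\beta(s)$ and using $e^{-(s_2-s)/\epsilon^2}=e^{-\delta/\epsilon^2}e^{-(s_1-s)/\epsilon^2}$ for $s\le s_1$, one obtains the decomposition
\begin{align*}
P^\epsilon(s_2)-P^\epsilon(s_1)&=\bigl(e^{-\delta/\epsilon^2}-1\bigr)e^{-s_1/\epsilon^2}P^\epsilon(0)\\
&\quad+\bigl(e^{-\delta/\epsilon^2}-1\bigr)\int_0^{s_1}e^{-(s_1-s)/\epsilon^2}f(q^\epsilon(s))\,ds+\bigl(e^{-\delta/\epsilon^2}-1\bigr)\int_0^{s_1}e^{-(s_1-s)/\epsilon^2}\sigma(q^\epsilon(s))\,d\beta(s)\\
&\quad+\int_{s_1}^{s_2}e^{-(s_2-s)/\epsilon^2}f(q^\epsilon(s))\,ds+\int_{s_1}^{s_2}e^{-(s_2-s)/\epsilon^2}\sigma(q^\epsilon(s))\,d\beta(s),
\end{align*}
and I would bound the second moment of each of the five terms. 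For the two integrals over $[s_1,s_2]$, Cauchy--Schwarz (resp.\ It\^o's isometry), the growth of $f$ (resp.\ boundedness of $\sigma$), Proposition~\ref{propo:moment-SDE}, and the bounds $\int_{s_1}^{s_2}e^{-(s_2-s)/\epsilon^2}ds=\epsilon^2(1-e^{-\delta/\epsilon^2})\le\delta$ and $\int_{s_1}^{s_2}e^{-2(s_2-s)/\epsilon^2}ds\le\delta$ directly give contributions $\le C(T)(1+|q_0^\epsilon|^2+|p_0^\epsilon|^2)\delta$. For the three terms carrying the prefactor $e^{-\delta/\epsilon^2}-1$, the point is to use $\bigl(1-e^{-\delta/\epsilon^2}\bigr)^2\le\delta/\epsilon^2$, which is~\eqref{ineq1} with $\tau=\delta/\epsilon^2$: combined with $|P^\epsilon(0)|^2=\epsilon^2|p_0^\epsilon|^2$, $\int_0^{s_1}e^{-(s_1-s)/\epsilon^2}ds\le\epsilon^2$, and $\int_0^{s_1}e^{-2(s_1-s)/\epsilon^2}ds\le\tfrac{\epsilon^2}{2}$, the factor $1/\epsilon^2$ is exactly absorbed by the $\epsilon^2$ produced by the exponential integrals, so each such term is again $\le C(T)(1+|q_0^\epsilon|^2+|p_0^\epsilon|^2)\delta$. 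Summing the five contributions yields~\eqref{eq:lem-reg}.

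The only step requiring more than routine estimates is the term $\bigl(e^{-\delta/\epsilon^2}-1\bigr)\int_0^{s_1}e^{-(s_1-s)/\epsilon^2}\sigma(q^\epsilon(s))\,d\beta(s)$: the crude bound $(1-e^{-\delta/\epsilon^2})^2\le(\delta/\epsilon^2)^2$ would produce a contribution of order $\delta^2/\epsilon^2$, which fails to be $O(\delta)$ uniformly in $\epsilon$ as soon as $\delta\gg\epsilon^2$, whereas the bound $(1-e^{-\delta/\epsilon^2})^2\le\delta/\epsilon^2$ from~\eqref{ineq1} restores uniformity. This is also the reason for working from the Duhamel formula rather than directly from~\eqref{eq:SDE-PQ}, whose term $\tfrac{1}{\epsilon^2}\int_{s_1}^{s_2}P^\epsilon(s)\,ds$ cannot be controlled naively with an $\epsilon$-uniform constant.
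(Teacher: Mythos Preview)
Your proof is correct and follows essentially the same route as the paper. The only cosmetic difference is that the paper collapses your first three terms into the single expression $\bigl(e^{-\delta/\epsilon^2}-1\bigr)P^\epsilon(s_1)$ and bounds it directly using $P^\epsilon(s_1)=\epsilon p^\epsilon(s_1)$ together with the moment bound for $p^\epsilon$ from Proposition~\ref{propo:moment-SDE}, yielding a three-term decomposition instead of five; the key use of~\eqref{ineq1} to absorb the $1/\epsilon^2$ factor is identical.
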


\begin{proof}[Proof of Lemma~\ref{lem:reg}]
On the one hand, for all $s_1,s_2\in[0,T]$, with $s_1\le s_2$, one has
\begin{align*}
Q^\epsilon(s_2)-Q^\epsilon(s_1)=\int_{s_1}^{s_2}f(q^\epsilon(s))ds+\int_{s_1}^{s_2}\sigma(q^\epsilon(s))d\beta(s),
\end{align*}
therefore using It\^o's isometry formula, the Lipschitz continuity property of the mappings $f$ and $\sigma$ (Assumption~\ref{ass:f-sigma}) and the moment bounds~\eqref{eq:moment-SDE} from Proposition~\ref{propo:moment-SDE}, one obtains
\[
\E[|Q^\epsilon(s_2)-Q^\epsilon(s_1)|^2]\le C(T)\int_{s_1}^{s_2}\bigl(1+\E[|q^\epsilon(s)|^2]\bigr)ds\le C(T)(1+|q_0^\epsilon|^2+|p_0^\epsilon|^2)|s_2-s_1|.
\]
On the one hand, for all $s_1,s_2\in[0,T]$, with $s_1\le s_2$, one has
\begin{align*}
P^\epsilon(s_2)-P^\epsilon(s_1)=\bigl(e^{-\frac{s_2-s_1}{\epsilon^2}}-1\bigr)P^\epsilon(s_1)+\int_{s_1}^{s_2}e^{-\frac{s_2-s}{\epsilon^2}}f(q^\epsilon(s))ds+\int_{s_1}^{s_2}e^{-\frac{s_2-s}{\epsilon^2}}\sigma(q^\epsilon(s))d\beta(s).
\end{align*}
Using the inequality~\eqref{ineq1}, the identity $P^\epsilon(s_1)=\epsilon p^\epsilon(s_1)$, It\^o's isometry formula, the Lipschitz continuity property of the mappings $f$ and $\sigma$ (Assumption~\ref{ass:f-sigma}) and the moment bounds~\eqref{eq:moment-SDE} from Proposition~\ref{propo:moment-SDE}, one obtains
\begin{align*}
\E[|P^\epsilon(s_2)-P^\epsilon(s_1)|^2]&\le C|s_2-s_1|\E[|p^\epsilon(s_1)|^2]+C(T)\int_{s_1}^{s_2}\bigl(1+\E[|q^\epsilon(s)|^2]\bigr)ds\\
&\le C(T)(1+|q_0^\epsilon|^2+|p_0^\epsilon|^2)|s_2-s_1|.
\end{align*}
Gathering the estimates then concludes the proof of Lemma~\ref{lem:reg}.
\end{proof}

We are now in position to provide the proof of Theorem~\ref{theo:strong}
\begin{proof}[Proof of Theorem~\ref{theo:strong}]
Let us first describe the decomposition of the error. Using the identities
\[
q^\epsilon(t_n)=Q^\epsilon(t_n)-P^\epsilon(t_n)~,\quad q_n^{\epsilon,\Delta t}=Q_n^{\epsilon,\Delta t}-P_n^{\epsilon,\Delta t},
\]
the mean-square error is bounded as follows:
\[
\E[|q_n^{\epsilon,\Delta t}-q^\epsilon(t_n)|^2]\le 2\E[|Q_n^{\epsilon,\Delta t}-Q^\epsilon(t_n)|^2]+2\E[|P_n^{\epsilon,\Delta t}-P^\epsilon(t_n)|^2].
\]
Let $E_n^{\epsilon,\Delta t}=\E[|Q_n^{\epsilon,\Delta t}-Q^\epsilon(t_n)|^2]+\E[|P_n^{\epsilon,\Delta t}-P^\epsilon(t_n)|^2]$. Recall the expressions~\eqref{eq:mildQP} for $Q^\epsilon(t_n)$ and $P^\epsilon(t_n)$, and the expressions~\eqref{eq:mildQP-strong} for $q_n^\epsilon$ and $p_n^\epsilon$.

On the one hand, one has
\begin{align*}
Q_n^{\epsilon,\Delta t}-Q^\epsilon(t_n)&=Q_0^{\epsilon,\Delta t}-Q^{\epsilon}(0)\\
&+\Delta t\sum_{k=0}^{n-1}f(Q_k^{\epsilon,\Delta t}-P_k^{\epsilon,\Delta t})-\int_0^{t_n}f(Q^\epsilon(t)-P^\epsilon(t))dt\\
&+\sum_{k=0}^{n-1}\sigma(Q_k^{\epsilon,\Delta t}-P_k^{\epsilon,\Delta t})\Delta\beta_k-\int_0^{t_n}\sigma(Q^\epsilon(t)-P^\epsilon(t))d\beta(t)\\
&=e_{n,Q,1,1}^{\epsilon,\Delta t}+e_{n,Q,1,2}^{\epsilon,\Delta t}+e_{n,Q,2,1}^{\epsilon,\Delta t}+e_{n,Q,2,2}^{\epsilon,\Delta t},
\end{align*}
where one sets
\begin{align*}
e_{n,Q,1,1}^{\epsilon,\Delta t}&=\sum_{k=0}^{n-1}\int_{t_k}^{t_{k+1}}\bigl(f(Q^{\epsilon}(t_k)-P^{\epsilon}(t_k))-f(Q^{\epsilon}(t)-P^{\epsilon}(t))\bigr)dt\\
e_{n,Q,1,2}^{\epsilon,\Delta t}&=\Delta t\sum_{k=0}^{n-1}\bigl(f(Q_k^{\epsilon,\Delta t}-P_k^{\epsilon,\Delta t})-f(Q^{\epsilon}(t_k)-P^{\epsilon}(t_k))\bigr)\\
e_{n,Q,2,1}^{\epsilon,\Delta t}&=\sum_{k=0}^{n-1}\int_{t_k}^{t_{k+1}}\bigl(\sigma(Q^{\epsilon}(t_k)-P^{\epsilon}(t_k))-\sigma(Q^{\epsilon}(t)-P^{\epsilon}(t))\bigr)d\beta(t)\\
e_{n,Q,2,2}^{\epsilon,\Delta t}&=\sum_{k=0}^{n-1}\bigl(\sigma(Q_k^{\epsilon,\Delta t}-P_k^{\epsilon,\Delta t})-\sigma(Q^{\epsilon}(t_k)-P^{\epsilon}(t_k))\bigr)\Delta\beta_k.
\end{align*}
On the other hand, one has
\begin{align*}
P_n^{\epsilon,\Delta t}-P^\epsilon(t_n)&=\frac{1}{(1+\frac{\Delta t}{\epsilon^2})^n}P_0^{\epsilon,\Delta t}-e^{-\frac{t_n}{\epsilon^2}}P_0^\epsilon\\
&+\Delta t\sum_{k=0}^{n-1}\frac{1}{(1+\frac{\Delta t}{\epsilon^2})^{n-k}}f(Q_k^{\epsilon,\Delta t}-P_k^{\epsilon,\Delta t})-\int_{0}^{t_n}e^{-\frac{t_n-t}{\epsilon^2}}f(Q^\epsilon(t)-P^\epsilon(t))dt\\
&+\sum_{k=0}^{n-1}\frac{1}{(1+\frac{\Delta t}{\epsilon^2})^{n-k}}\sigma(Q_k^{\epsilon,\Delta t}-P_k^{\epsilon,\Delta t})\Delta\beta_k-\int_{0}^{t_n}e^{-\frac{t_n-t}{\epsilon^2}}\sigma(Q^\epsilon(t)-P^\epsilon(t))d\beta(t)\\
&=e_{n,P,0}^{\epsilon,\Delta t}\\
&+e_{n,P,1,1}^{\epsilon,\Delta t}+e_{n,P,1,2}^{\epsilon,\Delta t}+e_{n,P,1,3}^{\epsilon,\Delta t}+e_{n,P,1,4}^{\epsilon,\Delta t}\\
&+e_{n,P,2,1}^{\epsilon,\Delta t}+e_{n,P,2,2}^{\epsilon,\Delta t}+e_{n,P,2,3}^{\epsilon,\Delta t}+e_{n,P,2,4}^{\epsilon,\Delta t}
\end{align*}
where one sets
\[
e_{n,P,0}^{\epsilon,\Delta t}=\bigl(\frac{1}{(1+\frac{\Delta t}{\epsilon^2})^n}-e^{-\frac{t_n}{\epsilon^2}}\bigr)P_0^{\epsilon,\Delta t}=\epsilon\bigl(\frac{1}{(1+\frac{\Delta t}{\epsilon^2})^n}-e^{-\frac{t_n}{\epsilon^2}}\bigr)p_0^{\epsilon},
\]
then
\begin{align*}
e_{n,P,1,1}^{\epsilon,\Delta t}&=\sum_{k=0}^{n-1}\int_{t_k}^{t_{k+1}}e^{-\frac{t_n-t}{\epsilon^2}}\bigl(f(Q^\epsilon(t_k)-P^\epsilon(t_k))-f(Q^\epsilon(t)-P^\epsilon(t))\bigr)dt\\
e_{n,P,1,2}^{\epsilon,\Delta t}&=\sum_{k=0}^{n-1}\int_{t_k}^{t_{k+1}}e^{-\frac{t_n-t}{\epsilon^2}}\bigl(f(Q_k^{\epsilon,\Delta t}-P_k^{\epsilon,\Delta t})-f(Q^\epsilon(t_k)-P^\epsilon(t_k))\bigr)dt\\
e_{n,P,1,3}^{\epsilon,\Delta t}&=\sum_{k=0}^{n-1}\int_{t_k}^{t_{k+1}}\bigl(e^{-\frac{t_n-t_k}{\epsilon^2}}-e^{-\frac{t_n-t}{\epsilon^2}}\bigr)f(Q_k^{\epsilon,\Delta t}-P_k^{\epsilon,\Delta t})dt\\
e_{n,P,1,4}^{\epsilon,\Delta t}&=\Delta t\sum_{k=0}^{n-1}\bigl(\frac{1}{(1+\frac{\Delta t}{\epsilon^2})^{n-k}}-e^{-\frac{t_n-t_k}{\epsilon^2}}\bigr)f(Q_k^{\epsilon,\Delta t}-P_k^{\epsilon,\Delta t})
\end{align*}
and finally
\begin{align*}
e_{n,P,2,1}^{\epsilon,\Delta t}&=\sum_{k=0}^{n-1}\int_{t_k}^{t_{k+1}}e^{-\frac{t_n-t}{\epsilon^2}}\bigl(\sigma(Q^\epsilon(t_k)-P^\epsilon(t_k))-\sigma(Q^\epsilon(t)-P^\epsilon(t))\bigr)d\beta(t)\\
e_{n,P,2,2}^{\epsilon,\Delta t}&=\sum_{k=0}^{n-1}\int_{t_k}^{t_{k+1}}e^{-\frac{t_n-t}{\epsilon^2}}\bigl(\sigma(Q_k^{\epsilon,\Delta t}-P_k^{\epsilon,\Delta t})-\sigma(Q^\epsilon(t_k)-P^\epsilon(t_k))\bigr)d\beta(t)\\
e_{n,P,2,3}^{\epsilon,\Delta t}&=\sum_{k=0}^{n-1}\int_{t_k}^{t_{k+1}}\bigl(e^{-\frac{t_n-t_k}{\epsilon^2}}-e^{-\frac{t_n-t}{\epsilon^2}}\bigr)\sigma(Q_k^{\epsilon,\Delta t}-P_k^{\epsilon,\Delta t})d\beta(t)\\
e_{n,P,2,4}^{\epsilon,\Delta t}&=\sum_{k=0}^{n-1}\bigl(\frac{1}{(1+\frac{\Delta t}{\epsilon^2})^{n-k}}-e^{-\frac{t_n-t_k}{\epsilon^2}}\bigr)\sigma(Q_k^{\epsilon,\Delta t}-P_k^{\epsilon,\Delta t})\Delta\beta_k.
\end{align*}

Let us prove error estimates for each of the terms defined above. Let us start with the terms appearing in the right-hand side of the expression of the error term $Q_n^{\epsilon,\Delta t}-Q^\epsilon(t_n)$ above.

Using the Cauchy--Schwarz inequality, the Lipschitz continuity of $f$ (Assumption~\ref{ass:f-sigma}) and the inequality~\eqref{eq:lem-reg} from Lemma~\ref{lem:reg}, one obtains
\begin{align*}
\E[|e_{n,Q,1,1}^{\epsilon,\Delta t}|^2]&\le CT\sum_{k=0}^{n-1}\int_{t_k}^{t_{k+1}}\bigl(\E[|Q^\epsilon(t_k)-Q^\epsilon(t)|^2]+\E[|P^\epsilon(t_k)-P^\epsilon(t)|^2]\bigr)dt\\
&\le C(T)(1+|q_0^\epsilon|^2+|p_0^\epsilon|^2)\Delta t.
\end{align*}

Using the Cauchy--Schwarz inequality and the Lipschitz continuity of $f$, one obtains
\[
\E[|e_{n,Q,1,2}^{\epsilon,\Delta t}|^2]\le CT\Delta t\sum_{k=0}^{n-1}E_k^{\epsilon,\Delta t}.
\]

Using It\^o's isometry formula, the Lipschitz continuity of $\sigma$ and Lemma~\eqref{eq:lem-reg}, one obtains
\begin{align*}
\E[|e_{n,Q,2,1}^{\epsilon,\Delta t}|^2]&\le C\sum_{k=0}^{n-1}\int_{t_k}^{t_{k+1}}\bigl(\E[|Q^\epsilon(t_k)-Q^\epsilon(t)|^2]+\E[|P^\epsilon(t_k)-P^\epsilon(t)|^2]\bigr)dt\\
&\le C(T)(1+|q_0^\epsilon|^2+|p_0^\epsilon|^2)\Delta t.
\end{align*}
Using It\^o's isometry formula and the Lipschitz continuity of $\sigma$, one obtains
\[
\E[|e_{n,Q,2,2}^{\epsilon,\Delta t}|^2]\le CT\Delta t\sum_{k=0}^{n-1}E_k^{\epsilon,\Delta t}.
\]
Gathering the estimates, for all $n\in\{1,\ldots,N\}$ one obtains the upper bound
\begin{equation}\label{eq:aux-strongQ}
\begin{aligned}
\E[|Q_n^{\epsilon,\Delta t}-Q^\epsilon(t_n)|^2]&\le 4\Bigl(\E[|e_{n,Q,1,1}^{\epsilon,\Delta t}|^2]+\E[|e_{n,Q,1,2}^{\epsilon,\Delta t}|^2]+\E[|e_{n,Q,2,1}^{\epsilon,\Delta t}|^2]+\E[|e_{n,Q,2,2}^{\epsilon,\Delta t}|^2]\Bigr)\\
&\le CT\Delta t\sum_{k=0}^{n-1}E_k^{\epsilon,\Delta t}+C(T)(1+|q_0^\epsilon|^2+|p_0^\epsilon|^2)\Delta t.
\end{aligned}
\end{equation}

Let us now treat the terms appearing in the right-hand side of the expression of the error term $P_n^{\epsilon,\Delta t}-P^\epsilon(t_n)$ above.

Using the inequality~\eqref{ineq2}, one obtains
\[
\E[|e_{n,P,0}^{\epsilon,\Delta t}|^2]\le \frac{C\epsilon^2}{(n+1)^2}|p_0^\epsilon|^2.
\]

Using the Cauchy--Schwarz inequality, the Lipschitz continuity of $f$ and Lemma~\eqref{eq:lem-reg}, one obtains
\begin{align*}
\E[|e_{n,P,1,1}^{\epsilon,\Delta t}|^2]&\le CT\sum_{k=0}^{n-1}\int_{t_k}^{t_{k+1}}\bigl(\E[|Q^\epsilon(t_k)-Q^\epsilon(t)|^2]+\E[|P^\epsilon(t_k)-P^\epsilon(t)|^2]\bigr)dt\\
&\le C(T)(1+|q_0^\epsilon|^2+|p_0^\epsilon|^2)\Delta t.
\end{align*}
Using the Cauchy--Schwarz inequality and the Lipschitz continuity of $f$, one obtains
\[
\E[|e_{n,P,1,2}^{\epsilon,\Delta t}|^2]\le CT\Delta t\sum_{k=0}^{n-1}E_k^{\epsilon,\Delta t}.
\]
Using the Cauchy--Schwarz inequality, the Lipschitz continuity of $f$ and the inequality~\eqref{ineq1}, one obtains
\begin{align*}
\E[|e_{n,P,1,3}^{\epsilon,\Delta t}|^2]&\le C(T)(1+|q_0^\epsilon|^2+|p_0^\epsilon|^2)\sum_{k=0}^{n-1}\int_{t_k}^{t_{k+1}}\bigl(e^{-\frac{t-t_k}{\epsilon^2}}-1 \bigr)^2e^{-2\frac{t_n-t}{\epsilon^2}}dt\\
&\le C(T)(1+|q_0^\epsilon|^2+|p_0^\epsilon|^2)\frac{\Delta t}{\epsilon^2}\int_{0}^{t_{n}}e^{-2\frac{t_n-t}{\epsilon^2}}dt\\
&\le C(T)(1+|q_0^\epsilon|^2+|p_0^\epsilon|^2)\Delta t.
\end{align*}
Using the Cauchy--Schwarz inequality, the Lipschitz continuity of $f$ and the inequality~\eqref{ineq2}, one obtains
\begin{align*}
\E[|e_{n,P,1,4}^{\epsilon,\Delta t}|^2]&\le C(T)(1+|q_0^\epsilon|^2+|p_0^\epsilon|^2)\sum_{k=0}^{n-1}\frac{\Delta t}{(n-k)^2}\\
&\le C(T)(1+|q_0^\epsilon|^2+|p_0^\epsilon|^2)\Delta t.
\end{align*}

Using It\^o's isometry formula, the Lipschitz continuity of $\sigma$ and Lemma~\eqref{eq:lem-reg}, one obtains
\begin{align*}
\E[|e_{n,P,2,1}^{\epsilon,\Delta t}|^2]&\le CT\sum_{k=0}^{n-1}\int_{t_k}^{t_{k+1}}\bigl(\E[|Q^\epsilon(t_k)-Q^\epsilon(t)|^2]+\E[|P^\epsilon(t_k)-P^\epsilon(t)|^2]\bigr)dt\\
&\le C(T)(1+|q_0^\epsilon|^2+|p_0^\epsilon|^2)\Delta t.
\end{align*}
Using It\^o's isometry formula and the Lipschitz continuity of $\sigma$, one obtains
\[
\E[|e_{n,P,2,2}^{\epsilon,\Delta t}|^2]\le CT\Delta t\sum_{k=0}^{n-1}E_k^{\epsilon,\Delta t}.
\]
Using It\^o's isometry formula, the Lipschitz continuity of $\sigma$ and the inequality~\eqref{ineq1}, one obtains
\begin{align*}
\E[|e_{n,P,2,3}^{\epsilon,\Delta t}|^2]&\le C(T)(1+|q_0^\epsilon|^2+|p_0^\epsilon|^2)\sum_{k=0}^{n-1}\int_{t_k}^{t_{k+1}}\bigl(e^{-\frac{t-t_k}{\epsilon^2}}-1 \bigr)^2e^{-2\frac{t_n-t}{\epsilon^2}}dt\\
&\le C(T)(1+|q_0^\epsilon|^2+|p_0^\epsilon|^2)\frac{\Delta t}{\epsilon^2}\int_{0}^{t_{n}}e^{-2\frac{t_n-t}{\epsilon^2}}dt\\
&\le C(T)(1+|q_0^\epsilon|^2+|p_0^\epsilon|^2)\Delta t.
\end{align*}
Using It\^o's isometry formula, the Lipschitz continuity of $\sigma$ and the inequality~\eqref{ineq2}, one obtains
\begin{align*}
\E[|e_{n,P,2,4}^{\epsilon,\Delta t}|^2]&\le C(T)(1+|q_0^\epsilon|^2+|p_0^\epsilon|^2)\sum_{k=0}^{n-1}\frac{\Delta t}{(n-k)^2}\\
&\le C(T)(1+|q_0^\epsilon|^2+|p_0^\epsilon|^2)\Delta t.
\end{align*}
Gathering the estimates, for all $n\in\{1,\ldots,N\}$ one obtains the upper bound
\begin{equation}\label{eq:aux-strongP}
\begin{aligned}
\E[|P_n^{\epsilon,\Delta t}-P^\epsilon(t_n)|^2]&\le 9\E[|e_{n,P,0}^{\epsilon,\Delta t}|^2]\\
&+9\Bigl(\E[|e_{n,P,1,1}^{\epsilon,\Delta t}|^2]+\E[|e_{n,P,1,2}^{\epsilon,\Delta t}|^2]+\E[|e_{n,P,1,3}^{\epsilon,\Delta t}|^2]+\E[|e_{n,P,1,4}^{\epsilon,\Delta t}|^2]\Bigr)\\
&+9\Bigl(\E[|e_{n,P,2,1}^{\epsilon,\Delta t}|^2]+\E[|e_{n,P,2,2}^{\epsilon,\Delta t}|^2]+\E[|e_{n,P,2,3}^{\epsilon,\Delta t}|^2]+\E[|e_{n,P,2,4}^{\epsilon,\Delta t}|^2]\Bigr)\\
&\le C\Delta t\sum_{k=1}^{n-1}E_k^{\epsilon,\Delta t}+\frac{C\epsilon^2}{(n+1)^2}|p_0^\epsilon|^2+C(T)(1+|q_0^\epsilon|^2+|p_0^\epsilon|^2)\Delta t.
\end{aligned}
\end{equation}

Let us now conclude the proof of the strong error estimates. Owing to the upper bounds~\eqref{eq:aux-strongQ} and~\eqref{eq:aux-strongP}, for all $n\in\{0,\ldots,N\}$, one obtains the upper bound
\begin{align*}
E_n^{\epsilon,\Delta t}&=\E[|Q_n^{\epsilon,\Delta t}-Q^\epsilon(t_n)|^2]+\E[|P_n^{\epsilon,\Delta t}-P^\epsilon(t_n)|^2]\\
&\le C\Delta t\sum_{k=1}^{n-1}E_k^{\epsilon,\Delta t}+\frac{C\epsilon^2}{(n+1)^2}|p_0^\epsilon|^2+C(T)(1+|q_0^\epsilon|^2+|p_0^\epsilon|^2)\Delta t,
\end{align*}
with $E_0^{\epsilon,\Delta t}=0$.

For all $n\in\{1,\ldots,N\}$, set
\[
\hat{E}_{n}^{\epsilon,\Delta t}=E_n^{\epsilon,\Delta t}-\frac{C\epsilon^2}{(n+1)^2}|p_0^\epsilon|^2,
\]
then one obtains the inequality
\[
\hat{E}_n^{\epsilon,\Delta t}\le C(T)\Delta t\sum_{k=1}^{n-1}\hat{E}_k^{\epsilon,\Delta t}+C(T)(1+|q_0^\epsilon|^2+|p_0^\epsilon|^2)\Delta t,
\]
using the fact that $\sum_{k=1}^{\infty}1/k^2<\infty$. Applying discrete Gronwall's lemma yields the inequality
\[
\underset{n=1,\ldots,N}\sup~\hat{E}_{n}^{\epsilon,\Delta t}\le C(T)(1+|q_0^\epsilon|^2+|p_0^\epsilon|^2)\Delta t,
\]
therefore one obtains for all $n\in\{1,\ldots,N\}$ the upper bound
\[
E_{n}^{\epsilon,\Delta t}\le \frac{C\epsilon^2}{(n+1)^2}|p_0^\epsilon|^2+C(T)(1+|q_0^\epsilon|^2+|p_0^\epsilon|^2)\Delta t.
\]
Using Assumption~\ref{ass:init} then concludes the proof of Theorem~\ref{theo:strong}.
\end{proof}

\section{Proof of the weak error estimates}\label{sec:proofweak}

The objective of this section is to provide the proof of Theorem~\ref{theo:weak}. The most important

\subsection{Regularity estimates for solutions of Kolmogorov equations}\label{sec:Kolmogorov}

Let $\varphi:\R^d\to\R$ be a mapping of class $\mathcal{C}^3$, with bounded derivatives of order $1,2,3$. For all $(q,p)\in\R^{2d}$ and $t\ge 0$, let
\begin{equation}\label{eq:u}
u^\epsilon(t,q,p)=\E_{q,p}[\varphi(q^\epsilon(t))]
\end{equation}
where the subscript means that the process $\bigl(q^\epsilon(t),p^\epsilon(t)\bigr)_{t\ge 0}$ is solution of the SDE~\eqref{eq:SDE} with initial values $q^\epsilon(0)=q$ and $p^\epsilon(0)=p$.

As a consequence of the definition~\eqref{eq:u} of the function $u$, the weak error may be written as
\begin{equation}\label{eq:weakerror}
\E[\varphi(q_n^{\epsilon,\Delta t})]-\E[\varphi(q^\epsilon(t_n))]
=\E[u^\epsilon(0,q_n^{\epsilon,\Delta t},p_n^{\epsilon,\Delta t})]
-\E[u^\epsilon(t_n,q_0^{\epsilon,\Delta t},p_0^{\epsilon,\Delta t})],
\end{equation}
for all $n\in\{0,\ldots,N\}$, where $q_n^{\epsilon,\Delta t}$ and $p_n^{\epsilon,\Delta t}$ are given by~\eqref{eq:scheme-weak}, see for instance~\cite{Talay:86,TalayTubaro}.

Since the mappings $\varphi$, $f$ and $\sigma$ are of class $\mathcal{C}^3$ with bounded derivatives, the mapping
\[
(t,q,p)\in [0,\infty)\times \R^{2d}\mapsto u^\epsilon(t,q,p)
\]
is of class $\mathcal{C}_t^1\mathcal{C}_{q,p}^3$, moreover it is the solution of the Kolmogorov equation (see~\cite{Cerrai} for instance)
\begin{equation}\label{eq:Kolmogorov}
\left\lbrace
\begin{aligned}
\partial_tu^\epsilon(t,q,p)&=\frac{1}{\epsilon}\Bigl(\nabla_qu^\epsilon(t,q,p)\cdot p+\nabla_pu^\epsilon(t,q,p)\cdot f(q)\Bigr)\\
&+\frac{1}{\epsilon^2}\Bigl(-\nabla_pu^\epsilon(t,q,p)\cdot p+\nabla_{p}^2u^\epsilon(t,q,p):a(q)\Bigr),\\
u^\epsilon(0,q,p)&=\varphi(q),
\end{aligned}
\right.
\end{equation}
where the mapping $a$ is given by~\eqref{eq:a} and the following notation is used:
\[
\nabla_{p}^2u^\epsilon(t,q,p):a(q)=\sum_{i,j=1}^{d}\partial_i\partial_j u^\epsilon(t,q,p)a_{ij}(q).
\]

Let us state upper bounds on the first, second and third order spatial derivatives of $u^\epsilon$, with a careful analysis of the dependence with respect to the parameter $\epsilon$.
\begin{propo}\label{propo:Kolmogorov}

For all $T\in(0,\infty)$, there exists $C(T)\in(0,\infty)$ such that for any function $\varphi:\R^d\to\R$ of class $\mathcal{C}^3$ with bounded derivatives of order $1,2,3$ and for all $h^1,h^2,h^3\in\R^d$, one has
\begin{equation}\label{eq:K1}
\underset{\epsilon\in(0,\epsilon_0)}\sup~\underset{(t,q,p)\in[0,T]\times\R^{2d}}\sup~\frac{|\nabla_pu^\epsilon(t,q,p)\cdot h^1|}{\epsilon}\le C(T)\vvvert\varphi\vvvert_1|h^1|.
\end{equation}

\begin{equation}\label{eq:K2}
\underset{\epsilon\in(0,\epsilon_0)}\sup~\underset{(t,q,p)\in[0,T]\times\R^{2d}}\sup~\Bigl(\frac{|\nabla_q\nabla_pu^\epsilon(t,q,p)\cdot (h^1,h^2)|}{\epsilon}+\frac{|\nabla_p^2u^\epsilon(t,q,p)\cdot (h^1,h^2)|}{\epsilon^2}\Bigr)\le C(T)\vvvert\varphi\vvvert_2|h^1||h^2|.
\end{equation}

\begin{equation}\label{eq:K3}
\underset{\epsilon\in(0,\epsilon_0)}\sup~\underset{(t,q,p)\in[0,T]\times\R^{2d}}\sup~\Bigl(\frac{|\nabla_q\nabla_p^2u^\epsilon(t,q,p)\cdot (h^1,h^2,h^3)|}{\epsilon^2}+\frac{|\nabla_p^3u^\epsilon(t,q,p)\cdot (h^1,h^2,h^3)|}{\epsilon^3}\Bigr)\le C(T)\vvvert\varphi\vvvert_3|h^1||h^2||h^3|.
\end{equation}

\end{propo}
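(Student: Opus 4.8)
The plan is to differentiate the probabilistic representation $u^\epsilon(t,q,p)=\E_{q,p}[\varphi(q^\epsilon(t))]$ with respect to the initial conditions $(q,p)$ and to estimate the resulting derivative processes, tracking carefully the powers of $\epsilon$. First I would introduce the first-order variation processes: for a direction $(h_q,h_p)\in\R^{2d}$, let $\bigl(\eta^\epsilon(t),\zeta^\epsilon(t)\bigr)$ denote the derivative of $\bigl(q^\epsilon(t),p^\epsilon(t)\bigr)$ with respect to the initial datum in that direction. These solve the linearized system
\begin{equation*}
\left\lbrace
\begin{aligned}
d\eta^\epsilon(t)&=\frac{\zeta^\epsilon(t)}{\epsilon}dt\\
d\zeta^\epsilon(t)&=-\frac{\zeta^\epsilon(t)}{\epsilon^2}dt+\frac{Df(q^\epsilon(t))\cdot\eta^\epsilon(t)}{\epsilon}dt+\frac{D\sigma(q^\epsilon(t))\cdot\eta^\epsilon(t)}{\epsilon}d\beta(t),
\end{aligned}
\right.
\end{equation*}
with $\bigl(\eta^\epsilon(0),\zeta^\epsilon(0)\bigr)=(h_q,h_p)$. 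Then $\nabla_q u^\epsilon(t,q,p)\cdot h_q=\E[\nabla\varphi(q^\epsilon(t))\cdot\eta^\epsilon(t)]$ when $h_p=0$, and $\nabla_p u^\epsilon(t,q,p)\cdot h_p=\E[\nabla\varphi(q^\epsilon(t))\cdot\eta^\epsilon(t)]$ when $h_q=0$. The crucial point for~\eqref{eq:K1} is that an initial perturbation purely in the $p$-variable ($h_q=0$, $h_p=h^1$) produces an $\eta^\epsilon$-component of size $\epsilon$: indeed, performing the same change of unknowns as in~\eqref{eq:cdvPQ}, namely $\tilde\eta^\epsilon=\eta^\epsilon+\epsilon\zeta^\epsilon$ and $\tilde\zeta^\epsilon=\epsilon\zeta^\epsilon$, one transforms the system into one where the initial data are $\tilde\eta^\epsilon(0)=\epsilon h^1$, $\tilde\zeta^\epsilon(0)=\epsilon h^1$ and the coefficients are bounded uniformly in $\epsilon$ (the stiff term $-\tilde\zeta^\epsilon/\epsilon^2$ only helps, via the contractive semigroup $e^{-t/\epsilon^2}$). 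A Gronwall argument combined with It\^o's isometry, exactly as in the proof of Proposition~\ref{propo:moment-SDE} and Lemma~\ref{lem:reg}, then gives $\E[|\eta^\epsilon(t)|^2]\le C(T)\epsilon^2|h^1|^2$, whence~\eqref{eq:K1}.

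For~\eqref{eq:K2} and~\eqref{eq:K3} I would iterate this scheme. The second derivative of $u^\epsilon$ is expressed through the first-order variation processes and a second-order variation process $\bigl(\eta^{\epsilon,(2)},\zeta^{\epsilon,(2)}\bigr)$ solving the same linearized system with a forcing term quadratic in the first variations (involving $D^2 f$, $D^2\sigma$ and products $\eta^\epsilon\otimes\eta^\epsilon$), so that schematically
\begin{equation*}
\nabla^2 u^\epsilon(t,q,p)\cdot(h^1,h^2)=\E\bigl[\nabla^2\varphi(q^\epsilon(t))\cdot(\eta^{\epsilon,1}(t),\eta^{\epsilon,2}(t))\bigr]+\E\bigl[\nabla\varphi(q^\epsilon(t))\cdot\eta^{\epsilon,(2)}(t)\bigr].
\end{equation*}
The $\epsilon$-scaling is then read off by counting: a $\nabla_p$ costs a factor $\epsilon$ in the corresponding $\eta$-component, so $\nabla_q\nabla_p u^\epsilon$ inherits one factor $\epsilon$ and $\nabla_p^2 u^\epsilon$ two factors $\epsilon^2$; the forcing term of the second-order process contains the product of two first variations, and since one differentiation in $p$ gives an $\eta$ of size $\epsilon$, the induced $\eta^{\epsilon,(2)}$ has the same scaling. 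Again the change of unknowns $(\eta,\zeta)\mapsto(\eta+\epsilon\zeta,\epsilon\zeta)$ renders all coefficients and the stiff exponential harmless, and moment bounds plus It\^o's isometry close the estimates. The same bookkeeping at third order, using a third-order variation process forced by terms cubic in the first variations and quadratic products of first and second variations, yields~\eqref{eq:K3}.

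The main obstacle is the careful tracking of the powers of $\epsilon$ through the hierarchy of variation equations: one must verify that the change of unknowns $(\eta^\epsilon,\zeta^\epsilon)\mapsto(\eta^\epsilon+\epsilon\zeta^\epsilon,\epsilon\zeta^\epsilon)$ continues to produce a well-behaved (uniformly-in-$\epsilon$) system for the higher-order variations, where the forcing terms themselves carry explicit powers of $\epsilon^{-1}$ (from the $f(q)/\epsilon$ and $\sigma(q)/\epsilon$ coefficients) that must cancel against the $\epsilon$-factors gained from the initial data or from lower-order variation processes evaluated in $p$-directions. In particular, for the mixed derivatives $\nabla_q\nabla_p u^\epsilon$ and $\nabla_q\nabla_p^2 u^\epsilon$ one must keep separate track of which directions are $q$-directions and which are $p$-directions, since only the latter contribute powers of $\epsilon$. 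Once the correct scaled variation processes are identified, the estimates themselves are routine applications of Gronwall's lemma, It\^o's isometry, and the contraction property $\int_0^\infty \epsilon^{-2}e^{-2r/\epsilon^2}\,dr=\tfrac12$, all of which have already appeared in Section~\ref{sec:moments}.
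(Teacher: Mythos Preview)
Your proposal is correct and follows essentially the same route as the paper. The paper differentiates the probabilistic representation, introduces the same hierarchy of variation processes, and extracts the $\epsilon$-scaling via the identity $\eta_q^\epsilon(t)=h_q+\int_0^t Df\cdot\eta_q^\epsilon\,ds+\int_0^t D\sigma\cdot\eta_q^\epsilon\,d\beta(s)+\epsilon\bigl(\eta_p^\epsilon(0)-\eta_p^\epsilon(t)\bigr)$, which is exactly your change of unknowns $(\eta,\zeta)\mapsto(\eta+\epsilon\zeta,\epsilon\zeta)$ written out; the only point you should sharpen is that at second and third order the forcing terms involve products of lower-order variations, so closing the estimates requires $2m$-th moment bounds (H\"older plus Burkholder--Davis--Gundy) rather than just It\^o's isometry, and the paper accordingly proves the claims~\eqref{eq:claimK1}--\eqref{eq:claimK3} for all $m\in\N$.
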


The proof of Proposition~\ref{propo:Kolmogorov} is postponed to Section~\ref{sec:proofKolmogorov}.

\subsection{Proof of Theorem~\ref{theo:weak}}

This section is devoted to the proof of Theorem~\ref{theo:weak}. Many of the arguments are standard, the novelty is to deal carefully with the dependence with respect to $\epsilon$ using the regularity estimates from Proposition~\ref{propo:Kolmogorov} above.

Before proceeding with the proof of Theorem~\ref{theo:weak}, let us state and prove an auxiliary result, which is a variant of Lemma~\ref{lem:reg} for the auxiliary processes $\tilde{q}^{\epsilon,\Delta t}$ and $\tilde{p}^{\epsilon,\Delta t}$.
\begin{lemma}\label{lem:regtilde}
Let Assumption~\ref{ass:f-sigma} be satisfied. For all $T\in(0,\infty)$, there exists $C(T)\in(0,\infty)$ such that for all $\epsilon\in(0,\epsilon_0)$, all $\Delta t\in(0,\Delta t_0)$, all $n\in\{0,\ldots,N-1\}$ and all $t\in[t_n,t_{n+1}]$, one has
\begin{equation}\label{eq:lem-regtilde}
\E[|\tilde{q}^{\epsilon,\Delta t}(t)-\tilde{q}^{\epsilon,\Delta t}(t_n)|^2]+\epsilon^2\E[|\tilde{p}^{\epsilon,\Delta t}(t)-\tilde{p}^{\epsilon,\Delta t}(t_n)|^2]\le C(T)(1+|q_0^\epsilon|^2+|p_0^\epsilon|^2)\Delta t.
\end{equation}
\end{lemma}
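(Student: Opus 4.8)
The plan is to follow the strategy of Lemma~\ref{lem:reg}, but since the coefficients are frozen on $[t_n,t_{n+1}]$ it is more convenient to use the explicit representation of $\bigl(\tilde q^{\epsilon,\Delta t}(t),\tilde p^{\epsilon,\Delta t}(t)\bigr)$ on that subinterval established in the proof of Proposition~\ref{propo:SDEtilde}: for $t\in[t_n,t_{n+1}]$,
\[
\tilde p^{\epsilon,\Delta t}(t)=e^{-\frac{t-t_n}{\epsilon^2}}\tilde p^{\epsilon,\Delta t}(t_n)+\epsilon\bigl(1-e^{-\frac{t-t_n}{\epsilon^2}}\bigr)f(\tilde q^{\epsilon,\Delta t}(t_n))+\frac{1}{\epsilon}\sigma(\tilde q^{\epsilon,\Delta t}(t_n))\int_{t_n}^{t}e^{-\frac{t-s}{\epsilon^2}}d\beta(s).
\]
Since $\tilde q^{\epsilon,\Delta t}(t_n)=q_n^{\epsilon,\Delta t}$ and $\tilde p^{\epsilon,\Delta t}(t_n)=p_n^{\epsilon,\Delta t}$ (with $\tilde q^{\epsilon,\Delta t}(0)=q_0^\epsilon$ and $\tilde p^{\epsilon,\Delta t}(0)=p_0^\epsilon$), Proposition~\ref{propo:moment-scheme-weak} yields the uniform bound $\E[|\tilde q^{\epsilon,\Delta t}(t_n)|^2]+\E[|\tilde p^{\epsilon,\Delta t}(t_n)|^2]\le C(T)(1+|q_0^\epsilon|^2+|p_0^\epsilon|^2)$. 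Throughout, I would only use the elementary facts $0\le 1-e^{-\tau}\le\min(1,\tau)$ (hence $(1-e^{-\tau})^2\le\tau$) together with \eqref{ineq1}, the linear growth of $f$, the boundedness of $\sigma$, and $\Delta t\le\Delta t_0$.

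For the momentum increment, subtracting $\tilde p^{\epsilon,\Delta t}(t_n)$ and multiplying by $\epsilon$ produces three terms: $\epsilon\bigl(e^{-\frac{t-t_n}{\epsilon^2}}-1\bigr)\tilde p^{\epsilon,\Delta t}(t_n)$, then $\epsilon^2\bigl(1-e^{-\frac{t-t_n}{\epsilon^2}}\bigr)f(\tilde q^{\epsilon,\Delta t}(t_n))$, and finally $\sigma(\tilde q^{\epsilon,\Delta t}(t_n))\int_{t_n}^{t}e^{-\frac{t-s}{\epsilon^2}}d\beta(s)$. Writing $\tau=(t-t_n)/\epsilon^2$, the first has $L^2$-norm squared bounded by $\epsilon^2(1-e^{-\tau})^2\,\E[|\tilde p^{\epsilon,\Delta t}(t_n)|^2]\le(t-t_n)\,\E[|\tilde p^{\epsilon,\Delta t}(t_n)|^2]$; the second by $\bigl(\epsilon^2(1-e^{-\tau})\bigr)^2\,\E[|f(\tilde q^{\epsilon,\Delta t}(t_n))|^2]\le(t-t_n)^2\,\E[|f(\tilde q^{\epsilon,\Delta t}(t_n))|^2]$; and the third, by It\^o's isometry and the boundedness of $\sigma$, by $C\int_{t_n}^{t}e^{-2\frac{t-s}{\epsilon^2}}ds=\frac{C\epsilon^2}{2}(1-e^{-2\tau})\le C(t-t_n)$. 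Combined with the moment bound and the linear growth of $f$, this gives $\epsilon^2\,\E[|\tilde p^{\epsilon,\Delta t}(t)-\tilde p^{\epsilon,\Delta t}(t_n)|^2]\le C(T)(1+|q_0^\epsilon|^2+|p_0^\epsilon|^2)\Delta t$.

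The position increment is the delicate point, and the step I expect to be the main obstacle. One has $\tilde q^{\epsilon,\Delta t}(t)-\tilde q^{\epsilon,\Delta t}(t_n)=\frac{1}{\epsilon}\int_{t_n}^{t}\tilde p^{\epsilon,\Delta t}(r)dr$, and a crude estimate using only $\sup_{r\in[t_n,t_{n+1}]}\E[|\tilde p^{\epsilon,\Delta t}(r)|^2]\le C(T)(1+|q_0^\epsilon|^2+|p_0^\epsilon|^2)$ would merely give the useless bound $\frac{(t-t_n)^2}{\epsilon^2}C(T)(\dots)$, which blows up as $\epsilon\to0$. The averaging effect must therefore be made explicit: I would substitute the formula for $\tilde p^{\epsilon,\Delta t}(r)$, integrate the two deterministic terms in $r$ exactly (producing the factor $\epsilon^2(1-e^{-\frac{t-t_n}{\epsilon^2}})$ that cancels the $1/\epsilon$ prefactor), and apply the stochastic Fubini theorem to the double stochastic integral exactly as in the proof of Proposition~\ref{propo:SDEtilde}, namely
\[
\int_{t_n}^{t}\int_{t_n}^{r}e^{-\frac{r-s}{\epsilon^2}}d\beta(s)\,dr=\int_{t_n}^{t}\epsilon^2\bigl(1-e^{-\frac{t-s}{\epsilon^2}}\bigr)d\beta(s).
\]
This leaves the three terms $\epsilon\bigl(1-e^{-\frac{t-t_n}{\epsilon^2}}\bigr)\tilde p^{\epsilon,\Delta t}(t_n)$, $\bigl(\int_{t_n}^{t}(1-e^{-\frac{r-t_n}{\epsilon^2}})dr\bigr)f(\tilde q^{\epsilon,\Delta t}(t_n))$ and $\sigma(\tilde q^{\epsilon,\Delta t}(t_n))\int_{t_n}^{t}(1-e^{-\frac{t-s}{\epsilon^2}})d\beta(s)$, whose $L^2$-norms squared are bounded, respectively, by $\epsilon^2(1-e^{-\tau})^2\,\E[|\tilde p^{\epsilon,\Delta t}(t_n)|^2]\le(t-t_n)\,\E[|\tilde p^{\epsilon,\Delta t}(t_n)|^2]$, by $(t-t_n)^2\,\E[|f(\tilde q^{\epsilon,\Delta t}(t_n))|^2]$ (using $0\le 1-e^{-\frac{r-t_n}{\epsilon^2}}\le1$), and by $C\int_{t_n}^{t}(1-e^{-\frac{t-s}{\epsilon^2}})^2ds\le C(t-t_n)$ (It\^o's isometry, boundedness of $\sigma$). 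Together with the moment bounds and the linear growth of $f$ this yields $\E[|\tilde q^{\epsilon,\Delta t}(t)-\tilde q^{\epsilon,\Delta t}(t_n)|^2]\le C(T)(1+|q_0^\epsilon|^2+|p_0^\epsilon|^2)\Delta t$, and adding the two estimates gives \eqref{eq:lem-regtilde}. Every term is handled by the same routine arguments as in Lemma~\ref{lem:reg}; the only genuinely non-trivial point is that, for the position increment, one must exploit the explicit solution of the frozen-coefficient Ornstein--Uhlenbeck-type equation rather than any soft bound on $\tilde p^{\epsilon,\Delta t}$.
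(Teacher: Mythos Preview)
Your proof is correct and essentially follows the same strategy as the paper. The only cosmetic difference is that the paper first passes to the auxiliary variables $\tilde Q^{\epsilon,\Delta t}=\tilde q^{\epsilon,\Delta t}+\epsilon\tilde p^{\epsilon,\Delta t}$ and $\tilde P^{\epsilon,\Delta t}=\epsilon\tilde p^{\epsilon,\Delta t}$, so that the $\tilde Q$-increment is simply $\int_{t_n}^t f(q_n^{\epsilon,\Delta t})\,ds+\int_{t_n}^t\sigma(q_n^{\epsilon,\Delta t})\,d\beta(s)$ without exponential factors, whereas you work directly with $\tilde q^{\epsilon,\Delta t}$ and recover the same decomposition via the stochastic Fubini step from Proposition~\ref{propo:SDEtilde}; the resulting three terms and their bounds coincide.
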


\begin{proof}[Proof of Lemma~\ref{lem:regtilde}]
Introduce the auxiliary variables $\tilde{Q}^{\epsilon,\Delta t}(t)=\tilde{q}^{\epsilon,\Delta t}(t)+\epsilon\tilde{p}^{\epsilon,\Delta t}(t)$ and $\tilde{P}^{\epsilon,\Delta t}(t)=\epsilon\tilde{p}^{\epsilon}(t)$. The inequality~\eqref{eq:lem-regtilde} is a straightforward consequence of the following claim: for all $T\in(0,\infty)$, there exists $C(T)\in(0,\infty)$ such that for all $\epsilon\in(0,\epsilon_0)$, all $\Delta t\in(0,\Delta t_0)$ and all $n\in\{0,\ldots,N-1\}$ and all $t\in[t_n,t_{n+1}]$, one has
\begin{equation}\label{eq:lem-regtilde-claim}
\E[|\tilde{Q}^{\epsilon,\Delta t}(t)-\tilde{Q}^{\epsilon,\Delta t}(t_n)|^2]+\E[|\tilde{P}^{\epsilon,\Delta t}(t)-\tilde{P}^{\epsilon,\Delta t}(t_n)|^2]\le C(T)(1+|q_0^\epsilon|^2+|p_0^\epsilon|^2)\Delta t.
\end{equation}
Let us establish the claim~\eqref{eq:lem-regtilde-claim}. Recall that $\tilde{q}^{\epsilon,\Delta t}(t_n)=q_n^{\epsilon,\Delta t}$ and $\tilde{p}^{\epsilon,\Delta t}(t_n)=p_n^{\epsilon,\Delta t}$ for all $n\in\{0,\ldots,N-1\}$, owing to Proposition~\ref{propo:SDEtilde}.

On the one hand, for all $t\in[t_n,t_{n+1}]$, one has
\begin{align*}
\tilde{Q}^{\epsilon,\Delta t}(t)-\tilde{Q}^{\epsilon,\Delta t}(t_n)=\int_{s_1}^{s_2}f(q_n^{\epsilon,\Delta t})ds+\int_{s_1}^{s_2}\sigma(q_n^{\epsilon,\Delta t})d\beta(s),
\end{align*}
therefore using It\^o's isometry formula, the Lipschitz continuity property of the mappings $f$ and $\sigma$ (Assumption~\ref{ass:f-sigma}) and the moment bounds~\eqref{eq:moment-scheme-weak} from Proposition~\ref{propo:moment-scheme-weak}, one obtains
\[
\E[|\tilde{Q}^{\epsilon,\Delta t}(t)-\tilde{Q}^{\epsilon,\Delta t}(t_n)|^2]\le C(T)\int_{t_n}^{t}\bigl(1+\E[|q_n^{\epsilon,\Delta t}|^2]\bigr)ds\le C(T)(1+|q_0^\epsilon|^2+|p_0^\epsilon|^2)\Delta t.
\]
On the other hand, for all $t\in[t_n,t_{n+1}]$, one has
\begin{align*}
\tilde{P}^{\epsilon,\Delta t}(t)-\tilde{P}^{\epsilon,\Delta t}(t_n)=\bigl(e^{-\frac{t-t_n}{\epsilon^2}}-1\bigr)\tilde{P}^{\epsilon,\Delta t}(t_n)+\int_{t_n}^{t}e^{-\frac{t-s}{\epsilon^2}}f(q_n^{\epsilon,\Delta t})ds+\int_{t_n}^{t}e^{-\frac{t-s}{\epsilon^2}}\sigma(q_n^{\epsilon,\Delta t})d\beta(s).
\end{align*}
Using the inequality~\eqref{ineq1}, the identity $\tilde{P}^{\epsilon,\Delta t}(t_n)=\epsilon p_n^{\epsilon,\Delta t}$, It\^o's isometry formula, the Lipschitz continuity property of the mappings $f$ and $\sigma$ (Assumption~\ref{ass:f-sigma}) and the moment bounds~\eqref{eq:moment-scheme-weak} from Proposition~\ref{propo:moment-scheme-weak}, one obtains
\begin{align*}
\E[|\tilde{P}^{\epsilon,\Delta t}(t)-\tilde{P}^{\epsilon,\Delta t}(t_n)|^2]&\le C\Delta t\E[|p_n^{\epsilon,\Delta t}|^2]+C(T)\int_{t_n}^{t}\bigl(1+\E[|q_n^{\epsilon,\Delta t}|^2]\bigr)ds\\
&\le C(T)(1+|q_0^\epsilon|^2+|p_0^\epsilon|^2)\Delta t.
\end{align*}
Gathering the estimates yields the claim~\eqref{eq:lem-regtilde-claim} and then concludes the proof of Lemma~\ref{lem:reg}.
\end{proof}

\begin{proof}[Proof of Theorem~\ref{theo:weak}]
Owing to the expression~\eqref{eq:weakerror} of the weak error using the function $u^\epsilon$ defined by~\eqref{eq:u}, the auxiliary process defined by~\eqref{eq:SDEtilde} and a standard telescoping sum argument, one has the following decomposition of the weak error: for all $n\in\{0,\ldots,N\}$,
\begin{equation}\label{eq:weakerrordecomp}
\begin{aligned}
\E[\varphi(q_n^{\epsilon,\Delta t})]&-\E[\varphi(q^\epsilon(t_n))]
=\E[u^\epsilon(0,q_n^{\epsilon,\Delta t},p_n^{\epsilon,\Delta t})]
-\E[u^\epsilon(t_n,q_0^{\epsilon,\Delta t},p_0^{\epsilon,\Delta t})]\\
&=\sum_{m=0}^{n-1}\bigl(\E[u^\epsilon(t_n-t_{m+1},q_{m+1}^{\epsilon,\Delta t},p_{m+1}^{\epsilon,\Delta t})]-\E[u^\epsilon(t_n-t_{m},q_{m}^{\epsilon,\Delta t},p_{m}^{\epsilon,\Delta t})]\bigr)\\
&=\sum_{m=0}^{n-1}\bigl(\E[u^\epsilon(t_n-t_{m+1},\tilde{q}^{\epsilon,\Delta t}(t_{m+1}),\tilde{p}^{\epsilon,\Delta t}(t_{m+1}))]-\E[u^\epsilon(t_n-t_{m},\tilde{q}^{\epsilon,\Delta t}(t_{m}),\tilde{p}^{\epsilon,\Delta t}(t_{m}))]\bigr).
\end{aligned}
\end{equation}
Using It\^o's formula, the fact that $u^\epsilon$ is solution of the Kolmogorov equation~\eqref{eq:Kolmogorov} and the fact that the process $\bigl(\tilde{q}^{\epsilon,\Delta t}(t),\tilde{p}^{\epsilon,\Delta t}(t)\bigr)_{t\ge 0}$ is solution of the SDE~\eqref{eq:SDEtilde}, for all $m\in\{0,\ldots,n-1\}$, one obtains
\[
\E[u^\epsilon(t_n-t_{m+1},\tilde{q}^{\epsilon,\Delta t}(t_{m+1}),\tilde{p}^{\epsilon,\Delta t}(t_{m+1}))]-\E[u^\epsilon(t_n-t_{m},\tilde{q}^{\epsilon,\Delta t}(t_{m}),\tilde{p}^{\epsilon,\Delta t}(t_{m}))]=\mathcal{E}_{m,n,1}^{\epsilon,\Delta t}+\mathcal{E}_{m,n,2}^{\epsilon,\Delta t}
\]
where the error terms in the right-hand side above are defined for all $0\le m\le n-1$ by
\begin{align*}
\mathcal{E}_{m,n,1}^{\epsilon,\Delta t}&=\frac{1}{\epsilon}\int_{t_m}^{t_{m+1}}\E[\nabla_pu^\epsilon(t_n-t,\tilde{q}^{\epsilon,\Delta t}(t),\tilde{p}^{\epsilon,\Delta t}(t))\cdot\bigl(f(\tilde{q}^{\epsilon,\Delta t}(t_m))-f(\tilde{q}^{\epsilon,\Delta t}(t))\bigr)]dt\\
\mathcal{E}_{m,n,2}^{\epsilon,\Delta t}&=\frac{1}{\epsilon^2}\int_{t_m}^{t_{m+1}}\E[\nabla_{p}^2u^\epsilon(t_n-t,\tilde{q}^{\epsilon,\Delta t}(t),\tilde{p}^{\epsilon,\Delta t}(t)):\bigl(a(\tilde{q}^{\epsilon,\Delta t}(t_m))-a(\tilde{q}^{\epsilon,\Delta t}(t))\bigr)]dt.
\end{align*}
Observe that the error term $\mathcal{E}_{m,n,1}^{\epsilon,\Delta t}$, resp. $\mathcal{E}_{m,n,2}^{\epsilon,\Delta t}$, vanishes if the mapping $f$ is constant, resp. if the mapping $\sigma$ is constant. This is due to the construction of the numerical scheme~\eqref{eq:scheme-weak}.

The error terms $\mathcal{E}_{m,n,1}^{\epsilon,\Delta t}$ and $\mathcal{E}_{m,n,2}^{\epsilon,\Delta t}$ are then decomposed as follows: set
\begin{align*}
\mathcal{E}_{m,n,1,1}^{\epsilon,\Delta t}&=\frac{1}{\epsilon}\int_{t_m}^{t_{m+1}}\E[\nabla_pu^\epsilon(t_n-t,\tilde{q}^{\epsilon,\Delta t}(t_m),\tilde{p}^{\epsilon,\Delta t}(t_m))\cdot\bigl(f(\tilde{q}^{\epsilon,\Delta t}(t_m))-f(\tilde{q}^{\epsilon,\Delta t}(t))\bigr)]dt\\
\mathcal{E}_{m,n,1,2}^{\epsilon,\Delta t}&=\mathcal{E}_{m,n,1}^{\epsilon,\Delta t}-\mathcal{E}_{m,n,1,1}^{\epsilon,\Delta t},\\
\mathcal{E}_{m,n,2,1}^{\epsilon,\Delta t}&=\frac{1}{\epsilon^2}\int_{t_m}^{t_{m+1}}\E[\nabla_{p}^2u^\epsilon(t_n-t,\tilde{q}^{\epsilon,\Delta t}(t_m),\tilde{p}^{\epsilon,\Delta t}(t_m)):\bigl(a(\tilde{q}^{\epsilon,\Delta t}(t_m))-a(\tilde{q}^{\epsilon,\Delta t}(t))\bigr)]dt\\
\mathcal{E}_{m,n,2,2}^{\epsilon,\Delta t}&=\mathcal{E}_{m,n,2}^{\epsilon,\Delta t}-\mathcal{E}_{m,n,2,1}^{\epsilon,\Delta t}.
\end{align*}

It remains to obtain upper bounds for the four error terms.

$\bullet$ Since the mapping $f$ is of class $\mathcal{C}^2$ with bounded second-order derivative, a Taylor expansion argument gives
\[
\mathcal{E}_{m,n,1,1}^{\epsilon,\Delta t}=\mathcal{E}_{m,n,1,1,1}^{\epsilon,\Delta t}+\mathcal{E}_{m,n,1,1,2}^{\epsilon,\Delta t},
\]
where $\mathcal{E}_{m,n,1,1,1}^{\epsilon,\Delta t}$ is defined by
\[
\mathcal{E}_{m,n,1,1,1}^{\epsilon,\Delta t}=\frac{1}{\epsilon}\int_{t_m}^{t_{m+1}}\E[\nabla_pu^\epsilon(t_n-t,\tilde{q}^{\epsilon,\Delta t}(t_m),\tilde{p}^{\epsilon,\Delta t}(t_m))\cdot\Bigl(Df(\tilde{q}^{\epsilon,\Delta t}(t_m)).\bigl(\tilde{q}^{\epsilon,\Delta t}(t_m)-\tilde{q}^{\epsilon,\Delta t}(t)\bigr)\Bigr)]dt
\]
and where $\mathcal{E}_{m,n,1,1,2}^{\epsilon,\Delta t}$ satisfies
\begin{align*}
|\mathcal{E}_{m,n,1,1,2}^{\epsilon,\Delta t}|&\le C(T)\vvvert\varphi\vvvert_1\int_{t_m}^{t_{m+1}}\E[|\tilde{q}^{\epsilon,\Delta t}(t_m)-\tilde{q}^{\epsilon,\Delta t}(t)|^2]dt\\
&\le C(T)\vvvert\varphi\vvvert_1(1+|q_0^0|^2)\Delta t^2
\end{align*}
using first the inequality~\eqref{eq:K1} from Proposition~\ref{propo:Kolmogorov} and second the inequality~\eqref{eq:lem-regtilde} from Lemma~\ref{lem:regtilde} and Assumption~\ref{ass:init}.

To deal with the error term $\mathcal{E}_{m,n,1,1,1}^{\epsilon,\Delta t}$, a conditional expectation argument is used. For all $t\in[t_m,t_{m+1}]$, solving the auxiliary stochastic differential equation~\eqref{eq:SDEtilde}, one has
\begin{equation}\label{eq:incrementstilde}
\left\lbrace
\begin{aligned}
\tilde{q}^{\epsilon,\Delta t}(t)-\tilde{q}^{\epsilon,\Delta t}(t_m)&=\frac{1}{\epsilon}\int_{t_m}^{t}\tilde{p}^{\epsilon,\Delta t}(s)ds=\epsilon\bigl(\tilde{p}^{\epsilon,\Delta t}(t_m)-\tilde{p}^{\epsilon,\Delta t}(t)\bigr)+\int_{t_m}^{t}f(q_m^{\epsilon,\Delta t})ds+\int_{t_m}^{t}\sigma(q_m^{\epsilon,\Delta t})d\beta(s)\\
\tilde{p}^{\epsilon,\Delta t}(t)-\tilde{p}^{\epsilon,\Delta t}(t_m)&=(e^{-\frac{t-t_m}{\epsilon^2}}-1)\tilde{p}^{\epsilon,\Delta t}(t_m)+\frac{1}{\epsilon}\int_{t_m}^{t}e^{-\frac{t-s}{\epsilon^2}}f(q_m^{\epsilon,\Delta t})ds+\frac{1}{\epsilon}\int_{t_m}^{t}e^{-\frac{t-s}{\epsilon^2}}\sigma(q_m^{\epsilon,\Delta t})d\beta(s).
\end{aligned}
\right.
\end{equation}
Therefore, the error term $\mathcal{E}_{m,n,1,1,1}^{\epsilon,\Delta t}$ can be rewritten as
\begin{align*}
\mathcal{E}_{m,n,1,1,1}^{\epsilon,\Delta t}&=\int_{t_m}^{t_{m+1}}\E[\nabla_pu^\epsilon(t_n-t,\tilde{q}^{\epsilon,\Delta t}(t_m),\tilde{p}^{\epsilon,\Delta t}(t_m))\cdot\Bigl(Df(\tilde{q}^{\epsilon,\Delta t}(t_m)).\bigl(\tilde{p}^{\epsilon,\Delta t}(t)-\tilde{p}^{\epsilon,\Delta t}(t_m)\bigr)\Bigr)]dt\\
&-\frac{1}{\epsilon}\int_{t_m}^{t_{m+1}}\E[\nabla_pu^\epsilon(t_n-t,\tilde{q}^{\epsilon,\Delta t}(t_m),\tilde{p}^{\epsilon,\Delta t}(t_m))\cdot\Bigl(Df(\tilde{q}^{\epsilon,\Delta t}(t_m)).\int_{t_m}^{t}f(q_m^{\epsilon,\Delta t})ds\Bigr)]dt\\
&=\int_{t_m}^{t_{m+1}}\E[\nabla_pu^\epsilon(t_n-t,\tilde{q}^{\epsilon,\Delta t}(t_m),\tilde{p}^{\epsilon,\Delta t}(t_m))\cdot\Bigl(Df(\tilde{q}^{\epsilon,\Delta t}(t_m)).\bigl((e^{-\frac{t-t_m}{\epsilon^2}}-1)\tilde{p}^{\epsilon,\Delta t}(t_m)\bigr)\Bigr)]dt\\
&+\frac{1}{\epsilon}\int_{t_m}^{t_{m+1}}\E[\nabla_pu^\epsilon(t_n-t,\tilde{q}^{\epsilon,\Delta t}(t_m),\tilde{p}^{\epsilon,\Delta t}(t_m))\cdot\Bigl(Df(\tilde{q}^{\epsilon,\Delta t}(t_m)).\int_{t_m}^{t}\bigl(e^{-\frac{t-s}{\epsilon^2}}-1\bigr)f(q_m^{\epsilon,\Delta t})ds\Bigr)]dt,
\end{align*}
since the terms with $\int_{t_m}^{t}\sigma(q_m^{\epsilon,\Delta t})d\beta(s)$ and $\int_{t_m}^{t}e^{-\frac{t-s}{\epsilon^2}}\sigma(q_m^{\epsilon,\Delta t})d\beta(s)$ vanish in expectation.

Using the inequality~\eqref{eq:K1} from Proposition~\ref{propo:Kolmogorov}, the boundedness of $Df$, the moment bounds~\eqref{eq:moment-scheme-weak} from Proposition~\ref{propo:moment-scheme-weak} and Assumption~\ref{ass:init}, one obtains
\begin{align*}
|\mathcal{E}_{m,n,1,1,1}^{\epsilon,\Delta t}|&\le C(T)\vvvert\varphi\vvvert_1\int_{t_m}^{t_{m+1}}\epsilon\bigl(1-e^{-\frac{t-t_m}{\epsilon^2}}\bigr)dt\E[|\tilde{p}^{\epsilon,\Delta t}(t_m)|]\\
&+C(T)\vvvert\varphi\vvvert_1\Delta t^2\bigl(1+\E[|\tilde{q}^{\epsilon,\Delta t}(t_m)|]\bigr)\\
&\le C(T)\vvvert\varphi\vvvert_1\Bigl(\int_{0}^{\Delta t}\epsilon\bigl(1-e^{-\frac{t}{\epsilon^2}}\bigr)dt+\Delta t^2\Bigr)\bigl(1+|q_0^\epsilon|^2+|p_0^\epsilon|^2\bigr)\\
&\le C(T)\Delta t\vvvert\varphi\vvvert_1\Bigl(\mathcal{R}(\epsilon,\Delta t)+\Delta t\Bigr)\bigl(1+|q_0^0|^2\bigr),
\end{align*}
where we recall that $\mathcal{R}(\epsilon,\Delta t)$ is defined by~\eqref{eq:R}.

$\bullet$ Owing to the inequality~\eqref{eq:K2} from Proposition~\ref{propo:Kolmogorov}, for all $m\in\{0,\ldots,n-1\}$ and using the Lipschitz continuity of $f$, one obtains
\begin{align*}
|\mathcal{E}_{m,n,1,2}^{\epsilon,\Delta t}|&\le C(T)\vvvert\varphi\vvvert_2 \int_{t_m}^{t_{m+1}}\E[|\tilde{q}^{\epsilon,\Delta t}(t)-\tilde{q}^{\epsilon,\Delta t}(t_m)|^2]dt\\
&+C(T)\epsilon\vvvert\varphi\vvvert_2 \int_{t_m}^{t_{m+1}}\E[|\tilde{q}^{\epsilon,\Delta t}(t)-\tilde{q}^{\epsilon,\Delta t}(t_m)||\tilde{p}^{\epsilon,\Delta t}(t)-\tilde{p}^{\epsilon,\Delta t}(t_m)|]dt.
\end{align*}
Using the inequality~\eqref{eq:lem-regtilde} from Lemma~\ref{lem:regtilde} and Assumption~\ref{ass:init}, one then obtains the upper bound
\[
|\mathcal{E}_{m,n,1,2}^{\epsilon,\Delta t}|\le C(T)\vvvert\varphi\vvvert_2(1+|q_0^0|^2)\Delta t^2.
\]

$\bullet$ Since the mapping $a$ is of class $\mathcal{C}^2$ with bounded second-order derivative, a Taylor expansion argument gives
\[
\mathcal{E}_{m,n,2,1}^{\epsilon,\Delta t}=\mathcal{E}_{m,n,2,1,1}^{\epsilon,\Delta t}+\mathcal{E}_{m,n,2,1,2}^{\epsilon,\Delta t},
\]
where $\mathcal{E}_{m,n,2,1,1}^{\epsilon,\Delta t}$ is defined by
\[
\mathcal{E}_{m,n,2,1,1}^{\epsilon,\Delta t}=\frac{1}{\epsilon^2}\int_{t_m}^{t_{m+1}}\E[\nabla_{p}^2u^\epsilon(t_n-t,\tilde{q}^{\epsilon,\Delta t}(t_m),\tilde{p}^{\epsilon,\Delta t}(t_m)):\Bigl(Da(\tilde{q}^{\epsilon,\Delta t}(t_m)).\bigl(\tilde{q}^{\epsilon,\Delta t}(t_m)-\tilde{q}^{\epsilon,\Delta t}(t)\bigr)\Bigr)]dt
\]
and where $\mathcal{E}_{m,n,2,1,2}^{\epsilon,\Delta t}$ satisfies
\begin{align*}
|\mathcal{E}_{m,n,2,1,2}^{\epsilon,\Delta t}|&\le C(T)\vvvert\varphi\vvvert_2\int_{t_m}^{t_{m+1}}\E[|\tilde{q}^{\epsilon,\Delta t}(t_m)-\tilde{q}^{\epsilon,\Delta t}(t)|^2]dt\\
&\le C(T)\vvvert\varphi\vvvert_2(1+|q_0^0|^2)\Delta t^2
\end{align*}
using first the inequality~\eqref{eq:K2} from Proposition~\ref{propo:Kolmogorov} and second the inequality~\eqref{eq:lem-regtilde} from Lemma~\ref{lem:regtilde} and Assumption~\ref{ass:init}.

Like in the treatment of the error term $\mathcal{E}_{m,n,1,1,1}^{\epsilon,\Delta t}$ above, a conditional expectation argument is used to deal with the error term $\mathcal{E}_{m,n,2,1,1}^{\epsilon,\Delta t}$. The expressions~\eqref{eq:incrementstilde} of $\tilde{q}^{\epsilon,\Delta t}(t)-\tilde{q}^{\epsilon,\Delta t}(t_m)$ and $\tilde{p}^{\epsilon,\Delta t}(t)-\tilde{p}^{\epsilon,\Delta t}(t_m)$ give
\begin{align*}
\mathcal{E}_{m,n,2,1,1}^{\epsilon,\Delta t}&=\frac{1}{\epsilon}\int_{t_m}^{t_{m+1}}\E[\nabla_{p}^2u^\epsilon(t_n-t,\tilde{q}^{\epsilon,\Delta t}(t_m),\tilde{p}^{\epsilon,\Delta t}(t_m)):\Bigl(Da(\tilde{q}^{\epsilon,\Delta t}(t_m)).\bigl(\tilde{p}^{\epsilon,\Delta t}(t)-\tilde{p}^{\epsilon,\Delta t}(t_m)\bigr)\Bigr)]dt\\
&-\frac{1}{\epsilon^2}\int_{t_m}^{t_{m+1}}\E[\nabla_{p}^2u^\epsilon(t_n-t,\tilde{q}^{\epsilon,\Delta t}(t_m),\tilde{p}^{\epsilon,\Delta t}(t_m)):\Bigl(Da(\tilde{q}^{\epsilon,\Delta t}(t_m)).\int_{t_m}^{t}f(q_m^{\epsilon,\Delta t})ds\Bigr)]dt\\
&=\frac{1}{\epsilon}\int_{t_m}^{t_{m+1}}\E[\nabla_{p}^2u^\epsilon(t_n-t,\tilde{q}^{\epsilon,\Delta t}(t_m),\tilde{p}^{\epsilon,\Delta t}(t_m)):\Bigl(Da(\tilde{q}^{\epsilon,\Delta t}(t_m)).\bigl((e^{-\frac{t-t_m}{\epsilon^2}}-1)\tilde{p}^{\epsilon,\Delta t}(t_m)\bigr)\Bigr)]dt\\
&+\frac{1}{\epsilon^2}\int_{t_m}^{t_{m+1}}\E[\nabla_{p}^2u^\epsilon(t_n-t,\tilde{q}^{\epsilon,\Delta t}(t_m),\tilde{p}^{\epsilon,\Delta t}(t_m)):\Bigl(Da(\tilde{q}^{\epsilon,\Delta t}(t_m)).\int_{t_m}^{t}\bigl(e^{-\frac{t-s}{\epsilon^2}}-1\bigr)f(q_m^{\epsilon,\Delta t})ds\Bigr)]dt.
\end{align*}
Using the inequality~\eqref{eq:K2} from Proposition~\ref{propo:Kolmogorov}, the boundedness of $Da$, and the moment bounds~\eqref{eq:moment-scheme-weak} from Proposition~\ref{propo:moment-scheme-weak}, one obtains
\begin{align*}
|\mathcal{E}_{m,n,2,1,1}^{\epsilon,\Delta t}|&\le C(T)\vvvert\varphi\vvvert_2\int_{t_m}^{t_{m+1}}\epsilon\bigl(1-e^{-\frac{t-t_m}{\epsilon^2}}\bigr)dt\E[|\tilde{p}^{\epsilon,\Delta t}(t_m)|]\\
&+C(T)\vvvert\varphi\vvvert_2\Delta t^2\bigl(1+\E[|\tilde{q}^{\epsilon,\Delta t}(t_m)|]\bigr)\\
&\le C(T)\vvvert\varphi\vvvert_2\Bigl(\int_{0}^{\Delta t}\epsilon\bigl(1-e^{-\frac{t}{\epsilon^2}}\bigr)dt+\Delta t^2\Bigr)\bigl(1+|q_0^\epsilon|^2+|p_0^\epsilon|^2\bigr)\\
&\le C(T)\Delta t\vvvert\varphi\vvvert_2\Bigl(\mathcal{R}(\epsilon,\Delta t)+\Delta t\Bigr)\bigl(1+|q_0^0|^2\bigr).
\end{align*}

$\bullet$ Owing to the inequality~\eqref{eq:K3} from Proposition~\ref{propo:Kolmogorov}, for all $m\in\{0,\ldots,n-1\}$ and using the Lipschitz continuity property~\eqref{eq:aLip} of $a$, one obtains
\begin{align*}
|\mathcal{E}_{m,n,2,2}^{\epsilon,\Delta t}|&\le C(T)\vvvert\varphi\vvvert_3 \int_{t_m}^{t_{m+1}}\E\bigl[|\tilde{q}^{\epsilon,\Delta t}(t)-\tilde{q}^{\epsilon,\Delta t}(t_m)|^2\bigr]dt\\
&+C(T)\epsilon\vvvert\varphi\vvvert_3 \int_{t_m}^{t_{m+1}}\E\bigl[|\tilde{q}^{\epsilon,\Delta t}(t)-\tilde{q}^{\epsilon,\Delta t}(t_m)||\tilde{p}^{\epsilon,\Delta t}(t)-\tilde{p}^{\epsilon,\Delta t}(t_m)|\bigr]dt.
\end{align*}
Using the inequality~\eqref{eq:lem-regtilde} from Lemma~\ref{lem:regtilde} and Assumption~\ref{ass:init}, one then obtains the upper bound
\[
|\mathcal{E}_{m,n,2,2}^{\epsilon,\Delta t}|\le C(T)\vvvert\varphi\vvvert_3(1+|q_0^0|^2)\Delta t^2.
\]

$\bullet$ Gathering the estimates, one finally obtains
\begin{align*}
\big|\E[\varphi(q_n^{\epsilon,\Delta t})]-\E[\varphi(q^\epsilon(t_n))]\big|&\le \sum_{m=0}^{n-1}\bigl(|\mathcal{E}_{m,n,1}^{\epsilon,\Delta t}|+|\mathcal{E}_{m,n,2}^{\epsilon,\Delta t}|\bigr)\\
&\le C(T)\vvvert\varphi\vvvert_3\Bigl(\mathcal{R}(\epsilon,\Delta t)+\Delta t\Bigr)(1+|q_0^0|^2)
\end{align*}
for all $n\in\{0,\ldots,N-1\}$, where $\mathcal{R}(\epsilon,\Delta t)$ is defined by~\eqref{eq:R}. This concludes the proof of the weak error estimate~\eqref{eq:theo-weak} and the proof of Theorem~\ref{theo:weak}.
\end{proof}

\subsection{Proof of Proposition~\ref{propo:Kolmogorov}}\label{sec:proofKolmogorov}

\begin{proof}[Proof of the inequality~\eqref{eq:K1}]
For all ${\bf h}=(h_q,h_p)\in\R^{2d}$, one has
\[
\nabla_q u^\epsilon(t,q,p).h_q+\nabla_p u^\epsilon(t,q,p).h_p=\nabla_{(q,p)} u^\epsilon(t,q,p).{\bf h}=\E_{q,p}[\nabla_q\varphi(q^\epsilon(t)).\eta_q^{\epsilon,{\bf h}}(t)],
\]
where $t\mapsto \eta^{\epsilon,{\bf h}}(t)=\bigl(\eta_q^{\epsilon,{\bf h}}(t),\eta_p^{\epsilon,{\bf h}}(t)\bigr)\in\R^{2d}$ is solution of the stochastic differential equation
\[
\left\lbrace
\begin{aligned}
d\eta_q^{\epsilon,{\bf h}}(t)&=\frac{\eta_p^{\epsilon,{\bf h}}(t)}{\epsilon}dt\\
d\eta_p^{\epsilon,{\bf h}}(t)&=-\frac{\eta_p^{\epsilon,{\bf h}}(t)}{\epsilon^2}dt+\frac{1}{\epsilon}Df(q^\epsilon(t)).\eta_q^{\epsilon,{\bf h}}(t)dt+\frac{1}{\epsilon}D\sigma(q^\epsilon(t)).\eta_q^{\epsilon,{\bf h}}(t)d\beta(t),
\end{aligned}
\right.
\]
with initial value $\eta^{\epsilon,{\bf h}}(0)={\bf h}$, equivalently $\eta_q^{\epsilon,{\bf h}}(0)=h_q$ and $\eta_p^{\epsilon,{\bf h}}(0)=h_p$.

The inequality~\eqref{eq:K1} is a straightforward consequence of the following claim: for all $m\in\N$, there exists $C_m(T)\in(0,\infty)$ such that
\begin{equation}\label{eq:claimK1}
\underset{(t,q,p)\in[0,T]\times\R^{2d}}\sup~\E_{q,p}[|\eta_q^{\epsilon,{\bf h}}(t)|^{2m}]\le C_m(T)\bigl(|h_q|^{2m}+\epsilon^{2m}|h_p|^{2m}\bigr).
\end{equation}
Indeed, it suffices to choose $m=1$ and to set ${\bf h}=(0,h)$ to obtain~\eqref{eq:K1}. It thus remains to prove the claim~\eqref{eq:claimK1}. First, observe that, for all $t\ge 0$, one has
\[
\eta_p^{\epsilon,{\bf h}}(t)=e^{-\frac{t}{\epsilon^2}}h_p+\frac{1}{\epsilon}\int_0^t e^{-\frac{t-s}{\epsilon^2}}Df(q^\epsilon(s)).\eta_q^{\epsilon,{\bf h}}(s)ds+\frac{1}{\epsilon}\int_0^t e^{-\frac{t-s}{\epsilon^2}}D\sigma(q^\epsilon(s)).\eta_q^{\epsilon,{\bf h}}(s)d\beta(s).
\]
In addition, for all $t\ge 0$, one has
\begin{align*}
\eta_q^{\epsilon,{\bf h}}(t)&=h_q+\frac{1}{\epsilon}\int_0^{t}\eta_p^{\epsilon,{\bf h}}(s)ds\\
&=h_q+\int_0^t Df(q^\epsilon(s)).\eta_q^{\epsilon,{\bf h}}(s)ds+\int_0^t D\sigma(q^{\epsilon,{\bf h}}(s)).\eta_q^{\epsilon,{\bf h}}(s)d\beta(s)+\epsilon\bigl(\eta_p^{\epsilon,{\bf h}}(0)-\eta_p^{\epsilon,{\bf h}}(t)\bigr).
\end{align*}

On the one hand, using the H\"older and Burkholder--Davis--Gundy inequalities, one obtains the following upper bound for $\eta_p^{\epsilon,{\bf h}}(t)$: for all $t\in[0,T]$ one has
\begin{align*}
\E[|\eta_p^{\epsilon,{\bf h}}(t)|^{2m}]\le C_m(T)\Bigl(|h_p|^{2m}+\frac{1}{\epsilon^{2m}}\int_{0}^{t}\E[|\eta_q^{\epsilon,{\bf h}}(s)|^{2m}]ds\Bigr).
\end{align*}
On the other hand, using the H\"older and Burkholder--Davis--Gundy inequalities and the upper bound above for $\eta_p^{\epsilon,{\bf h}}(t)$, one obtains the following upper bound for $\eta_q^{\epsilon,{\bf h}}(t)$: for all $t\in[0,T]$ one has
\[
\E[|\eta_q^{\epsilon,{\bf h}}(t)|^{2m}]\le C_m(T)\Bigl(|h_q|^{2m}+\int_{0}^{t}\E[|\eta_q^{\epsilon,{\bf h}}(s)|^{2m}]ds+\epsilon^{2m}|h_p|^{2m}+\epsilon^{2m}\E[|\eta_p^{\epsilon,{\bf h}}(t)|^{2m}]\Bigr).
\]
Combining the two upper bounds gives the inequality
\[
\E[|\eta_q^{\epsilon,{\bf h}}(t)|^{2m}]\le C_m(T)\Bigl(|h_q|^{2m}+\epsilon^{2m}|h_p|^{2m}+\int_{0}^{t}\E[|\eta_q^{\epsilon,{\bf h}}(s)|^{2m}]ds\Bigr)
\]
for all $t\in[0,T]$. Applying Gronwall's inequality then yields the claim~\eqref{eq:claimK1}. This concludes the proof of the inequality~\eqref{eq:K1}.
\end{proof}

\begin{proof}[Proof of the inequality~\eqref{eq:K2}]
For all ${\bf h}^1=(h_q^1,h_p^1)\in\R^{2d}$ and ${\bf h}^2=(h_q^2,h_p^2)\in\R^{2d}$, one has
\begin{align*}
\nabla_{q,p}^2u^\epsilon(t,q,p).({\bf h}^1,{\bf h}^2)&=\nabla_q\nabla_qu^\epsilon(t,q,p).(h_q^1,h_q^2)+\nabla_p\nabla_pu^\epsilon(t,q,p).(h_p^1,h_p^2)\\
&+\nabla_q\nabla_pu^\epsilon(t,q,p).(h_q^1,h_p^2)+\nabla_p\nabla_qu^\epsilon(t,q,p).(h_p^1,h_q^2)\\
&=\E[\nabla_q \varphi(q^\epsilon(t)).\zeta_q^{\epsilon,{\bf h}^1,{\bf h}^2}(t)]+\E[\nabla_q^2\varphi(q^\epsilon(t)).(\eta_q^{\epsilon,{\bf h}^1}(t),\eta_q^{\epsilon,{\bf h}^2}(t))],
\end{align*}
where $t\mapsto \zeta^{\epsilon,{\bf h}^1,{\bf h}^2}(t)=\bigl(\zeta_q^{\epsilon,{\bf h}^1,{\bf h}^2}(t),\zeta_p^{\epsilon,{\bf h}^1,{\bf h}^2}(t)\bigr)\in\R^{2d}$ is solution of the stochastic differential equation
\[
\left\lbrace
\begin{aligned}
d\zeta_q^{\epsilon,{\bf h}^1,{\bf h}^2}(t)&=\frac{\zeta_p^{\epsilon,{\bf h}^1,{\bf h}^2}(t)}{\epsilon}dt\\
d\zeta_p^{\epsilon,{\bf h}^1,{\bf h}^2}(t)&=-\frac{1}{\epsilon^2}\zeta_p^{\epsilon,{\bf h}^1,{\bf h}^2}(t)dt+\frac{1}{\epsilon}Df(q^\epsilon(t)).\zeta_q^{\epsilon,{\bf h}^1,{\bf h}^2}(t)dt+\frac{1}{\epsilon}D\sigma(q^\epsilon(t)).\zeta_q^{\epsilon,{\bf h}^1,{\bf h}^2}d\beta(t)\\
&+\frac{1}{\epsilon}D^2f(q^\epsilon(t)).(\eta_q^{\epsilon,{\bf h}^1}(t),\eta_q^{\epsilon,{\bf h}^2}(t))dt+\frac{1}{\epsilon}D^2\sigma(q^{\epsilon}(t)).(\eta_q^{\epsilon,{\bf h}^1}(t),\eta_q^{\epsilon,{\bf h}^2}(t))d\beta(t),
\end{aligned}
\right.
\]
with initial value $\zeta^{\epsilon,{\bf h}^1,{\bf h}^2}(0)=0$, equivalently $\zeta_q^{\epsilon,{\bf h}^1,{\bf h}^2}(0)=\zeta_p^{\epsilon,{\bf h}^1,{\bf h}^2}(0)=0$.

The inequality~\eqref{eq:K2} is a straightforward consequence of the inequality~\eqref{eq:K1} and of the following claim: for all $m\in\N$, there exists $C_m(T)\in(0,\infty)$ such that
\begin{equation}\label{eq:claimK2}
\underset{(t,q,p)\in[0,T]\times\R^{2d}}\sup~\E_{q,p}[|\zeta_q^{\epsilon,{\bf h}^1,{\bf h}^2}(t)|^{2m}]\le C_m(T)\bigl(|h_q^1|^{2m}+\epsilon^{2m}|h_p^1|^{2m}\bigr)\bigl(|h_q^2|^{2m}+\epsilon^{2m}|h_p^2|^{2m}\bigr).
\end{equation}
Indeed, it suffices to apply the inequality~\eqref{eq:claimK1} with $m=1$ combined with the Cauchy--Schwarz inequality, and the inequality~\eqref{eq:claimK2} with $m=1$, and to set either ${\bf h}^1=(h^1,0), {\bf h}^2=(0,h^2)$, or ${\bf h}^1=(0,h^1), {\bf h}^2=(0,h^2)$, to obtain~\eqref{eq:K2}. It thus remains to prove the claim~\eqref{eq:claimK2}. First, observe that, for all $t\ge 0$, one has
\begin{align*}
\zeta_p^{\epsilon,{\bf h}^1,{\bf h}^2}(t)&=\frac{1}{\epsilon}\int_{0}^{t}e^{-\frac{t-s}{\epsilon^2}}Df(q^\epsilon(s)).\zeta_q^{\epsilon,{\bf h}^1,{\bf h}^2}(s)ds+\frac{1}{\epsilon}\int_{0}^{t}e^{-\frac{t-s}{\epsilon^2}}D\sigma(q^\epsilon(s)).\zeta_q^{\epsilon,{\bf h}^1,{\bf h}^2}(s)d\beta(s)\\
&+\frac{1}{\epsilon}\int_{0}^{t}e^{-\frac{t-s}{\epsilon^2}}D^2f(q^\epsilon(s)).(\eta_q^{\epsilon,{\bf h}^1}(s),\eta_q^{\epsilon,{\bf h}^2}(s))ds+\frac{1}{\epsilon}\int_{0}^{t}e^{-\frac{t-s}{\epsilon^2}}D^2\sigma(q^\epsilon(s)).(\eta_q^{\epsilon,{\bf h}^1}(s),\eta_q^{\epsilon,{\bf h}^2}(s))d\beta(s).
\end{align*}
In addition, for all $t\ge 0$, one has
\begin{align*}
\zeta_q^{\epsilon,{\bf h}^1,{\bf h}^2}(t)=\frac{1}{\epsilon}\int_{0}^{t}&\zeta_p^{\epsilon,{\bf h}^1,{\bf h}^2}(s)ds=-\epsilon \zeta_p^{\epsilon,{\bf h}^1,{\bf h}^2}(t)\\
&+\int_{0}^{t}Df(q^\epsilon(s)).\zeta_q^{\epsilon,{\bf h}^1,{\bf h}^2}(s)ds+\int_{0}^{t}D\sigma(q^\epsilon(s)).\zeta_q^{\epsilon,{\bf h}^1,{\bf h}^2}(s)d\beta(s)\\
&+\int_{0}^{t}D^2f(q^\epsilon(s)).(\eta_q^{\epsilon,{\bf h}^1}(s),\eta_q^{\epsilon,{\bf h}^2}(s))ds+\int_{0}^{t}D^2\sigma(q^\epsilon(s)).(\eta_q^{\epsilon,{\bf h}^1}(s),\eta_q^{\epsilon,{\bf h}^2}(s))d\beta(s).
\end{align*}
On the one hand, using the H\"older and Burkholder--Davis--Gundy inequalities, one obtains the following upper bound for $\zeta_p^{\epsilon,{\bf h}^1,{\bf h}^2}(t)$: for all $t\in[0,T]$ one has
\begin{align*}
\E[|\zeta_p^{\epsilon,{\bf h}^1,{\bf h}^2}(t)|^{2m}]&\le \frac{C_m(T)}{\epsilon^{2m}}\Bigl(\int_{0}^{t}\E[|\zeta_q^{\epsilon,{\bf h}^1,{\bf h}^2}(s)|^{2m}]ds+\int_0^t\E[|\eta_q^{\epsilon,{\bf h}^1}(s)|^{2m}|\eta_q^{\epsilon,{\bf h}^2}(s)|^{2m}]ds\Bigr)\\
&\le \frac{C_m(T)}{\epsilon^{2m}}\Bigl(\int_{0}^{t}\E[|\zeta_q^{\epsilon,{\bf h}^1,{\bf h}^2}(s)|^{2m}]ds+\bigl(|h_q^1|^{2m}+\epsilon^{2m}|h_p^1|^{2m}\bigr)\bigl(|h_q^2|^{2m}+\epsilon^{2m}|h_p^2|^{2m}\bigr)\Bigr),
\end{align*}
where the inequality~\eqref{eq:claimK1} is used in the second step above.

On the other hand, using the H\"older and Burkholder--Davis--Gundy inequalities, one obtains the following upper bound for $\zeta_q^{\epsilon,{\bf h}^1,{\bf h}^2}(t)$: for all $t\in[0,T]$ one has
\begin{align*}
\E[|\zeta_q^{\epsilon,{\bf h}^1,{\bf h}^2}(t)|^{2m}]
&\le C_m(T)\epsilon^{2m}\E[|\zeta_p^{\epsilon,{\bf h}^1,{\bf h}^2}(t)|^{2m}]\\
&+C_m(T)\int_{0}^{t}\E[|\zeta_q^{\epsilon,{\bf h}^1,{\bf h}^2}(s)|^{2m}]ds+C_m(T)\int_0^t\E[|\eta_q^{\epsilon,{\bf h}^1}(s)|^{2m}|\eta_q^{\epsilon,{\bf h}^2}(s)|^{2m}]ds\\
&\le C_m(T)\epsilon^{2m}\E[|\zeta_p^{\epsilon,{\bf h}^1,{\bf h}^2}(t)|^{2m}]\\
&+C_m(T)\Bigl(\int_{0}^{t}\E[|\zeta_q^{\epsilon,{\bf h}^1,{\bf h}^2}(s)|^{2m}]ds+\bigl(|h_q^1|^{2m}+\epsilon^{2m}|h_p^1|^{2m}\bigr)\bigl(|h_q^2|^{2m}+\epsilon^{2m}|h_p^2|^{2m}\bigr)\Bigr),
\end{align*}
where the inequality~\eqref{eq:claimK1} is used in the second step above.

Combining the two upper bounds gives the inequality
\[
\E[|\zeta_q^{\epsilon,{\bf h}^1,{\bf h}^2}(t)|^{2m}]\le C_m(T)\Bigl(\int_{0}^{t}\E[|\zeta_q^{\epsilon,{\bf h}^1,{\bf h}^2}(s)|^{2m}]ds+\bigl(|h_q^1|^{2m}+\epsilon^{2m}|h_p^1|^{2m}\bigr)\bigl(|h_q^2|^{2m}+\epsilon^{2m}|h_p^2|^{2m}\bigr)\Bigr)
\]
for all $t\in[0,T]$. Applying Gronwall's inequality then yields the claim~\eqref{eq:claimK2}. This concludes the proof of the inequality~\eqref{eq:K2}.
\end{proof}

\begin{proof}[Proof of the inequality~\eqref{eq:K3}]
For all ${\bf h}^1=(h_q^1,h_p^1)\in\R^{2d}$, ${\bf h}^2=(h_q^2,h_p^2)\in\R^{2d}$ and ${\bf h}^3=(h_q^3,h_p^3)\in\R^{2d}$, one has
\begin{align*}
\nabla_{q,p}^3u^\epsilon(t,q,p).({\bf h}^1,{\bf h}^2,{\bf h}^3)&=\E[\nabla_q \varphi(q^\epsilon(t)).\xi_q^{\epsilon,{\bf h}^1,{\bf h}^2,{\bf h}^3}(t)]\\
&+\E[\nabla_q^2\varphi(q^\epsilon(t)).(\eta_q^{\epsilon,{\bf h}^1}(t),\zeta_q^{\epsilon,{\bf h}^2,{\bf h}^3}(t))]\\
&+\E[\nabla_q^2\varphi(q^\epsilon(t)).(\eta_q^{\epsilon,{\bf h}^2}(t),\zeta_q^{\epsilon,{\bf h}^3,{\bf h}^1}(t))]\\
&+\E[\nabla_q^2\varphi(q^\epsilon(t)).(\eta_q^{\epsilon,{\bf h}^3}(t),\zeta_q^{\epsilon,{\bf h}^1,{\bf h}^2}(t))]\\
&+\E[\nabla_q^3\varphi(q^\epsilon(t)).\bigl(\eta_q^{\epsilon,{\bf h}^1}(t),\eta_q^{\epsilon,{\bf h}^2}(t),\eta_q^{\epsilon,{\bf h}^3}(t)\bigr)]
\end{align*}
where $t\mapsto \xi^{\epsilon,{\bf h}^1,{\bf h}^2,{\bf h}^3}(t)=\bigl(\xi_q^{\epsilon,{\bf h}^1,{\bf h}^2,{\bf h}^3}(t),\xi_p^{\epsilon,{\bf h}^1,{\bf h}^2,{\bf h}^3}(t)\bigr)\in\R^{2d}$ is solution of the stochastic differential equation
\[
\left\lbrace
\begin{aligned}
d\xi_q^{\epsilon,{\bf h}^1,{\bf h}^2,{\bf h}^3}(t)&=\frac{\xi_p^{\epsilon,{\bf h}^1,{\bf h}^2,{\bf h}^3}(t)}{\epsilon}dt\\
d\xi_p^{\epsilon,{\bf h}^1,{\bf h}^2,{\bf h}^3}(t)&=-\frac{1}{\epsilon^2}\xi_p^{\epsilon,{\bf h}^1,{\bf h}^2,{\bf h}^3}(t)dt+\frac{1}{\epsilon}Df(q^\epsilon(t)).\xi_q^{\epsilon,{\bf h}^1,{\bf h}^2,{\bf h}^3}(t)dt+\frac{1}{\epsilon}D\sigma(q^\epsilon(t)).\xi_q^{\epsilon,{\bf h}^1,{\bf h}^2,{\bf h}^3}d\beta(t)\\
&+\frac{1}{\epsilon}D^2f(q^\epsilon(t)).(\eta_q^{\epsilon,{\bf h}^1}(t),\zeta_q^{\epsilon,{\bf h}^2,{\bf h}^3}(t))dt+\frac{1}{\epsilon}D^2\sigma(q^{\epsilon}(t)).(\eta_q^{\epsilon,{\bf h}^1}(t),\zeta_q^{\epsilon,{\bf h}^2,{\bf h}^3}(t))d\beta(t)\\
&+\frac{1}{\epsilon}D^2f(q^\epsilon(t)).(\eta_q^{\epsilon,{\bf h}^2}(t),\zeta_q^{\epsilon,{\bf h}^3,{\bf h}^1}(t))dt+\frac{1}{\epsilon}D^2\sigma(q^{\epsilon}(t)).(\eta_q^{\epsilon,{\bf h}^2}(t),\zeta_q^{\epsilon,{\bf h}^2,{\bf h}^3}(t))d\beta(t)\\
&+\frac{1}{\epsilon}D^2f(q^\epsilon(t)).(\eta_q^{\epsilon,{\bf h}^3}(t),\eta_q^{\epsilon,{\bf h}^1,{\bf h}^2}(t))dt+\frac{1}{\epsilon}D^2\sigma(q^{\epsilon}(t)).(\eta_q^{\epsilon,{\bf h}^3}(t),\zeta_q^{\epsilon,{\bf h}^1,{\bf h}^2}(t))d\beta(t)\\
&+\frac{1}{\epsilon}D^3f(q^\epsilon(t)).\bigl(\eta_q^{\epsilon,{\bf h}^1}(t),\eta_q^{\epsilon,{\bf h}^2}(t),\eta_q^{\epsilon,{\bf h}^3}(t)\bigr)dt+\frac{1}{\epsilon}D^3\sigma(q^{\epsilon}(t)).\bigl(\eta_q^{\epsilon,{\bf h}^1}(t),\eta_q^{\epsilon,{\bf h}^2}(t),\eta_q^{\epsilon,{\bf h}^3}(t)\bigr)d\beta(t)
\end{aligned}
\right.
\]
with initial value $\xi^{\epsilon,{\bf h}^1,{\bf h}^2,{\bf h}^3}(0)=0$, equivalently $\zeta_q^{\epsilon,{\bf h}^1,{\bf h}^2,{\bf h}^3}(0)=\zeta_p^{\epsilon,{\bf h}^1,{\bf h}^2,{\bf h}^3}(0)=0$.

The inequality~\eqref{eq:K3} is a straightforward consequence of the inequality~\eqref{eq:K1} and of the following claim: for all $m\in\N$, there exists $C_m(T)\in(0,\infty)$ such that
\begin{equation}\label{eq:claimK3}
\underset{(t,q,p)\in[0,T]\times\R^{2d}}\sup~\E_{q,p}[|\xi_q^{\epsilon,{\bf h}^1,{\bf h}^2,{\bf h}^3}(t)|^{2m}]\le C_m(T)\bigl(|h_q^1|^{2m}+\epsilon^{2m}|h_p^1|^{2m}\bigr)\bigl(|h_q^2|^{2m}+\epsilon^{2m}|h_p^2|^{2m}\bigr)\bigl(|h_q^3|^{2m}+\epsilon^{2m}|h_p^3|^{2m}\bigr).
\end{equation}
Indeed, it suffices to apply the inequality~\eqref{eq:claimK1} with $m=2$ combined with the Cauchy--Schwarz inequality, the inequality~\eqref{eq:claimK2} with $m=1$ and the inequality~\eqref{eq:claimK3} with $m=1$, and to set either ${\bf h}^1=(h^1,0)$, ${\bf h}^2=(0,h^2)$, ${\bf h}^3=(0,h^3)$ or ${\bf h}^1=(0,h^1)$, ${\bf h}^2=(0,h^2)$, ${\bf h}^3=(0,h^3)$. It thus remains to prove the claim~\eqref{eq:claimK2}.

First, observe that, for all $t\ge 0$, one has
\begin{align*}
&\xi_p^{\epsilon,{\bf h}^1,{\bf h}^2,{\bf h}^3}(t)=\frac{1}{\epsilon}\int_{0}^{t}e^{-\frac{t-s}{\epsilon^2}}Df(q^\epsilon(s)).\xi_q^{\epsilon,{\bf h}^1,{\bf h}^2,{\bf h}^3}(s)ds+\frac{1}{\epsilon}\int_{0}^{t}e^{-\frac{t-s}{\epsilon^2}}D\sigma(q^\epsilon(s)).\xi_q^{\epsilon,{\bf h}^1,{\bf h}^2,{\bf h}^3}(s)d\beta(s)\\
&+\frac{1}{\epsilon}\int_{0}^{t}e^{-\frac{t-s}{\epsilon^2}}D^2f(q^\epsilon(s)).(\eta_q^{\epsilon,{\bf h}^1}(s),\zeta_q^{\epsilon,{\bf h}^2,{\bf h}^3}(s))ds+\frac{1}{\epsilon}\int_{0}^{t}e^{-\frac{t-s}{\epsilon^2}}D^2\sigma(q^\epsilon(s)).(\eta_q^{\epsilon,{\bf h}^1}(s),\zeta_q^{\epsilon,{\bf h}^2,{\bf h}^3}(s))d\beta(s)\\
&+\frac{1}{\epsilon}\int_{0}^{t}e^{-\frac{t-s}{\epsilon^2}}D^2f(q^\epsilon(s)).(\eta_q^{\epsilon,{\bf h}^2}(s),\eta_q^{\epsilon,{\bf h}^3,{\bf h}^1}(s))ds+\frac{1}{\epsilon}\int_{0}^{t}e^{-\frac{t-s}{\epsilon^2}}D^2\sigma(q^\epsilon(s)).(\eta_q^{\epsilon,{\bf h}^2}(s),\zeta_q^{\epsilon,{\bf h}^3,{\bf h}^1}(s))d\beta(s)\\
&+\frac{1}{\epsilon}\int_{0}^{t}e^{-\frac{t-s}{\epsilon^2}}D^2f(q^\epsilon(s)).(\eta_q^{\epsilon,{\bf h}^3}(s),\eta_q^{\epsilon,{\bf h}^1,{bf h}^2}(s))ds+\frac{1}{\epsilon}\int_{0}^{t}e^{-\frac{t-s}{\epsilon^2}}D^2\sigma(q^\epsilon(s)).(\eta_q^{\epsilon,{\bf h}^3}(s),\eta_q^{\epsilon,{\bf h}^1,{\bf h}^2}(s))d\beta(s)\\
&+\frac{1}{\epsilon}\int_{0}^{t}e^{-\frac{t-s}{\epsilon^2}}D^3f(q^\epsilon(s)).(\eta_q^{\epsilon,{\bf h}^1}(s),\eta_q^{\epsilon,{\bf h}^2}(s),\eta_q^{\epsilon,{\bf h}^3}(s))ds\\
&+\frac{1}{\epsilon}\int_{0}^{t}e^{-\frac{t-s}{\epsilon^2}}D^3\sigma(q^\epsilon(s)).(\eta_q^{\epsilon,{\bf h}^1}(s),\eta_q^{\epsilon,{\bf h}^2}(s),\eta_q^{\epsilon,{\bf h}^3}(s))d\beta(s).
\end{align*}
In addition, for all $t\ge 0$, one has
\begin{align*}
\xi_q^{\epsilon,{\bf h}^1,{\bf h}^2,{\bf h}^3}(t)=\frac{1}{\epsilon}\int_{0}^{t}&\xi_p^{\epsilon,{\bf h}^1,{\bf h}^2,{\bf h}^3}(s)ds=-\epsilon \xi_p^{\epsilon,{\bf h}^1,{\bf h}^2,{\bf h}^3}(t)\\
&+\int_{0}^{t}Df(q^\epsilon(s)).\xi_q^{\epsilon,{\bf h}^1,{\bf h}^2,{\bf h}^3}(s)ds+\int_{0}^{t}D\sigma(q^\epsilon(s)).\xi_q^{\epsilon,{\bf h}^1,{\bf h}^2,{\bf h}^3}(s)d\beta(s)\\
&+\int_{0}^{t}D^2f(q^\epsilon(s)).(\eta_q^{\epsilon,{\bf h}^1}(s),\zeta_q^{\epsilon,{\bf h}^2,{\bf h}^3}(s))ds+\int_{0}^{t}D^2\sigma(q^\epsilon(s)).(\eta_q^{\epsilon,{\bf h}^1}(s),\zeta_q^{\epsilon,{\bf h}^2,{\bf h}^3}(s))d\beta(s)\\
&+\int_{0}^{t}D^2f(q^\epsilon(s)).(\eta_q^{\epsilon,{\bf h}^2}(s),\eta_q^{\epsilon,{\bf h}^3,{\bf h}^1}(s))ds+\int_{0}^{t}D^2\sigma(q^\epsilon(s)).(\eta_q^{\epsilon,{\bf h}^2}(s),\zeta_q^{\epsilon,{\bf h}^3,{\bf h}^1}(s))d\beta(s)\\
&+\int_{0}^{t}D^2f(q^\epsilon(s)).(\eta_q^{\epsilon,{\bf h}^3}(s),\eta_q^{\epsilon,{\bf h}^1,{bf h}^2}(s))ds+\int_{0}^{t}D^2\sigma(q^\epsilon(s)).(\eta_q^{\epsilon,{\bf h}^3}(s),\eta_q^{\epsilon,{\bf h}^1,{\bf h}^2}(s))d\beta(s)\\
&+\int_{0}^{t}D^3f(q^\epsilon(s)).(\eta_q^{\epsilon,{\bf h}^1}(s),\eta_q^{\epsilon,{\bf h}^2}(s),\eta_q^{\epsilon,{\bf h}^3}(s))ds\\
&+\int_{0}^{t}D^3\sigma(q^\epsilon(s)).(\eta_q^{\epsilon,{\bf h}^1}(s),\eta_q^{\epsilon,{\bf h}^2}(s),\eta_q^{\epsilon,{\bf h}^3}(s))d\beta(s).
\end{align*}
On the one hand, using the H\"older and Burkholder--Davis--Gundy inequalities, one obtains the following upper bound for $\xi_p^{\epsilon,{\bf h}^1,{\bf h}^2,{\bf h}^3}(t)$: for all $t\in[0,T]$ one has
\begin{align*}
\E[&|\xi_p^{\epsilon,{\bf h}^1,{\bf h}^2,{\bf h}^3}(t)|^{2m}]\le \frac{C_m(T)}{\epsilon^{2m}}\int_{0}^{t}\E[|\xi_q^{\epsilon,{\bf h}^1,{\bf h}^2,{\bf h}^3}(s)|^{2m}]ds\\
&+\frac{C_m(T)}{\epsilon^{2m}}\int_0^t\bigl(\E[|\eta_q^{\epsilon,{\bf h}^1}(s)|^{2m}|\zeta_q^{\epsilon,{\bf h}^2,{\bf h}^3}(s)|^{2m}]+\E[|\eta_q^{\epsilon,{\bf h}^2}(s)|^{2m}|\zeta_q^{\epsilon,{\bf h}^3,{\bf h}^1}(s)|^{2m}]+\E[|\eta_q^{\epsilon,{\bf h}^3}(s)|^{2m}|\zeta_q^{\epsilon,{\bf h}^1,{\bf h}^2}(s)|^{2m}]\bigr)ds\\
&+\frac{C_m(T)}{\epsilon^{2m}}\int_0^t\E[|\eta_q^{\epsilon,{\bf h}^1}(s)|^{2m}|\eta_q^{\epsilon,{\bf h}^2}(s)|^{2m}|\eta_q^{\epsilon,{\bf h}^3}(s)|^{2m}]ds\\
&\le \frac{C_m(T)}{\epsilon^{2m}}\Bigl(\int_{0}^{t}\E[|\xi_q^{\epsilon,{\bf h}^1,{\bf h}^2,{\bf h}^3}(s)|^{2m}]ds+\bigl(|h_q^1|^{2m}+\epsilon^{2m}|h_p^1|^{2m}\bigr)\bigl(|h_q^2|^{2m}+\epsilon^{2m}|h_p^2|^{2m}\bigr)\bigl(|h_q^3|^{2m}+\epsilon^{2m}|h_p^3|^{2m}\bigr)\Bigr),
\end{align*}
where the inequalities~\eqref{eq:claimK1} and~\eqref{eq:claimK2} are used in the second step above.

On the other hand, using the H\"older and Burkholder--Davis--Gundy inequalities, one obtains the following upper bound for $\xi_q^{\epsilon,{\bf h}^1,{\bf h}^2,{\bf h}^3}(t)$: for all $t\in[0,T]$ one has
\begin{align*}
\E[|\xi_q^{\epsilon,{\bf h}^1,{\bf h}^2,{\bf h}^3}(t)|^{2m}]
&\le C_m(T)\epsilon^{2m}\E[|\xi_p^{\epsilon,{\bf h}^1,{\bf h}^2,{\bf h}^3}(t)|^{2m}]\\
&+C_m(T)\int_{0}^{t}\E[|\xi_q^{\epsilon,{\bf h}^1,{\bf h}^2,{\bf h}^3}(s)|^{2m}]ds\\
&+C_m(T)\int_0^t\E[|\eta_q^{\epsilon,{\bf h}^1}(s)|^{2m}|\zeta_q^{\epsilon,{\bf h}^2,{\bf h}^3}(s)|^{2m}]ds\\
&+C_m(T)\int_0^t\E[|\eta_q^{\epsilon,{\bf h}^2}(s)|^{2m}|\zeta_q^{\epsilon,{\bf h}^3,{\bf h}^1}(s)|^{2m}]ds\\
&+C_m(T)\int_0^t\E[|\eta_q^{\epsilon,{\bf h}^3}(s)|^{2m}|\zeta_q^{\epsilon,{\bf h}^1,{\bf h}^2}(s)|^{2m}]ds\\
&+C_m(T)\int_0^t\E[|\eta_q^{\epsilon,{\bf h}^1}(s)|^{2m}|\eta_q^{\epsilon,{\bf h}^2}(s)|^{2m}|\eta_q^{\epsilon,{\bf h}^3}(s)|^{2m}]ds\\
&\le C_m(T)\epsilon^{2m}\E[|\xi_p^{\epsilon,{\bf h}^1,{\bf h}^2,{\bf h}^3}(t)|^{2m}]\\
&+C_m(T)\int_{0}^{t}\E[|\xi_q^{\epsilon,{\bf h}^1,{\bf h}^2,{\bf h}^3}(s)|^{2m}]ds\\
&+\bigl(|h_q^1|^{2m}+\epsilon^{2m}|h_p^1|^{2m}\bigr)\bigl(|h_q^2|^{2m}+\epsilon^{2m}|h_p^2|^{2m}\bigr)\bigl(|h_q^3|^{2m}+\epsilon^{2m}|h_p^3|^{2m}\bigr)\Bigr),
\end{align*}
where the inequalities~\eqref{eq:claimK1} and~\eqref{eq:claimK2} are used in the second step above.

Combining the two upper bounds gives the inequality
\begin{align*}
\E[|\xi_q^{\epsilon,{\bf h}^1,{\bf h}^2,{\bf h}^3}(t)|^{2m}]&\le C_m(T)\int_{0}^{t}\E[|\xi_q^{\epsilon,{\bf h}^1,{\bf h}^2,{\bf h}^3}(s)|^{2m}]ds\\
&+C_m(T)\Bigl(\bigl(|h_q^1|^{2m}+\epsilon^{2m}|h_p^1|^{2m}\bigr)\bigl(|h_q^2|^{2m}+\epsilon^{2m}|h_p^2|^{2m}\bigr)\bigl(|h_q^3|^{2m}+\epsilon^{2m}|h_p^3|^{2m}\bigr)\Bigr)
\end{align*}
for all $t\in[0,T]$. Applying Gronwall's inequality then yields the claim~\eqref{eq:claimK3}. This concludes the proof of the inequality~\eqref{eq:K3}.

\end{proof}

\section*{Acknowledgements}
This work is partially supported by the projects ADA (ANR-19-CE40-0019-02) and SIMALIN (ANR-19-CE40-0016) operated by the French National Research Agency.


\end{document}